\newtheorem{theorem}{Theorem}[section]
\newtheorem{corollary}[theorem]{Corollary}
\newtheorem{lemma}[theorem]{Lemma}
\newtheorem{proposition}[theorem]{Proposition}
\theoremstyle{definition}
\newtheorem{definition}[theorem]{Definition}
\newtheorem{remark}{Remark}
\newtheorem{hypothesis}{Hypothesis}
\DeclareMathOperator\diam{diam}
\DeclareMathOperator\supp{supp}
\DeclareMathOperator*{\esssup}{ess\,sup}
\DeclareMathOperator*{\essinf}{ess{\,}inf}
\def\N{\mathbb{N}}
\def\Z{\mathbb{Z}}
\def\R{\mathbb{R}}
\let\O=\Omega
\let\e=\varepsilon
\let\t=\tilde
\let\ol=\overline
\let\ul=\underline
\let\mc=\mathcal
\def\1{\mathbbm{1}}
\def\Sph{S^{N-1}}
\newcommand{\su}[2]{\genfrac{}{}{0pt}{}{#1}{#2}}
\def\seq#1{(#1_n)_{n\in\N}}
\def\limn{\lim_{n\to\infty}}
\newenvironment{formula}[1]{\begin{equation}\label{#1}}
                       {\end{equation}\noindent}
\def\Fi#1{\begin{formula}{#1}}
\def\Ff{\end{formula}\noindent}
\title{\bf Blocking and invasion for reaction-diffusion equations in periodic media}
\author[*]{Romain {\sc Ducasse}}
\author[*]{Luca {\sc Rossi}}
\affil[*]{Ecole des Hautes Etudes en Sciences Sociales, PSL Research University,  
		Centre d'Analyse et Math\'ematiques Sociales, 54 boulevard Raspail 
		75006 Paris, France}
\date{}
\begin{document}

\maketitle

%-------------------------------------------------------------------------------

\noindent {\textbf{Keywords:} }  reaction-diffusion equations, invasion, propagation, spreading, domains with holes.
\\
\noindent {\textbf{MSC:} } 35A08, 35B30, 35K05, 35K57, 35B40

\begin{abstract}
We investigate the large time behavior of solutions of 
reaction-diffusion equations 
with general reaction terms in periodic media. 
We first derive some conditions which guarantee that solutions with
compactly supported initial data invade the domain.
In particular, we relate such solutions with front-like solutions
such as pulsating traveling fronts. Next, 
we focus on the homogeneous equation set in a domain with periodic holes, 
and specifically in the cases where
fronts are not known to exist. We show how the geometry of the domain can block or allow invasion. We finally exhibit a periodic domain on which the propagation 
takes place in an asymmetric fashion, in the sense that the invasion 
occurs in a direction but is blocked in the opposite one.
\end{abstract}

\section{Introduction}

\subsection{Large time behavior for the Cauchy problem}\label{intro}

Reaction-diffusion equations classically arise in the study of biological phenomena 
(propagation of genes, epidemics), 
in physics (combustion) and more recently in social sciences (rioting models).
They have been extensively studied since the seminal papers of Fisher \cite{Fisher} and
Kolmogorov, Petrovski and Piskunov \cite{KPP} 
who dealt with the {\em homogeneous} equation
\begin{equation}\label{FKPP}
\partial_t u=\Delta u+f(u), \quad  t>0,  \ x\in\mathbb{R}^{N}.
\end{equation}
A crucial progress in the study of \eqref{FKPP} is due to Aronson and Weinberger \cite{AW}.
The basic assumption there is that $f(0)=f(1)=0$. 
Then the authors consider three different sets of hypotheses.
With the terminology commonly employed in the literature, they are:
\begin{labeling}{\quad monostable\quad}
  \item[\quad {\em monostable}\quad] 
  $f>0\quad\text{in }(0,1)$;
  
  \item[\quad {\em combustion}] 
  $\exists\theta\in(0,1),\quad 
f=0\quad\text{in }[0,\theta],\quad f>0\quad\text{in }(\theta,1)$;

  \item[\quad {\em bistable}]  
  $\exists\theta\in(0,1),\quad 
f<0\quad\text{in }(0,\theta),\quad f>0\quad\text{in }(\theta,1)$.
\end{labeling}

The monostable case is the one considered in \cite{Fisher}, \cite{KPP} and includes the logistic equation $f(u) = u(1-u)$; the prototype of the bistable term is 
$f(u) = u(1-u)(u-\theta)$, which reduces to the Allen-Cahn nonlinearity when $\theta=1/2$.

Two key features of the equation \eqref{FKPP} are exhibited in \cite{KPP,AW}. First, 
the existence of a special type of solutions named \emph{traveling fronts} (or \emph{traveling waves}). 
These are solutions of the form $u(t,x) = \phi(x\cdot e -c t)$, with $e$ on the unit sphere
$\mathbb{S}^{N-1}$, 
$c \in \mathbb{R}$ and $\phi(z) \to 1$ as $z$ goes to $-\infty$ and $\phi(z)\to 0$ as $z$ goes to $+\infty$.
The second feature is the \emph{invasion} property:
for any ``large enough'' compactly supported non-negative initial datum, the solution of
\eqref{FKPP} converges locally uniformly to $1$ as time goes to infinity. 
This result requires the hypothesis $\int_0^1 f>0$, which is automatically fulfilled
in the monostable and combustion cases.
How large the initial datum needs to be depends on the type of nonlinearity: 
in the monostable case it is sufficient to be larger than a positive constant on a large ball,
in the combustion or bistable case the constant needs to be larger than $\theta$.
Actually, in the monostable case, there exists a critical exponent $\beta>1$
such that if $f(u)\geq u^\beta$ for $u\sim0$ (hence
in particular if $f'(0)>0$) then all solutions with non-negative, not identically equal to $0$
initial data converge to $1$ as $t$ goes to $+\infty$. 
This is known as the \emph{hair-trigger effect}, c.f.~\cite{AW}.
Observe instead that in the combustion case, if the initial datum lies below $\theta$,
then \eqref{FKPP} reduces to the heat equation and thus
the solution converges uniformly to $0$ as $t$ goes to $+\infty$. 
In the bistable case the situation is even worse; moreover if 
$\int_{0}^{1}f=0$ then all solutions with compactly supported initial 
data $\leq1$ stay bounded away from $1$, and they all 
converge to $0$ if $\int_{0}^{1}f<0$.

The aim of this paper is to investigate the question of invasion 
for reaction-diffusion equations in periodic media.
Specifically, we consider the problem
\Fi{evol}
\begin{cases}
	\partial_t u=\nabla\cdot (A(x) \nabla u)+q(x)\cdot \nabla u+f(x,u), & t>0,\ x\in\O\\
	\nu \cdot A(x)\nabla u=0, & t>0,\ x\in\partial\O.
\end{cases}
\Ff
combined with a nonnegative initial datum $u_0$ 
\footnote{ Initial data and solutions are always understood to be bounded in order to avoid non-uniqueness issues.}.
Here $\Omega\subset\mathbb{R}^{N}$ is an unbounded smooth domain which is periodic in 
the directions of the canonical basis. Typical examples are domains with ``holes" which are 
periodically arranged, that is,
$\Omega = \left( K + L\mathbb{Z}^{N} \right)^{c}$
(here and in the sequel, for a given subset $A$ of $\R^N$, we denote
its complement by $A^{c}:=\R^N\setminus A$)
where $K\subset \mathbb{R}^{N}$ is a compact set and $L>0$.
In the case $\O=\R^N$, we neglect the second equation in \eqref{evol}.
The diffusion matrix $A$, the drift term $q$ and the nonlinearity $f$ are also assumed to be 
periodic with respect to the $x$ variable, with the same period
as $\O$.

We shall describe the large time behavior of solutions in terms of the following properties.

	\begin{definition}\label{def}
	We say that a solution $u$ satisfies the properties of
	\begin{labeling}{\quad persistence \ }
		\item[\quad\em blocking ] if \ $\displaystyle \limsup_{t \to +\infty} 
		\left(\sup_{x \in \O} u(t,x) \right)<1 $;
		\item[\quad\em persistence] if \ $\displaystyle\liminf_{t\to+\infty}\Big(\min_{x\in K}u(t,x)\Big)>0$ for all compact $K\subset\ol\O$;
		\item[\quad\em invasion] if  \ $u(t,x)\to1$ as $t$ goes to $+\infty$, 
		locally uniformly in $x\in\ol\O$.
	\end{labeling}
	\end{definition}
Persistence is compatible with both blocking and invasion, whereas the latter two properties are mutually exclusive.

We shall say that a function $u : \O \mapsto [0,1]$ is \emph{front-like}
if
\begin{equation}\label{def f l}
\exists e\in \mathbb{S}^{N-1},\quad
\lim_{\su{x\cdot e \to -\infty}{x\in\O}}u(x) = 1,\quad
\lim_{\su{x\cdot e \to +\infty}{x\in\O}}u(x) = 0.
\end{equation}
The kind of questions we address here are:
\begin{itemize}

\item Does the validity of the invasion property for {\em all}\, front-like data 
implies that the invasion property holds for {\em some}\, compactly supported data ?

\item Does the validity of the invasion property for a {\em single}\, front-like datum 
%converging to $0$ as $x\cdot e\to+\infty$
implies that the invasion property holds for {\em some}\, compactly supported data ?

\end{itemize}

We will show in Theorem \ref{invasion general} that the answer to the first question is yes, specifying also how ``large" the compactly supported data need to be. 
Then we will provide a negative answer to the second question, see Remark \ref{rem:single} after Theorem \ref{th oriented}.

\subsection{Statement of the main results}\label{sec:results}

Throughout the paper, 
$\O$ is a domain with $C^{3}$ 
boundary, $\nu:\partial\O\to \mathbb{S}^{N-1}$ denotes its exterior normal field and $\partial_\nu$
the associated directional derivative. We further assume $\O$ to be 
periodic, i.e., there are $L_1,\dots,L_N>0$ such that
%$$\O+\Z^N=\O.$$
$$
\forall k \in L_1\Z\times\cdots\times L_N\Z,\quad
\Omega +\{ k\} = \Omega.
$$
We let $\mc{C}$ denote the periodicity cell: 
$$\mathcal{C} := \ol\O\cap \prod_{i = 1}^{N}[0,L_{i}).$$
We assume, unless otherwise stated, that the coefficients are also periodic with respect to $x$ with the same periodicity as $\O$, i.e., 
\[
\forall k \in L_1\Z\times\cdots\times L_N\Z,\ x\in\ol\O,\quad
A(x+k)=A(x), \quad q(x+k) = q(x),
\]
\[
\forall k \in L_1\Z\times\cdots\times L_N\Z,\ x\in\ol\O,\ s\in[0,1],\quad
f(x+k,s)=f(x,s).
\]
%Throughout the paper, unless stated otherwise, when we say that a domain or a function is 
%periodic, we understand that it is periodic in each direction of the canonical basis with period $1$, i.e., that the above properties hold. 
The following regularity and ellipticity hypotheses will
always be understood in the paper: $A \in C^{3}(\overline{\Omega})$ is a symmetric, 
uniformly elliptic matrix field, $q \in C^{1,\alpha}(\overline{\Omega})$, for some $\alpha \in (0,1)$ is a vector field and $f : \ol\Omega \times [0,1] \mapsto \mathbb{R}$ is globally Lipschitz-continuous. We shall denote $0<\lambda\leq \Lambda$ the ellipticity constants of the matrix field $A$, so that
\begin{equation}\label{A}
\forall x \in \ol\O, \quad   \lambda I_{N} \leq A(x) \leq \Lambda I_{N},
\end{equation}
where $I_{N}$ is the identity matrix and the order is the usual one on symmetric matrices.
We shall further assume that $f$ satisfies the following properties:
$$\forall x \in \ol\Omega, \quad f(x,0) = f(x,1) =0, $$
$$\exists S \in (0,1) \ \text{such that} \ s\mapsto f(x,s)  \text{ is strictly positive for all}\ x \in \ol\O,\ s\in (S,1).$$
%
%
%\begin{equation}\label{}
%\left\{
%\begin{array}{llc}
%\forall x \in \ol\Omega, \ f(x,0) = f(x,1) =0 \\
%\exists S \in (0,1), \  \forall  x \in \ol\Omega, \ s\mapsto f(x,s)  \text{ is strictly decreasing for } s\in (S,1). \\
%\end{array}
%\right.
%\end{equation}{est ce que positive and nonincreasing ne suffit pas ?}
This means that $0$ and $1$ are steady states for \eqref{evol}, with $1$ being stable whereas $0$ could be both
stable or unstable. 
We extend $f(x,s)$ by
a negative function for $s\not\in [0,1]$ such that the resulting function is globally Lipschitz-continuous.
The above properties allow us to define
\begin{equation}\label{deftheta}
\theta := \max \left\{ s \in [0,1)  \ :\  \exists x\in\ol{\O},\ 
	f(x,s)=0    \right\}.
\end{equation}
Namely, $\theta\in[0,1)$ is the smallest value for which $f>0$ in $\ol{\O}\times(\theta,1)$.

This paper deals with the phenomena of invasion, persistence and blocking - formulated in
 Definition \ref{def} -
 for equation \eqref{evol}, and is divided into two main parts. In the first part, we give sufficient conditions for persistence and invasion to occur for ``large enough" compactly supported initial data. 
 In the second part, we study how the geometry of the domain $\Omega$ can influence the invasion in the bistable case.

\subsubsection{Persistence and invasion}\label{invasion results}

Our first result provides a sufficient
condition for the persistence of solutions to \eqref{evol} with ``large enough" initial data, 
in the absence of the drift term~$q$.

\begin{theorem}\label{th persistence}
Assume that $q\equiv0$ and that $f$ satisfies 
%$$F(x,s) := \int_{0}^{s}f(x,u)\,du,$$
%
\begin{equation}\label{mean positive}
 \int_{\mathcal{C}}\int_{0}^{1}f(x,s)\,ds\, dx>0.
\end{equation}
Then, for all $\eta\in(\theta,1)$, where $\theta$ is defined by \eqref{deftheta}, there is $r>0$ such that the persistence property holds for every solution to \eqref{evol} arising from a
non-negative initial datum  satisfying
	$$u_0>\eta\quad \text{ in }\ \O\cap B_r.$$
	%$$
%\liminf_{t\to+\infty}\Big(\min_{x\in K}u(t,x)\Big)>0 \text{ for all compact } K\subset\ol\O.
%$$
\end{theorem}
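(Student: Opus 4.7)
The plan is to build a compactly supported stationary sub-solution for \eqref{evol} dominated by $u_0$, and then use parabolic comparison together with the strong maximum principle to get persistence on every compact set. The gradient-flow structure of \eqref{evol} (available since $q\equiv 0$) makes the variational construction natural. With $F(x,s):=\int_0^s f(x,\tau)\,d\tau$ and
$$E(v):=\int_{\O}\Big(\tfrac12 A(x)\nabla v\cdot\nabla v-F(x,v)\Big)\,dx,$$
the hypothesis \eqref{mean positive} reads $\int_{\mathcal{C}}F(x,1)\,dx>0$.

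\textbf{Step 1 (variational sub-solution).} Choose $\eta_*\in(\theta,1)$ close enough to $1$ so that $\int_{\mathcal{C}}F(x,\eta_*)\,dx>0$; this is possible by continuity in $\eta$. Minimize $E$ over $\{v\in H^1(B_R\cap\O)\,:\,v=0\text{ on }\partial B_R\cap\O,\ 0\leq v\leq\eta_*\}$. For $R$ large, the test function equal to $\eta_*$ on $B_{R-1}$ and interpolated linearly to $0$ on $B_R\setminus B_{R-1}$ has energy $O(R^{N-1})-cR^N<0$ (the volume term uses periodicity of $F$ over $\mathcal{C}$), so the minimizer $w_R$ is non-trivial. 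Its constrained Euler--Lagrange inequalities yield the weak sub-solution relation $-\nabla\cdot(A\nabla w_R)\leq f(x,w_R)$ on $B_R\cap\O$ together with the Neumann condition on $\partial\O\cap B_R$: equality on $\{0<w_R<\eta_*\}$, while on the active set $\{w_R=\eta_*\}$ it follows from $f(x,\eta_*)>0$ (since $\eta_*>\theta$). Extending $w_R$ by $0$ outside $B_R$ preserves the sub-solution property, as $w_R\geq 0$ and $w_R=0$ on $\partial B_R$ make the jump of the outward normal flux non-positive.

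\textbf{Step 2 (amplification).} Step 1 requires $\int_\mathcal{C}F(x,\eta_*)\,dx>0$, which may fail when $\eta$ is close to $\theta$, so for a general $\eta\in(\theta,1)$ one first proves: if $r$ is large enough, then $u(t_0,\cdot)>\eta_*$ on $\O\cap B_{r/2}$ for some $t_0>0$. Let $\bar u$ solve \eqref{evol} on $\O$ with the constant initial datum $\eta$. By uniqueness $\bar u$ is spatially periodic; by $f(x,\eta)>0$ the constant $\eta$ is a strict sub-solution, hence $\bar u$ is non-decreasing in $t$. Its limit $\bar u_\infty$ is a periodic stationary solution with $\bar u_\infty\geq\eta>\theta$; integrating the stationary equation on $\mathcal{C}$ against $1$ and using periodicity together with the Neumann condition yields $\int_\mathcal{C}f(x,\bar u_\infty)\,dx=0$, which combined with $f\geq 0$ on $[\eta,1]$ forces $\bar u_\infty\equiv 1$. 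Dini's theorem then gives $\bar u(t_0,\cdot)>\eta_*+\delta$ uniformly for some $t_0,\delta>0$. The difference $u-\bar u$ solves a linear parabolic equation with bounded coefficients, and the negative part of $u_0-\eta$, being supported outside $B_r$, contributes at most $Ce^{-cr^2/t_0}$ on $B_{r/2}$ by standard Gaussian heat-kernel bounds; for $r$ large this is $<\delta$, giving the claim.

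\textbf{Step 3 (comparison and conclusion).} Choose $r$ large enough that $B_R\subset B_{r/2}$. Then $u(t_0,\cdot)>\eta_*\geq w_R$ on $B_R$ and $u(t_0,\cdot)\geq 0=w_R$ elsewhere, so parabolic comparison gives $u(t,\cdot)\geq w_R$ for all $t\geq t_0$. For persistence on an arbitrary compact $K\subset\overline\O$, argue by contradiction: if $u(t_n,x_n)\to 0$ with $t_n\to+\infty$ and $x_n\to x_*\in K$, then parabolic Schauder estimates let us extract from $u(t_n+\cdot,\cdot)$ a limit that is an entire orbit $U\geq w_R$ of \eqref{evol} on $\R\times\O$ with $U(0,x_*)=0$. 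Since $w_R>0$ inside $B_R$, $U$ is positive at some $(-1,y_0)$ with $y_0\in\operatorname{int}B_R$; the parabolic strong maximum principle then forces $U>0$ on $(-1,+\infty)\times\O$, contradicting $U(0,x_*)=0$. Hence persistence holds.

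The main obstacle is Step 2: one cannot read off a compactly supported sub-solution at threshold $\eta$ directly from \eqref{mean positive}, because the cell average $\int_\mathcal{C}F(x,\eta)\,dx$ may be negative; the parabolic dynamics must first be used to push $u$ above a higher threshold $\eta_*$, and the quantitative heat-kernel estimate isolating the effect of the compactly supported initial datum is the key ingredient linking the amplification to the variational construction.
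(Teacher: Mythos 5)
Your proof is correct and follows the paper's overall strategy: variationally constructing a compactly supported stationary subsolution of \eqref{stationary} on a truncated domain, and then using parabolic comparison together with an ``amplification'' step to reach the general threshold $\eta>\theta$. The technical implementation differs in several places. In Step~1 you minimise under the bilateral constraint $0\leq v\leq\eta_*$ and derive the subsolution property from the one-sided variational inequality (using $f(\cdot,\eta_*)>0$ on the upper contact set, and the fact that extending a nonnegative function by $0$ adds only nonnegative singular mass to $\nabla\cdot(A\nabla\cdot)$); the paper instead minimises freely over $H^1_0(\overline\O\cap B_r)$, relying on the negative extension of $f$ outside $[0,1]$ to truncate a minimising sequence between $0$ and $1$. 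Your route avoids one of the truncation verifications. In Step~2 your amplification is quantitative: you compare $u$ to the spatially constant solution $\bar u$, write the difference as a linear parabolic problem, and invoke Aronson-type Gaussian upper bounds for the Neumann heat kernel on $\O$ to show the contribution from outside $B_r$ is exponentially small on $B_{r/2}$ at a fixed time $t_0$. The paper instead argues by compactness, exhausting the constant datum $\eta$ by truncations $u_0^n$ and using interior parabolic estimates to show local uniform convergence of the corresponding solutions; it also uses an ODE comparison $\dot z=\min_x f(x,z)$ rather than your cell-integration argument to see that the constant-datum solution tends to $1$. Your version yields an explicit bound on $r$, at the cost of requiring the Gaussian kernel estimate; the paper's is softer. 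In Step~3 you close with an entire-orbit contradiction, while the paper proceeds directly, noting that the solution from the subsolution initial datum increases in $t$ to a strictly positive stationary solution and then compares. Both are valid. Overall: same key lemma (nontriviality of the energy minimiser for large $r$ under \eqref{mean positive}), same structure, with a different and somewhat more quantitative execution of the amplification step.
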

Let us make one comment about the condition $q\equiv0$. 
It comes from the fact that our proof of Theorem \ref{th persistence} relies on 
an energy method. This condition is presumably not optimal, but it guarantees some control on the drift: indeed, a drift that is ``too strong" would lead to extinction for all compactly supported initial data, no matter how large they are.

%%% Second main result

Our second main result 
is the equivalence of the invasion property for initial data which are ``large enough
on a large set'' and for front-like initial data, i.e., satisfying~\eqref{def f l}. 

\begin{theorem}\label{invasion general}
The following properties are equivalent:
\begin{enumerate}[$(i)$]
\item Invasion occurs for all solutions of \eqref{evol} with non-negative front-like initial data.

\item Invasion occurs for ``large enough" initial data, in the sense that for all $\eta\in(\theta,1)$, where $\theta$ is defined by \eqref{deftheta}, there is $r>0$ such that the invasion property holds for every solution to \eqref{evol} arising from a
non-negative initial datum  satisfying
	$$u_0>\eta\quad \text{ in }\ \O\cap B_r.$$

\end{enumerate}
\end{theorem}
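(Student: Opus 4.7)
\smallskip\noindent\emph{Plan.}
The direction $(ii)\Rightarrow(i)$ follows from a comparison argument that exploits the periodicity of the equation. Given a non-negative front-like initial datum $u_{0}$ with direction $e\in\Sph$, fix $\eta\in(\theta,1)$ and let $r>0$ be the radius provided by $(ii)$. Because $u_{0}(x)\to 1$ as $x\.e\to-\infty$ in $\O$, one can pick a vector $k$ in the periodicity lattice $L_{1}\Z\times\cdots\times L_{N}\Z$ so that $u_{0}(\cdot+k)>\eta$ on $\O\cap B_{r}$. Applying $(ii)$ to this translate yields invasion of the corresponding solution, which, by periodicity of the coefficients, coincides with $u(\cdot,\cdot+k)$; invasion of $u$ follows.

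For $(i)\Rightarrow(ii)$ I argue by contradiction: assume $(i)$ while $(ii)$ fails for some $\eta\in(\theta,1)$. Then there exist $r_{n}\to\infty$ and solutions $u^{n}$ starting from initial data $u^{n}_{0}$ that lie just below $\eta\,\1_{\O\cap B_{r_{n}}}$ and are chosen so as to be strict subsolutions of the stationary equation (possible since $f(x,\eta)>0$ for $\eta>\theta$), and such that none of the $u^{n}$ invades. The subsolution property makes $t\mapsto u^{n}(t,\cdot)$ nondecreasing, hence it converges locally uniformly to a steady state $U^{n}\leq 1$ with $U^{n}\not\equiv 1$. At the same time, $(u^{n})$ is nondecreasing in $n$ and, by parabolic estimates, converges locally uniformly in $(t,x)$ to the solution $u^{\infty}$ of~\eqref{evol} with the constant initial datum $\eta$. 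Comparing $u^{\infty}$ with the solution of the ODE $\dot w=\min_{\ol\O}f(\cdot,w)$, whose right-hand side is positive on $(\theta,1)$, yields $u^{\infty}(t,\cdot)\to 1$ uniformly on $\ol\O$, and Dini's theorem applied to the monotonicity in $n$ then gives $U^{n}\to 1$ locally uniformly as $n\to\infty$.

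For each $\alpha\in(\theta,1)$ close to $1$, pick a point $y_{n}^{\alpha}\in\ol\O$ satisfying $U^{n}(y_{n}^{\alpha})=\alpha$ with $|y_{n}^{\alpha}|$ minimal; the convergence $U^{n}\to 1$ on compacts forces $|y_{n}^{\alpha}|\to+\infty$. Translating by a lattice vector $k_{n}^{\alpha}$ closest to $y_{n}^{\alpha}$, the functions $V^{n,\alpha}(x):=U^{n}(x+k_{n}^{\alpha})$ satisfy the same elliptic equation; up to a subsequence also arranging $k_{n}^{\alpha}/|k_{n}^{\alpha}|\to e^{\alpha}\in\Sph$, elliptic regularity provides a locally uniform limit $V^{\alpha}$ with $V^{\alpha}(0)=\alpha$ and $V^{\alpha}\geq\alpha$ on $\{x\.e^{\alpha}\leq 0\}$. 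A further diagonal extraction over a sequence $\alpha_{k}\uparrow 1$, together with $U^{n}\to 1$ locally uniformly, upgrades this to a stationary solution $V$ satisfying $V(x)\to 1$ as $x\.e\to-\infty$; the compact support of each $u^{n}_{0}$ and the structure of $f$ near $0$ then yield $V(x)\to 0$ as $x\.e\to+\infty$. Thus $V$ is a non-trivial front-like steady state in the sense of~\eqref{def f l}, so the solution of~\eqref{evol} starting from $V$ is the time-constant profile $V$ itself, contradicting~$(i)$.

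The main obstacle is precisely this last construction: establishing that the limit $V$ is genuinely front-like as required by~\eqref{def f l}. The limit $V\to 1$ in direction $-e$ demands a delicate diagonal procedure coupling the thresholds $\alpha$, the radii $r_{n}$, and the translation vectors $k_{n}^{\alpha}$ so as to keep the resulting limits consistent. Obtaining $V\to 0$ in direction $+e$ is subtler still, since it requires a uniform-in-$n$ decay estimate on $U^{n}$ at spatial infinity; this is automatic in the bistable regime, where $0$ is a stable state of the ODE, but requires more care under the minimal hypotheses of the theorem.
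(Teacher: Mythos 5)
Your direction $(ii)\Rightarrow(i)$ is fine and matches the paper's one-line argument. The issues are all in $(i)\Rightarrow(ii)$, where you take a genuinely different route from the paper, and it has several gaps; at least one of them is serious enough to undermine the strategy.

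First, and most fundamentally, you want $t\mapsto u^n(t,\cdot)$ to be nondecreasing so that it converges to a steady state $U^n$, and for this you require the compactly supported data $u^n_0\le\eta$ to be generalized stationary subsolutions. The parenthetical justification ``possible since $f(x,\eta)>0$ for $\eta>\theta$'' is not enough: near the boundary of the support, $u^n_0$ must pass through values in $(0,\theta)$, where $f$ may be negative (bistable case) and where the second-order term cannot be controlled by $f$. In fact the existence of such nontrivial compactly supported subsolutions is exactly what is established in Section~\ref{sec:persistence}, but only under the extra hypothesis \eqref{mean positive}, which Theorem~\ref{invasion general} does not assume. Without subsolution data there is no monotone limit $U^n$, and the rest of your construction does not even start. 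The paper avoids this entirely: rather than passing to a $t\to\infty$ limit, it introduces the quantity $T_n$ (first time at which $u^n$ drops to $\eta'$ inside the diffusive ball $B_{\sqrt t}$) and takes a space--time translated limit $w_\infty$ of the flows themselves; this is an entire solution, not a steady state, and no monotonicity is needed.

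Second, even granting the existence of $U^n$ with $U^n\to1$ locally uniformly, your construction of a front-like limit has holes. The minimality of $|y_n^\alpha|$ does not imply $U^n\ge\alpha$ on $B_{|y_n^\alpha|}\cap\O$: since $\O\cap B_r$ may be disconnected, a point where $U^n<\alpha$ inside that ball need not be reachable from $0$ by a path that stays in the ball, so the intermediate-value argument fails. The half-space lower bound therefore requires a different choice of the contact point (the paper's Step 3 carries out the corresponding computation explicitly for $w_n$, using $|x_n|\le\sqrt{T_n}$). Third, the ``diagonal over $\alpha_k\uparrow1$'' to upgrade $V^\alpha\ge\alpha$ on a half-space into $V\to1$ as $x\.e\to-\infty$ does not work as described: a locally uniform limit of $V^{\alpha_k}$ with $V^{\alpha_k}(y^{\alpha_k})=\alpha_k\to1$ would attain the value $1$ and hence be $\equiv1$ by the strong maximum principle, which is not front-like. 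The correct step is to fix one $\alpha>\theta$ and invoke the uniform front-like behaviour from \cite[Lemma~1]{D}, which is precisely what the paper does to obtain \eqref{wfl}. Finally, the behaviour $V\to0$ as $x\.e\to+\infty$, which you rightly flag as the subtlest point, is never needed in the paper's proof: in Step~4 the entire solution $w_\infty$ is multiplied by a cut-off $\chi(x\.e)$ to manufacture a front-like datum lying below $w_\infty(-m,\cdot)$ for all $m>0$, and the contradiction with $(i)$ comes from $w_\infty(0,y)=\eta'$, not from any decay of $w_\infty$. You could import this cut-off trick into your argument to close that particular gap, but the earlier issues remain.
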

The fact that $(ii)$ implies $(i)$ in Theorem \ref{invasion general} is an immediate
consequence of the parabolic comparison principle. The interest is in the the other implication. 
We mention that a related result is contained in the work \cite{W} by Weinberger.
However, the hypotheses there are more restrictive than our condition $(i)$ in Theorem~\ref{invasion general}:
first, compared with our definition~\eqref{def f l}, front-like data in \cite{W} are assumed
to be strictly smaller than $1$, second, the emerging solutions
need to have a positive spreading speed rather than just satisfy the invasion property. 
%We mention that, up to some technicalities, our Theorem \ref{invasion general} allows to recover this result. 
The method of \cite{W} relies on a discrete dynamical system approach. In particular, the rather involved argument that allows the author to handle compactly supported initial data cannot be directly performed in the continuous PDE setting.

As already mentioned, it may be complicated to check that condition $(i)$ of Theorem \ref{invasion general} holds. Our next result -- Theorem \ref{inv ptf} --
provides a sufficient condition for the invasion property expressed in terms 
of \emph{pulsating traveling fronts}.  
%
%\begin{hypothesis}\label{H}
%Invasion occurs for every front-like initial datum.
%\end{hypothesis}
%
These are particular entire solutions of \eqref{evol} satisfying some structural properties that generalize to the periodic framework the notion of traveling front. They were first introduced in dimension $N=1$ by Shigesada, Kawasaki and Teramoto in \cite{SKT}.
\begin{definition}\label{def:ptf}
A {\em pulsating traveling front} 
in the direction $e \in \mathbb{S}^{N-1}$ with speed $c \in \mathbb{R}\backslash\{0 \}$ connecting $1$ to $0$ is an entire (i.e., for all times) solution of \eqref{evol} $0<v<1$ such that
	\Fi{ptf}
	\begin{cases}
		\forall z\in \displaystyle\prod_{i=1}^{N}L_{i}\Z,\ x\in\O,\quad v(t+\frac{z\.e}{c},x)=v(t,x-z)\\
		v(t,x)\underset{x\cdot e \to -\infty}{\longrightarrow} 1 \ \text{ and } \
		v(t,x)\underset{x\cdot e \to +\infty}{\longrightarrow}  0 .
	%		v(t,x)\underset{x\cdot e \to -\infty}{\longrightarrow} 1 \ \text{ as }x\.e\to-\infty,\quad
	%	v(t,x)\underset{x\cdot e \to +\infty}{\longrightarrow}  0 \ \text{ as }x\.e\to+\infty.
	\end{cases}
	\Ff
	A pulsating traveling front with speed $c=0$ is a front-like stationary solution of~\eqref{evol},
	in the sense of \eqref{def f l}.
\end{definition}

Observe that, if $v$ is a pulsating traveling front, then $x \mapsto v(t,x)$ is front-like for all $t\in\R$. 
The use of traveling fronts to study the large time behavior of solutions
of the Cauchy problem in the combustion and bistable cases is quite a natural approach, already used in the pioneering paper \cite{AW} for the homogeneous equation \eqref{FKPP}. For later use, we introduce the following:

\begin{hypothesis}\label{H2}
For every direction $e \in \mathbb{S}^{N-1}$, there is a pulsating traveling front with
speed $c>0$.
\end{hypothesis}
The requirement that the speeds of the fronts are positive in Hypothesis \ref{H2} is 
necessary for the invasion property to hold, 
in the sense that, owing to the parabolic comparison principle, the existence of -- even a single -- pulsating traveling front with speed $c\leq 0$ prevents the invasion of all solutions with compactly supported initial data smaller than $1$. 
\begin{theorem}\label{inv ptf}
Assume that there is $\delta>0$ such that
$$s\mapsto f(x,s) \text{ is nonincreasing on 
	 $(0,\delta)$ and on $(1-\delta,1)$}.$$	
Then, Hypothesis \ref{H2} implies that properties 
$(i)$-$(ii)$ of Theorem \ref{invasion general} hold.	
\end{theorem}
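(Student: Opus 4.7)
By Theorem~\ref{invasion general}, it suffices to establish property $(i)$: that every solution $u$ of \eqref{evol} with nonnegative front-like initial datum invades the domain. Fix such a $u_0$ with direction $e \in \mathbb{S}^{N-1}$, and let $v$ be the pulsating traveling front in direction $e$ with speed $c > 0$ provided by Hypothesis~\ref{H2}. The plan is to construct a subsolution built from $v$ which lies below $u_0$ at $t=0$ and converges to $1$ locally uniformly as $t\to+\infty$; invasion for $u$ will then follow from the parabolic comparison principle.

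The ansatz is of Fife-McLeod type, adapted to the pulsating framework:
$$\underline{u}(t, x) := \max\!\bigl(v(t - T_0 + \xi(t), x) - q(t),\, 0\bigr),$$
where $T_0 > 0$ is large, $\xi$ is a bounded $C^1$ shift with $\xi(0) = 0$, and $q$ is positive $C^1$ decaying to $0$. Taking the maximum with $0$ preserves the subsolution property since $0$ solves \eqref{evol}, so the key task is to verify, on $\{v - q > 0\}$, the pointwise inequality
$$\xi'(t)\,\partial_t v + \bigl[f(x,v) - f(x, v - q(t))\bigr] \leq q'(t),$$
with $v$ evaluated at $(t - T_0 + \xi(t), x)$. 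Under our hypotheses, pulsating traveling fronts are strictly monotone in time (a standard sliding-method argument), and periodicity yields a uniform lower bound $\partial_t v \geq \mu > 0$ on the bulk region $\{\delta \leq v \leq 1 - \delta\}$. Taking $\xi'(t) < 0$ with $|\xi'|$ sufficiently large relative to $q$, the negative bulk term $\xi'\partial_t v$ absorbs the Lipschitz error $|f(v) - f(v - q)| \leq K q$. In the tails $\{v \leq \delta\}$ and $\{v \geq 1 - \delta\}$, the monotonicity of $f$ in the theorem yields $f(x,v) - f(x, v - q) \leq 0$, and the inequality is then handled by the same balancing, possibly after inserting a $v$-dependent weight in the perturbation to cope with the degeneracy of $\partial_t v$ at the tail. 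Concretely, one may try $q(t) = q_0 e^{-\alpha t}$ and $\xi(t) = -\xi_0(1 - e^{-\alpha t})$, with $\xi_0 \gtrsim q_0/(\alpha\mu)$ making the bulk inequality work.

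For the initial comparison, fix $q_0 > 0$ small; by front-likeness of $u_0$, there is $R > 0$ with $u_0(x) \geq 1 - q_0$ whenever $x\cdot e \leq -R$. The positive speed of $v$ allows $T_0$ to be chosen large enough that $v(-T_0, x) \leq q_0$ for all $x$ with $x\cdot e \geq -R$. Then on $\{x\cdot e \leq -R\}$, $\underline{u}(0, x) \leq 1 - q_0 \leq u_0(x)$; on $\{x\cdot e > -R\}$, $\underline{u}(0, x) \leq 0 \leq u_0(x)$. The parabolic comparison principle gives $u \geq \underline{u}$ on $(0,+\infty) \times \Omega$. Boundedness of $\xi$ ensures $t - T_0 + \xi(t) \to +\infty$, and the pulsating relation~\eqref{ptf} combined with $c > 0$ yields $v(s, x) \to 1$ locally uniformly in $x$ as $s \to +\infty$; together with $q(t) \to 0$ and $u \leq 1$, this delivers $u(t, x) \to 1$ locally uniformly, i.e., the invasion property.

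The main obstacle is the subsolution construction. The bulk analysis depends on the time-monotonicity $\partial_t v > 0$, which in the periodic framework is standard but nontrivial; the tail analysis relies on the monotonicity hypothesis on $f$, and because that monotonicity is only assumed to be nonstrict, a careful weighted perturbation of the front profile may be required to avoid the degenerate zone where both $\partial_t v$ and the slope of $f$ can vanish.
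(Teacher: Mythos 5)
Your proposal attacks property $(i)$ of Theorem~\ref{invasion general} directly, building a Fife--McLeod subsolution $\underline{u}(t,x)=\max(v(t-T_0+\xi(t),x)-q(t),0)$ and comparing it to the Cauchy solution. This is a genuinely different route from the paper, which instead observes that in the proof of Theorem~\ref{invasion general} property $(i)$ is used only in Step~4 (to rule out the existence of a particular entire solution $w_{\infty}$), and then checks that Step~4 survives if one replaces $(i)$ by Hypothesis~\ref{H2}: the positivity of the speed, the uniform front-like property \eqref{wfl} of $w_{\infty}$, and the monotonicity of $f$ near $0$ and $1$ give, via \cite[Lemmas 1, 2]{D}, the ordering $v\leq w_{\infty}$ for all times, which contradicts $w_{\infty}(0,y)=\eta'$. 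The paper's approach is more parsimonious: one never needs to prove invasion for a generic front-like datum, only to compare a front with one carefully constructed limiting solution enjoying the \emph{uniform-in-time} front-like property \eqref{wfl}, which is much more than an arbitrary front-like datum provides.

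There is, however, a genuine gap in your construction, precisely in the tail estimate that you flag but do not resolve. On $\{0<v-q<\delta\}$ the inequality you must ensure is $\xi'\partial_t v + f(x,v)-f(x,v-q)\leq q'$. Under the hypothesis of Theorem~\ref{inv ptf} you only get $f(x,v)-f(x,v-q)\leq 0$ (nonstrict), so the entire burden falls on $\xi'\partial_t v$, which must be pushed below $q'(t)=-\alpha q(t)<0$. But $\partial_t v$ degenerates to $0$ precisely as $v\to 0$, so on $\{v\text{ close to }q(t)\}$ the product $\xi'\partial_t v$ tends to $0$ unless $|\xi'|$ is taken unboundedly large; the quantitative version of this forces $\int_0^\infty|\xi'|\,dt$ to diverge, so $\xi$ cannot stay bounded and $t-T_0+\xi(t)\not\to+\infty$. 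The classical Fife--McLeod argument closes the tail by using the strict linear decay $f'(0)<0$, $f'(1)<0$, so that $f(x,v)-f(x,v-q)\leq -mq$ with $m>\alpha$; the present theorem deliberately allows $f'(0^+)=0$ (e.g.\ combustion-type $f$), and your inequality then fails near the interface between $\{v>q\}$ and $\{v\leq q\}$. The ``$v$-dependent weight'' remark points in a plausible direction, but the modified terms it introduces (in particular $-qg'(v)f(x,v)$ and $qg''(v)(A\nabla v)\cdot\nabla v$) do not obviously have the right signs, and no argument is given that they can be controlled; as written, the proposal does not establish the subsolution property. You also take for granted that pulsating fronts with $c>0$ satisfy $\partial_t v>0$ with a uniform positive lower bound on the level set $\{\delta\leq v\leq 1-\delta\}$; this is true in the present periodic setting but is itself a nontrivial fact (it is part of what the paper delegates to \cite[Lemmas 1, 2]{D}) and should be stated as an input rather than dismissed as a ``standard sliding-method argument.''
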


The usefulness of this result lies in the fact that there is
a huge literature about pulsating traveling fronts. In particular, their existence 
and the positivity of their speed 
is proved by Berestycki and Hamel in \cite[Theorems 1.13 , 1.14]{pulsating}
when $f$ is of the monostable or combustion type (cf.~the definition given in
Section \ref{intro} in the homogeneous case)
under the following additional assumptions on~$q$:
\begin{equation}\label{type1 q}
\begin{cases}
\nabla \cdot q =0 \quad & \text{in }\ol\O, \\
\int_{ \mathcal{C}}q = 0, \\
q\cdot \nu = 0 \quad & \text{on } \partial \Omega.
\end{cases}
\end{equation}
Observe that in the monostable case
the monotonicity hypothesis of Theorem \ref{inv ptf} is
not fulfilled, but since the larger the nonlinearity,
	the more likely the invasion property holds, the idea is to consider 
	a term $\tilde f\leq f$ of combustion type.
 This allows to derive the  following result 
concerning non-negative reaction terms, such as monostable and combustion.
\begin{corollary}\label{invasion combustion}
Assume that $q$ satisfies \eqref{type1 q} and that
$f$ satisfies 
\begin{equation}\label{combustion}
\forall x \in \Omega, \ \forall s \in (0,1), \quad f(x,s)\geq 0.
\end{equation}
Then, properties $(i)$-$(ii)$ of Theorem \ref{invasion general} hold.
	%for all $\eta\in(\theta,1)$, where $\theta$ is defined by \eqref{deftheta}, there is $r>0$ such that any solution to \eqref{evol} with 
	%a non-negative bounded initial datum $u_0$ satisfying
	%$$u_0>\eta\ \text{ in }\O\cap B_r$$
	%converges to $1$ as $t\to+\infty$ locally uniformly in $x \in \ol \O$.
\end{corollary}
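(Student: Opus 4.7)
The plan is to reduce to Theorem \ref{inv ptf} by sandwiching $f$ from below with an auxiliary reaction of combustion type. The key observation is that the assumption $f \geq 0$ on $\Omega \times (0,1)$, together with the standing assumption that $f > 0$ on $\ol\Omega \times (S,1)$, leaves ample room to insert a combustion-type function $\tilde f$ below $f$, for which we can directly invoke existence results from the literature.

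First I would construct a Lipschitz function $\tilde f : [0,1] \to \R$ depending only on $s$, hence trivially periodic. Pick $\theta' \in (S,1)$ and set $\tilde f \equiv 0$ on $[0,\theta']$, $\tilde f > 0$ on $(\theta', 1)$, $\tilde f(1) = 0$, with $\tilde f$ strictly decreasing on some interval $(1-\delta, 1)$. Because $f > 0$ on $\ol\Omega \times (S,1)$ and $f$ is continuous and $x$-periodic, for every $\eta>0$ we have $\inf_{x \in \ol\Omega,\ s \in [\theta', 1-\eta]} f(x,s) > 0$; choosing $\tilde f$ of sufficiently small amplitude we can arrange $0\leq\tilde f(s) \leq f(x,s)$ for all $x\in\ol\Omega$, $s\in[0,1]$.

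Next I would invoke \cite[Theorems 1.13 and 1.14]{pulsating}: since $\tilde f$ is of combustion type and $q$ satisfies \eqref{type1 q}, for every direction $e \in \Sph$ there exists a pulsating traveling front with positive speed for the equation obtained by replacing $f$ with $\tilde f$ in \eqref{evol}. Hence Hypothesis \ref{H2} holds for this modified equation. By construction, $\tilde f$ is (trivially) nonincreasing on $(0,\theta')$ and, by choice, on $(1-\delta,1)$, so the monotonicity hypothesis of Theorem \ref{inv ptf} is satisfied. Applying Theorem \ref{inv ptf} to the $\tilde f$-equation therefore yields that properties $(i)$--$(ii)$ of Theorem \ref{invasion general} hold for it.

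Finally I would transfer the conclusion to the original equation via the parabolic comparison principle. Given an initial datum $u_0$ satisfying the hypothesis of $(i)$ or $(ii)$, let $u$ and $\tilde u$ be the solutions of \eqref{evol} with reactions $f$ and $\tilde f$ respectively, both starting from $u_0$. Since $\tilde f \le f$, comparison gives $\tilde u \le u$, so by the previous step $\liminf_{t\to+\infty} u(t,\cdot) \ge 1$ locally uniformly. For the matching upper bound, I would compare $u$ with the spatially-homogeneous ODE solution $y(t)$ of $\dot y = \sup_{x\in\ol\Omega} f(x,y)$ starting from $\|u_0\|_\infty$, which decreases to $1$ because the Lipschitz extension of $f$ is negative for $s>1$ and vanishes at $s=1$; this gives $\limsup u \le 1$. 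Combining the two bounds shows $u(t,\cdot)\to 1$ locally uniformly, establishing $(i)$--$(ii)$ of Theorem \ref{invasion general} for the original equation. The only genuinely delicate point is the construction of $\tilde f$ in the first step, so that it is simultaneously of combustion type, Lipschitz, pointwise dominated by $f$, and monotone near $1$; once this is in place, the remainder is a straightforward assembly of the Berestycki--Hamel existence theorem, Theorem \ref{inv ptf}, and standard parabolic comparison.
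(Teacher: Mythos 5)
Your proof proposal takes essentially the same route as the paper: construct an $x$-independent combustion nonlinearity $\tilde f \le f$, invoke the Berestycki--Hamel existence theorem to get Hypothesis~\ref{H2} for the $\tilde f$-equation, apply Theorem~\ref{inv ptf}, and transfer to $f$ by comparison. The broad strategy is correct.

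There is, however, a small but real imprecision in the final transfer step that the paper avoids. You fix $\theta'\in(S,1)$ once and for all and set $\tilde f\equiv 0$ on $[0,\theta']$; hence the ``$\theta$'' of Theorem~\ref{invasion general}$(ii)$ applied to the $\tilde f$-equation is $\theta'$, which is in general strictly larger than the $\theta$ of $f$ defined by~\eqref{deftheta}. When you write ``given an initial datum $u_0$ satisfying the hypothesis of $(i)$ or $(ii)$ \dots comparison gives \dots $\liminf u\ge 1$,'' the argument via $\tilde u \le u$ only goes through if $u_0$ satisfies the $(ii)$-hypothesis \emph{for the $\tilde f$-equation}, i.e.\ $u_0>\eta$ with $\eta>\theta'$; a $u_0$ that only exceeds some $\eta\in(\theta,\theta']$ on a large ball is not covered. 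So you do not directly obtain $(ii)$ with the correct threshold $\theta$. This is easily repaired in two ways: either observe that your argument does give $(i)$ (front-like data eventually exceed any $\eta<1$, hence certainly $\theta'$), and then appeal to the equivalence $(i)\Leftrightarrow(ii)$ proved in Theorem~\ref{invasion general}; or, as the paper does, take the cut-off level to be $\theta+\e$ and let $\e\to 0$ at the end so as to cover every $\eta\in(\theta,1)$. Either fix is one line, but without it the claim ``establishing $(i)$--$(ii)$ \dots for the original equation'' is not justified as written. A further minor remark: choosing $\theta'>S$ is more restrictive than needed --- any $\theta'>\theta$ works, since $f>0$ on $\ol\O\times(\theta,1)$ by definition of $\theta$.
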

We point out that Corollary \ref{invasion combustion} could be also derived by combining
the results of~\cite{pulsating} with those of Weinberger \cite{W} mentioned before.

Under the generality of our assumptions on $f$, which include the bistable case,
the question of the existence of fronts is still widely open, in particular if the domain $\Omega$ is not $\mathbb{R}^{N}$. We are not aware of any result in such case. 
If the equation is set on $\mathbb{R}^{N}$, then Xin \cite{Xin1,Xin2} and Ducrot \cite{Ducrot} derive the existence of fronts with positive speed under some conditions on the coefficients. Theorem \ref{inv ptf} then yields the invasion result in such cases, 
see Corollaries \ref{invasion bistable}, \ref{invasion bistable 2} below.

One may wonder whether Hypothesis \ref{H2} is necessary in Theorem \ref{inv ptf}. This is not the case. Indeed Zlato\v{s} exhibits in \cite{Z1} a bistable periodic nonlinearity such that the $1$-dimensional equation 
$$
\partial_{t}u = \partial_{xx} u +f(x,u), \quad  t> 0, \ x\in \mathbb{R},
$$
admits no pulsating traveling fronts (actually, he shows that there are no \emph{transition fronts}, a notion generalizing pulsating traveling fronts), but such that the invasion property holds for sufficiently large compactly supported initial data. On the other hand, we exhibit in Theorem \ref{th blocking bistable} below a situation where there are no fronts and 
invasion does not occur. We mention that other examples of reaction-diffusion equations that do not admit fronts are known in dimension $1$, see \cite{Xin3}, and in cylindrical domains (with a drift-term), see \cite{BHcylinders}.

Once we know that invasion occurs for a solution with compactly supported initial datum
smaller than $1$, it is not hard to see that this happens with a strictly positive speed, see Remark \ref{vitesse positive} below. In all the situations where Hypothesis \ref{H2} holds, we can actually characterize this speed through an analogous formula to the one derived by Freidlin and G\"artner in \cite{FG,Freidlin} in the KPP case.
This was first proved by the second author in \cite{F-G} when $\Omega = \mathbb{R}^{N}$ and then by the first author in \cite{D} for periodic domains. We mention that Weinberger also derives a similar formula in \cite{W}.

Our next result applies to equations with non-negatives nonlinearities
-- such as monostable or combustion -- set in the whole 
space. It provides an explicit sufficient condition for invasion to occur, as well
as a lower bound on the asymptotic speed at which this takes place. 
This is expressed in terms of the following quantity:
$$
R(f) :=\sup_{0<K<H<1}\bigg((H-K)\min_{\substack{x\in \ol\O\\s \in [K,H]}}f(x,s)\bigg),
$$
which is the area of the largest rectangle one can fit under the graph of $s \mapsto \inf_{x\in \ol\O}f(x,s)$ in the upper half-plane. 
For this result, it is convenient to consider equations in non-divergence form, that is,
\begin{equation}\label{evol2}
\partial_t u= Tr (A(x) D^{2} u)+q(x)\cdot \nabla u + f(x,u), \quad \forall t>0,\ \forall x\in \mathbb{R}^{N},
\end{equation}
under the usual standing assumption of Section \ref{sec:results} on 
the terms $A$, $q$, $f$.
\begin{theorem}\label{invasion2}
Assume that $f$ satisfies \eqref{combustion} and that $q$ is continuous 
and satisfies
%$$
%\lim_{\lambda \to +\infty}\left\|k +q(x)\cdot\frac{x}{\vert x\vert}\right\|_{L^{\infty}(\mathbb{R}^{N}\backslash B(0,\lambda))}<  \frac{a}{\sqrt{A}} \sqrt{R(f)},
%$$
$$
\limsup_{\vert x \vert \to +\infty} \, q(x)\cdot\frac{x}{\vert x\vert}<  \frac{\lambda}{\sqrt{\Lambda}} \sqrt{R(f)},
$$
where $\lambda, \Lambda$ are given by \eqref{A}. Then, properties $(i)$-$(ii)$ of 
Theorem \ref{invasion general} hold for the equation \eqref{evol2}.

Moreover, calling $w^{\star} := \frac{\lambda}{\sqrt{\Lambda}} \sqrt{R(f)}-\limsup_{\vert x \vert \to +\infty} q(x)\cdot\frac{x}{\vert x \vert} $, there holds
$$
\forall c< w^{\star}, \quad  \inf_{\vert x \vert \leq ct} u(t,x) \underset{t \to +\infty}{\longrightarrow} 1. 
$$

\end{theorem}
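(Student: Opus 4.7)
The plan is to first establish property $(i)$ of Theorem \ref{invasion general} -- invasion for front-like initial data -- by constructing radial expanding subsolutions, and then invoke Theorem \ref{invasion general} (whose proof relies only on the comparison principle and so extends verbatim to the non-divergence form \eqref{evol2}) to obtain property $(ii)$. The spreading bound $\inf_{|x|\leq ct} u(t,x)\to 1$ will fall out of the construction. Any front-like $u_0$ exceeds some $H\in(\theta,1)$ on an arbitrarily large ball after space translation, so in both reductions the key is to produce a subsolution whose initial profile is supported on a ball and that spreads at speed $c$, for any $c<w^\star$.

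Fix $\varepsilon>0$ small. Choose $0<K<H<1$ with $M(H-K)\geq R(f)-\varepsilon$, where $M:=\min_{x\in\bar\Omega,\,s\in[K,H]} f(x,s)>0$, and let $\tilde f(s)$ be a Lipschitz combustion-type minorant of $s\mapsto \inf_{x\in\bar\Omega} f(x,s)$, vanishing on $[0,K]\cup[H,1]$ and close to $M$ on $[K+\delta,H-\delta]$. Set $q_\infty:=\limsup_{|x|\to\infty} q(x)\cdot x/|x|$. I would look for $v$ in the radial traveling form $v(t,x)=\phi(|x|-ct+R_0)$ with $\phi$ compactly supported and decreasing. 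Writing the subsolution inequality for \eqref{evol2} in radial coordinates, absorbing the radial correction $|\Tr(A)-A\hat x\cdot\hat x|/|x|=O(\Lambda/R_0)=o(1)$ and the drift term $q\cdot\hat x\leq q_\infty+\varepsilon$ (both valid for $|x|$ large, guaranteed by taking $R_0\gg 1$) into an error, reduces the problem to the one-dimensional inequality
\[
(A\hat x\cdot\hat x)\,\phi''(z)+(c+q_\infty+2\varepsilon)\,\phi'(z)+\tilde f(\phi(z))\geq 0.
\]

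The central technical step is to construct such a $\phi$ with the sharp speed. On the three regions $\{\phi>H\}$, $\{\phi\in[K,H]\}$, $\{\phi<K\}$ one solves either the homogeneous linear ODE $a\phi''+c'\phi'=0$ (outer regions, where $\tilde f=0$) or the forced linear ODE $a\phi''+c'\phi'+M=0$ (middle region), with $c':=c+q_\infty+2\varepsilon$ and $a\in[\lambda,\Lambda]$ chosen adversarially to the sign of $\phi''$: one takes $a=\lambda$ where $\phi''\geq 0$ and $a=\Lambda$ where $\phi''\leq 0$. Matching the resulting exponential/affine pieces at the levels $\phi=K$ and $\phi=H$ yields compatibility conditions that can be solved precisely when $c'\leq \lambda\sqrt{M(H-K)}/\sqrt{\Lambda}$, and letting $\varepsilon\to 0$ produces any $c<w^\star$.

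The main obstacle is this speed computation: extracting the factor $\lambda/\sqrt{\Lambda}$ (rather than the naive $\sqrt{\lambda}$) from the matching requires carefully exploiting the convexity of $\phi$ near the base and its concavity near the plateau, so that the opposite ellipticity bounds on $A\hat x\cdot\hat x$ can be used in different regions. Once $v$ is built, comparison yields $u(t,\cdot)\geq v(t,\cdot)$ for any front-like $u_0$ after a suitable space-translation, so $u\geq H-o(1)$ on $\{|x|\leq ct\}$ for $t$ large. A standard parabolic argument using $f>0$ on $[H,1]$ and uniform spreading on expanding balls then lifts $u$ from $H$ to $1$ locally uniformly, simultaneously yielding property $(i)$, hence property $(ii)$ via Theorem \ref{invasion general}, together with the asymptotic speed bound.
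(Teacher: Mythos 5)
Your proposal follows essentially the same route as the paper. Both constructions build a radially expanding subsolution $v(t,x)=\phi(|x|-\bar c t-\rho)$, absorb the drift and the $O(\Lambda/\rho)$ radial-curvature correction into an upper bound on the first-order coefficient $B$, and reduce to a one-dimensional inequality $A\phi'' + B\phi' + f(\phi)\geq 0$ for all $A\in[\lambda,\Lambda]$ and $B\leq \bar B:=\lambda\sqrt{R(f)}/\sqrt{\Lambda}$; the factor $\lambda/\sqrt{\Lambda}$ comes out, in both arguments, from splitting $\phi$ into a concave piece near the plateau $H$ (where the adversarial ellipticity constant is $\Lambda$) and a convex tail (where it is $\lambda$). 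The difference is only in the explicit shape of those pieces: the paper's Lemma \ref{propdim} joins a quadratic $H-\frac{\gamma}{2}z^2$ to a power function $\mu(z_2-z)^\beta$, solves the resulting algebraic system, and sends $\beta\to\infty$; you propose solving the constant-coefficient ODEs with the adversarial $a$ (exponential/affine pieces) and matching at the levels $K$ and $H$. Two small caveats. First, your assertion that the matching is solvable \emph{precisely} when $c'\leq \lambda\sqrt{M(H-K)}/\sqrt{\Lambda}$ is not an identity -- the wave speed of the piecewise-constant combustion profile is in general different (and can be larger), so ``precisely'' should read ``at least for'', which is all one needs: for any $c'\leq\bar B$ and any $A\in[\lambda,\Lambda]$ the inequality holds, and that gives the subsolution. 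Second, a single constant-coefficient ODE on $\{\phi\in[K,H]\}$ has fixed convexity, so if the inflection falls strictly inside that interval you would have to split it and match $C^1$ there; the paper sidesteps this by placing the inflection exactly at the junction $\phi=K$. Neither point is a real obstruction, and your final step -- pass from the subsolution to spreading, then to property $(i)$, then to $(ii)$ via Theorem \ref{invasion general} (which indeed only uses the comparison principle and parabolic estimates and so applies unchanged to \eqref{evol2}) -- is the same as the paper's, which simply establishes $(ii)$ directly via the comparison argument of Theorem \ref{th persistence}.
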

Let us state make some comments about this result. 
First, we point out that it holds without any assumption on $q$ besides boundedness,
unlike Corollary \ref{invasion combustion} which requires \eqref{type1 q}.
Next, the theorem holds true without the periodicity assumption, provided \eqref{combustion} is fulfilled by the function $\inf_{x \in \mathbb{R}^{N}}f(x,\cdot)$. 
This is a non-degeneracy hypothesis, without which the result could not hold (if $f(x,\cdot) \equiv 0$ for $x$ large for instance, there is no way invasion could occur). The proof will actually use only this non-degeneracy hypothesis, which is guaranteed by the periodicity.
Finally, one can observe that the more 
negative $q(x)\cdot \frac{x}{\vert x \vert}$ is, the larger $w^{\star}$ becomes. This is a bit counter-intuitive because $q(x)\cdot \frac{x}{\vert x \vert}<0$ roughly
means that the drift ``gathers the mass" instead of scattering it. Hence, such a drift should slow down the invasion. This suggests that our bound $w^{\star}$ on the invasion speed may not be optimal in general. On the other hand, because this drift prevents the mass to scatter, it is natural that the more negative $q(x)\cdot \frac{x}{\vert x \vert}$ is, the more likely invasion should occur.
In any case, though probably not optimal, our estimate on the invasion speed has the
advantage of being explicit and easy to compute. 
While an estimate in the same spirit is derived in \cite{BHRossi} under the assumption
$\partial_s f(\.,0)>0$, 
we are not aware  of analogous results of this type
in the combustion case.

The proof of Theorem \ref{th persistence} is carried out in Section \ref{sec:persistence}. Theorem~\ref{invasion general} and Theorem~\ref{inv ptf} are derived in Section~\ref{section invasion}.
In Section \ref{sec:cor}, we derive Corollary \ref{invasion combustion}. Theorem~\ref{invasion2} is proved in Section \ref{estimate}.

\subsubsection{Influence of the geometry of the domain}

The results presented above provide some sufficient conditions ensuring the persistence or invasion properties.
It is known that, if $f$ is of the bistable type, 
the geometry of the domain can produce ``obstacles'' which may prevent propagation.
This is observed by Berestycki, Hamel and Matano in \cite{BHM} for an exterior domain (i.e., the complement of a compact set)
and by Berestycki, Bouhours and Chapuisat  in~\cite{BBC}
for a cylindrical-type domain with a bottleneck.
If such obstacles repeat periodically in the domain, one could expect that 
the blocking property holds.
We will show that this is indeed the case, but also that other scenarios are possible.

We consider the simplest problem set in a periodic domain:
\Fi{evol homogeneous}
\begin{cases}
\partial_t u=\Delta u+f(u), & t>0,\ x\in\O \\
\partial_{\nu} u=0, & t>0,\ x\in\partial\O,
\end{cases}
\Ff
where $f$ is an \emph{unbalanced bistable} nonlinearity, i.e., 
\Fi{unbalanced}
\exists\theta\in(0,1),\quad f<0\quad\text{in }(0,\theta),\quad f>0\quad\text{in }(\theta,1), \quad \int_0^1f(s)ds>0.
\Ff
Notice that, under the assumption 
\eqref{unbalanced}, 
Theorem~\ref{th persistence} ensures that the persistence property 
for ``large enough" initial data holds for problem \eqref{evol homogeneous}, 
whatever the domain $\O$ is.
However, we will see that the geometry of $\O$ 
%depending on the geometry of 
%
affects the way this persistence takes place, in three radically different ways.
Namely, we construct three periodic domains $\O_1$, $\O_2$, $\O_3$ that exhibit, respectively,
invasion, blocking (in the sense of Definition \ref{def}) and a new phenomenon that
we call
{\em oriented invasion}. 
Let us mention that our results could be carried out for the more general equation~\eqref{evol}, 
but we have chosen to emphasize here the role of the geometry of the domain. 

The  domain $\O_1$ is given by the whole space with a star-shaped hole $K$ repeated periodically, with a period $L$ sufficiently large.

\begin{theorem}\label{th invasion bistable}
Let $f$ satisfy \eqref{unbalanced} 
and let $K\subset \mathbb{R}^{N}$ be a star-shaped compact set. Then, for $L>0$ sufficiently large, properties $(i)$-$(ii)$ of Theorem \ref{invasion general} hold for the problem \eqref{evol homogeneous} set on 
$$
\Omega_{1} := \left(K + L\mathbb{Z}^{N}\right)^{c}.
$$
\end{theorem}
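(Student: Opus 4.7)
The plan is to verify property $(i)$ of Theorem~\ref{invasion general} for problem \eqref{evol homogeneous} on $\O_1$, as that theorem makes $(i)$ and $(ii)$ equivalent. Fix $e\in\mathbb{S}^{N-1}$ and a front-like initial datum $u_0$ in the sense of \eqref{def f l}, and let $u$ denote the corresponding solution; the goal is to show that $u(t,x)\to 1$ locally uniformly in $\ol{\O_1}$.

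The building block is the planar bistable wave. Assumption \eqref{unbalanced} yields a unique (up to translation) front $(c_0,\phi)$ with $c_0>0$, $\phi\in C^2(\R,(0,1))$, $\phi(-\infty)=1$, $\phi(+\infty)=0$, $\phi'<0$, solving $\phi''+c_0\phi'+f(\phi)=0$. The function $(t,x)\mapsto\phi(x\cdot e-c_0 t)$ is a solution of the PDE on all of $\R^N$ but fails the Neumann condition on $\partial\O_1$.

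The heart of the argument is a geometric construction made possible by the star-shapedness of $K$ and by $L$ being sufficiently large. Choosing a star-center of $K$ as origin, $\nu_K(y)\cdot y\geq 0$ on $\partial K$, and hence $\nu(y)\cdot y\leq 0$ on $\partial\O_1$, where $\nu=-\nu_K$ is the outward normal of $\O_1$. One then constructs $\psi\in C^2(\ol{\O_1})$, periodic modulo the affine term $x\cdot e$, that coincides with $x\cdot e$ outside a fixed neighborhood of each periodic copy of $K$, satisfies $\nabla\psi\cdot\nu\geq 0$ on $\partial\O_1$, and is $C^2$-close to $x\cdot e$ in the quantitative sense that $|\nabla\psi|^2-1$ and $|\Delta\psi|$ can be made arbitrarily small upon taking $L$ large. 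Near each hole, $\psi$ is obtained by superposing to $x\cdot e$ a localized radial bump anchored at the corresponding star-center; star-shapedness is precisely what gives the sign $\nabla\psi\cdot\nu\geq 0$, exactly as in the exterior-domain argument of Berestycki--Hamel--Matano \cite{BHM}. Taking $L$ large ensures that these bumps are disjointly supported in the unit cell and separated by wide corridors where $\psi$ is affine, which simultaneously permits a $C^2$ gluing across cells and keeps the $C^2$-errors uniformly small.

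With such a $\psi$ at hand, for some speed $c\in(0,c_0)$ close to $c_0$ the function $\underline u(t,x):=\phi(\psi(x)-ct)$ is a subsolution of \eqref{evol homogeneous}: its interior defect is controlled by $c_0|\nabla\psi|^2-c-\Delta\psi$ together with the term $f(\phi)(|\nabla\psi|^2-1)$, both favorable thanks to the $C^2$-smallness of the deformation and the margin $c_0-c>0$; moreover $\nabla\psi\cdot\nu\geq 0$ combined with $\phi'<0$ yields $\partial_\nu\underline u\leq 0$ on $\partial\O_1$. Since $u_0\geq 1-\varepsilon$ on $\{x\cdot e\leq -R\}\cap\ol{\O_1}$ for $R$ large, a translation of $\psi$ ensures $\underline u(0,\cdot)\leq u_0$; the parabolic comparison principle then gives $u(t,x)\geq\phi(\psi(x)-ct-\xi_0)$ for all $t\geq 0$ and $x\in\ol{\O_1}$, and fixing $x$ and letting $t\to+\infty$ yields $\liminf_{t\to+\infty}u(t,x)\geq 1-\varepsilon$. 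Arbitrariness of $\varepsilon$ together with standard parabolic regularity upgrade this to locally uniform invasion. The main obstacle is the construction of $\psi$: quantitatively balancing the star-shaped radial deformation against the $C^2$-control required for the subsolution inequality, while preserving periodicity modulo $x\cdot e$ and the uniform boundary sign condition across all cells, is precisely what forces $L$ to be taken sufficiently large.
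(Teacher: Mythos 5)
Your strategy is to verify property $(i)$ directly by constructing a moving subsolution $\underline u(t,x)=\phi(\psi(x)-ct)$ built from the planar bistable wave, where $\psi$ is a deformation of $x\cdot e$ adapted to the geometry. This is philosophically close to what the paper does for the periodic \emph{cylinder} in Section \ref{sec:cyl} (where $\psi(x)=x_1+\eta|x_2|^2$), but it is not what the paper does for $\Omega_1$: there the paper verifies $(ii)$ directly by sliding the compactly supported \emph{stationary} subsolution $u_R(\cdot-z)$ (the solution of the Dirichlet problem \eqref{tronque} in a ball, radial and decreasing by Gidas--Ni--Nirenberg), and the star-shaped hypothesis enters only through Hopf's lemma at a potential boundary touching point.

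The key step of your argument does not go through as stated. You claim that $\psi$ can be built so that $\nabla\psi\cdot\nu\ge 0$ on $\partial\O_1$ while keeping $\||\nabla\psi|^2-1\|_\infty$ and $\|\Delta\psi\|_\infty$ arbitrarily small by taking $L$ large. This is impossible. Take a boundary point $y\in\partial(K+k)$ where the outward normal of $\O_1$ satisfies $\nu(y)\cdot e<0$; for a compact hole, there are always such points with $\nu(y)\cdot e$ bounded away from $0$ (indeed as close to $-1$ as one wishes). Writing $\nabla\psi(y)=e+w(y)$, the boundary sign condition reads $w(y)\cdot\nu(y)\ge-e\cdot\nu(y)=|e\cdot\nu(y)|$, hence $|w(y)|\ge|e\cdot\nu(y)|$, a fixed positive quantity. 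Since the local geometry of the hole is independent of $L$ (enlarging $L$ only moves the copies of $K$ farther apart, it does not flatten $\partial K$), the deformation of $\nabla\psi$ near each hole is of order one no matter how large $L$ is, and $|\nabla\psi|^2-1=2e\cdot w+|w|^2$ cannot be made uniformly small there; the same goes for $\Delta\psi$. (In the toy case $K=\ol B_\rho$ and $\psi=x\cdot e+g(|x|)$, the boundary condition forces $g'(\rho)\le-1$ and $\nabla\psi$ actually reverses direction near the ``shadowed'' side of the hole.) Consequently the interior inequality $(c_0|\nabla\psi|^2-c-\Delta\psi)\phi'+(|\nabla\psi|^2-1)f(\phi)\le0$ cannot be closed by the perturbative argument you describe. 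The reason the analogous computation succeeds for the cylinder in Section \ref{sec:cyl} is that there the boundary slope $\upsilon'$ is made small \emph{everywhere} by choosing $L$ large, whereas $\partial K$ has normals in all directions.

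Even if one were willing to allow $\nabla\psi$ to deviate far from $e$ near the holes, additional issues would arise (losing monotonicity of $\psi$ in the $e$-direction, defining a $C^2$-compatible, periodic gluing, and the delicate balance between the front datum and $\phi(\psi(\cdot)-\xi_0)$ near $-\infty$, where both tend to $1$). By contrast, the paper sidesteps all of this: it uses the bounded stationary $u_R$, which automatically satisfies the Neumann condition wherever $B_R(z)\subset\O_1$, and the star-shapedness is used precisely at a single touching point of $\partial K$ to rule out a contact via Hopf's lemma, a pointwise sign condition rather than a global geometric construction. If you want to keep a front-based argument, you would have to account for the true cost of going around the holes; but then the cleaner route is the paper's sliding argument.
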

%
%
%

%
%Let us mention that, under the hypotheses of Theorem \ref{th invasion bistable},
%the existence of the  asymptotic speed of spreading  remains an open question.
%Indeed, in order to apply the result of \cite{D} providing the asymptotic speed of spreading
%one needs the existence of fronts, which is precisely the property we do not know to hold
%in this case. 

Next, building on the result of \cite{BHM},
we exhibit a domain $\Omega_{2}$ where the propagation is always~blocked.

\begin{theorem}\label{th blocking bistable}
Let $f$ satisfy \eqref{unbalanced}. 
There exists a periodic domain $\Omega_{2}$ such that, for  
the problem \eqref{evol homogeneous} set on $\Omega_{2}$, the following hold:\begin{enumerate}[$(i)$]

\item Any solution arising from a compactly supported initial datum
$u_{0}\leq 1$ is blocked in the sense of Definition \ref{def}.

\item There exists a solution with compactly supported initial datum which converges 
 (increasingly) to a periodic non-constant stationary solution 
 as $t$ goes to $+\infty$.
%$$
 %\limsup_{\vert x \vert \to +\infty }  \left( \sup_{t>0} u(t,x) \right) = 0.
%$$

\item Invasion fails for every front-like initial datum and 
moreover there exist no pulsating traveling fronts.
\end{enumerate}
\end{theorem}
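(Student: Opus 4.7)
The plan is to construct $\Omega_2$ as a fully periodic array of bottleneck-shaped obstacles, adapting the exterior-domain construction of \cite{BHM}. Concretely, I would take $\Omega_2 = (K + L\Z^N)^c$, where $K \subset \R^N$ is a compact set whose complement in $\R^N$ exhibits the bistable blocking mechanism of \cite{BHM} (for instance, $K$ is a ``bulb'' whose interior communicates with the exterior only via a thin neck), and $L>0$ is chosen large enough that successive translates of $K$ are well separated. Each periodicity cell of $\Omega_2$ then contains chambers connected to their neighbors only through narrow necks, and the bottleneck mechanism of \cite{BHM} will block propagation across each neck.

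For assertion $(i)$, I would construct a stationary supersolution $\bar V$ of \eqref{evol homogeneous} on $\Omega_2$ that dominates the given compactly supported $u_0 \leq 1$ and satisfies $\sup_{\Omega_2} \bar V < 1$. The idea is to set $\bar V \equiv 1$ on a large bounded subdomain $D \subset \Omega_2$ containing $\supp u_0$, chosen so that $\partial D \cap \Omega_2$ lies inside narrow necks, and to let $\bar V$ transition within each such neck from $1$ down to the values of a periodic stationary solution $V_{\mathrm{per}}$ of \eqref{evol homogeneous} on $\Omega_2 \setminus \overline D$ with $\sup V_{\mathrm{per}} < 1$ (a natural candidate is the constant $\theta$, or a non-constant variational periodic stationary solution below $1$). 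The supersolution inequality $-\Delta \bar V \geq f(\bar V)$ is automatic where $\bar V \in \{1, V_{\mathrm{per}}\}$ and is ensured inside the necks by their thinness, via a local \cite{BHM}-type transition. Parabolic comparison then gives $u(t,\cdot) \leq \bar V$, hence the blocking property.

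For assertion $(ii)$, I would take an initial datum $u_0$ built from a compactly supported stationary subsolution of \eqref{evol homogeneous}, as in the Aronson--Weinberger construction underlying Theorem \ref{th persistence}. The map $t \mapsto u(t,\cdot)$ is then nondecreasing and, by $(i)$, remains bounded strictly below $1$; hence it converges as $t \to \infty$ to a bounded stationary solution $V_\ast$ with $0 < V_\ast < 1$. To identify $V_\ast$ as \emph{periodic} and non-constant, I would show that $V_\ast$ coincides with the minimal stationary solution above a carefully chosen periodic subsolution $\underline V$, obtained variationally by minimizing the energy $\int_\mathcal{C}(\tfrac12 |\nabla v|^2 - F(v))\,dx$ (with $F' = f$) over $H^1$ functions on the periodicity cell. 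The periodicity of $\underline V$ and the uniqueness of the minimal stationary solution above a given subsolution then force $V_\ast$ to inherit periodicity, while the strict inequalities $\theta < V_\ast < 1$ ensure it is non-constant.

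For assertion $(iii)$: if a pulsating traveling front with speed $c > 0$ existed, truncating its time-$0$ profile to a compactly supported initial datum and using parabolic comparison with the untruncated front would produce an invading solution, contradicting $(i)$. The case $c < 0$ is incompatible with $\int_0^1 f > 0$ via a standard energy identity (multiply the PDE by $\partial_t v$ and integrate over a space-time period cell), which forces $\mathrm{sign}(c) = \mathrm{sign}\int_\mathcal{C}\int_0^1 f$. The stationary case $c = 0$ and the failure of invasion for an arbitrary front-like $u_0$ are handled by a refinement of the blocking supersolution from $(i)$, applied in the half-space $\{x \cdot e > M\}$ where $u_0$ is small: one builds $\bar V$ equal to $1$ on a slab just past $x \cdot e = M$ and transitioning to $V_{\mathrm{per}} < 1$ at infinity, using the crude boundary estimate $u(t,x) \leq 1$ at $x \cdot e = M$ for parabolic comparison in this half-space. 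The principal technical obstacle is the fine construction of $\bar V$ in the second paragraph, specifically the verification that the transition across each narrow neck satisfies the supersolution inequality; this is where the \cite{BHM} bottleneck technology must be carefully adapted to the periodic setting.
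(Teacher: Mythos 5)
Your overall plan---periodically arraying the \cite{BHM} bottleneck obstacle and using it both to build blocking supersolutions and, together with a compactly supported subsolution, to produce a non-trivial steady state---is the same as the paper's. However, the proof has several concrete gaps where the details you propose either contradict themselves or ask for more than the \cite{BHM} construction actually provides.

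\medskip

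\emph{Assertion $(i)$.} You claim to build a stationary supersolution $\bar V$ with $\sup_{\Omega_2}\bar V<1$ while simultaneously setting $\bar V\equiv1$ on a large bounded set $D\supset\supp u_0$; those two requirements are incompatible. If one corrects this to $\sup_{\Omega_2\setminus D}\bar V<1$, two further problems remain. First, the comparison $u\le\bar V$ then controls $u$ only outside $D$, so it does not by itself rule out $\limsup_t\sup_D u=1$; one still needs an argument inside $D$. Second, and more seriously, the \cite{BHM} solution $w$ of \eqref{syst block} satisfies $w=1$ on $\partial B_1$, and the natural periodic supersolution built from it (translate $w$ around each obstacle and extend by $1$ elsewhere, as the paper does) \emph{equals $1$ between consecutive obstacles}; it is not bounded away from $1$ outside $D$. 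To get a supersolution that transitions from $1$ near $D$ down to a value like $\theta$ far away, as you propose, you would need an entirely new construction: a $1$D computation shows that a monotone supersolution going from $\theta$ to $1$ is obstructed by the unbalanced condition $\int_\theta^1 f>0$, and \cite{BHM} does not furnish such an object. The paper instead lives with a supersolution $\tilde w$ that touches $1$ on a set of positive measure in every cell, and extracts blocking via a translation--compactness argument: if $u(t_n,x_n)\to1$, renormalize by the period lattice, pass to an entire limit $u_\infty$, conclude $u_\infty\equiv1$ by the strong maximum principle and Hopf, and contradict the fact that $\tilde w(\cdot+k)$ has a uniform dip $\inf w<1$ in every cell. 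Some such compactness step appears unavoidable and is missing from your outline.

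\medskip

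\emph{Assertion $(ii)$.} The unconstrained minimizer of $\int_{\mathcal C}\big(\tfrac12|\nabla v|^2-F(v)\big)$ over $H^1$ on the periodicity cell is the constant $1$ (since $F$ is maximized at $1$), so the variational route you sketch does not produce a useful periodic subsolution. Moreover, it is not explained why the $\omega$-limit $V_\ast$ of the solution emanating from a \emph{compactly supported} subsolution should coincide with ``the minimal stationary solution above a periodic subsolution.'' The paper's argument avoids both issues: it proves periodicity of the limit $p$ directly by sliding the compactly supported bump $u_R(\cdot-z)$ through $\Omega_2$ to get $p\ge u_R(\cdot-z-k)$ for every lattice vector $k$, and then uses comparison of the evolution starting from $u_R(\cdot-z-k)$ (whose limit is $p(\cdot-k)$ by periodic invariance of the equation) to obtain $p(\cdot-k)\le p$ for all $k$, hence $p(\cdot-k)\equiv p$.

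\medskip

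\emph{Assertion $(iii)$.} Ruling out $c>0$ by truncation and comparison with $(i)$ is correct. For $c<0$, the energy identity you invoke (multiply by $\partial_t v$ and integrate over a space--time period) requires careful handling of divergent integrals, telescoping in the direction $e$, and the boundary terms on $\partial\Omega_2$; it is plausible but not an off-the-shelf fact in this periodic-domain setting. The paper's argument is both simpler and more robust: statement $(ii)$ provides a compactly supported datum $<1$ whose solution converges to a positive periodic steady state, and fitting that datum under the time-$0$ profile of a putative front with $c\le0$ gives an immediate contradiction. The stationary case $c=0$ and failure of invasion for general front-like data are handled in the paper by the same translated supersolution $w(\cdot-k)$, which is essentially what you suggest, so that part is fine modulo the issues from $(i)$.
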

The failure of the invasion property  is in strong contrast with the homogeneous case $\O = \R^{N}$, where 
 such property is guaranteed by the
 condition $\int_{0}^{1} f(s)ds >0$, at least for large enough initial data.
We point out that 
Theorem~\ref{th blocking bistable} part $(ii)$ implies the existence of
an intermediate periodic steady state between $0$ and~$1$ 
which is {\em stable from below}.
So, it turns out that the geometry of $\O_2$ alters the bistable character of the 
nonlinearity~$f$
making some non-trivial stable steady states appear.
This is the reason why there are no fronts in such case, but one should rather 
expect the existence of {\em propagating terraces} instead, see \cite{Ducrot,GR}.

We finally construct a domain $\Omega_{3}$ which exhibits a new phenomenon that 
we call \emph{oriented invasion}. Namely, invasion occurs in a direction 
$e$, with 
a positive linear speed, whereas the propagation is blocked in the opposite direction $-e$.
We state and prove the theorem in $\mathbb{R}^{2}$, but it can  be easily generalized to higher dimensions. 
We  let $(e_1, e_2)$ denote the unit vectors of the canonical basis of $\mathbb{R}^{2}$.

\begin{theorem}\label{th oriented}
Let $f$ satisfy \eqref{unbalanced}. 
There exists a periodic domain $\Omega_{3}\subset \mathbb{R}^{2}$ and a positive constant $c$ such that, for every $\eta>\theta$, there is $r>0$ for which
the following properties hold for every solution to \eqref{evol homogeneous} arising from a
compactly supported initial datum satisfying 
$$0\leq u_0\leq 1,\qquad
u_0>\eta\quad \text{ in }\ \O\cap B_r.$$
\begin{enumerate}[$(i)$]
\item\emph{Invasion in the direction $e_1$:}
$$\forall 0<c_1<c_2<c,\ \forall a>0,\quad
	\min_{\substack{x\in\ol\O_{3} \\ c_1 t\leq x\cdot e_{1}\leq c_2 t\\ \vert x\cdot e_{2} \vert \leq a}}
	|u(t,x)-1|  \underset{t \to +\infty}{\longrightarrow} 0.$$

\item\emph{Blocking in the direction $-e_1$:}
$$
u(t,x) 
\underset{x\cdot e_{1} \to -\infty}{\longrightarrow} 0\quad
\text{uniformly in }t>0.
$$
\end{enumerate}
\end{theorem}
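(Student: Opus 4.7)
\textbf{Proof plan for Theorem \ref{th oriented}.}

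The plan is to exhibit an explicit periodic domain $\Omega_3 \subset \mathbb{R}^2$ whose geometry breaks the $e_1 \leftrightarrow -e_1$ symmetry in such a way that the bistable dynamics can propagate in direction $e_1$ but is quenched in direction $-e_1$. The construction should combine the invasion mechanism of Theorem~\ref{th invasion bistable} with the blocking mechanism of Theorem~\ref{th blocking bistable}. A natural candidate is a domain composed of a straight ``highway'' strip extending along the $e_1$-axis (wide enough to support an unbalanced bistable front of positive speed) to which are attached, on the $-e_1$ side of each period, ``blocking pockets'' accessible only through narrow Berestycki--Hamel--Matano-type bottlenecks; the pockets are arranged so that a solution attempting to travel in direction $-e_1$ is forced to cross infinitely many bottlenecks, while a solution invading in direction $+e_1$ remains essentially in the highway.

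\textbf{Part 1: invasion in direction $e_1$ with positive speed.} First I would establish the existence of a pulsating traveling front $v$ of speed $c>0$ connecting $1$ to $0$ in direction $e_1$ for the bistable equation restricted to an appropriate periodic substructure of $\Omega_3$ containing the highway (classical existence results for unbalanced bistable fronts in straight cylinders or wide periodic strips apply here, using $\int_0^1 f > 0$ from \eqref{unbalanced}). Then, following the strategy of Theorem~\ref{inv ptf}, I would use $v$ (suitably shifted in time and translated to sit in the highway) as a subsolution for the Cauchy problem on $\Omega_3$. For $r$ large enough, an initial datum $u_0 > \eta$ on $\Omega_3 \cap B_r$ dominates $v(0,\cdot)$ on a large portion of its level $\{v \ge \eta'\}$, and the parabolic comparison principle gives $u(t+t_0,x) \ge v(t,x)$ for all large $t$. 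Letting $v$ sweep across the highway then yields the spreading estimate at every speed below $c$, in every slab $|x\cdot e_2|\le a$.

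\textbf{Part 2: blocking in direction $-e_1$.} The aim is to construct a \emph{stationary} supersolution $\bar u$ of \eqref{evol homogeneous} on $\Omega_3$, $L_2$-periodic in $e_2$, with $\bar u(x) \to 0$ as $x\cdot e_1 \to -\infty$, and dominating $u_0$. The BHM construction used in Theorem~\ref{th blocking bistable} produces, across each bottleneck, a local stationary supersolution that transitions from near $1$ on the $+e_1$ side of the bottleneck to near $0$ on the $-e_1$ side; iterating this procedure period by period and gluing the local supersolutions together (extending by a constant $\ge \sup u_0$ on the $+e_1$ side) yields the desired global $\bar u$. Since $\mathrm{supp}(u_0)$ is compact, choosing the gluing constant large enough ensures $u_0 \le \bar u$, and the parabolic comparison principle then gives $u(t,\cdot)\le \bar u$ for all $t>0$, from which uniform decay $u(t,x)\to 0$ as $x\cdot e_1 \to -\infty$ follows.

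\textbf{Main obstacle.} The difficulty is to design $\Omega_3$ so that the two mechanisms do not interfere with each other. A priori, the bottleneck pockets needed to block $-e_1$-propagation could also slow down or prevent the $+e_1$-invasion, while the wide highway supporting the forward front could provide a ``leak'' through which the backward wave escapes the blocking barrier. The heart of the argument is therefore a careful geometric arrangement in which (a)~the highway is sufficiently separated from the pockets that the PTF subsolution of Part~1 remains a subsolution on $\Omega_3$, and (b)~every trajectory going in direction $-e_1$ must traverse the bottlenecks, so that the supersolution of Part~2 can be propagated all the way to $-\infty$ along $e_1$. A secondary technical point is to verify that the glued supersolution $\bar u$ is genuinely $C^1$ across the interfaces (or to argue in the viscosity/weak sense), which is the standard concatenation trick based on taking the minimum of slightly shifted local supersolutions where they overlap.
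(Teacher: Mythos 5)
Your plan correctly identifies the high-level strategy (an $e_1\!\leftrightarrow\!-e_1$-asymmetric periodic domain, a sweeping subsolution for invasion, a stationary barrier supersolution for blocking) and the main obstacle (the two mechanisms must coexist). However, there are genuine gaps in both parts.

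For invasion, you propose to produce a pulsating traveling front for the bistable equation on a periodic strip and use it as a subsolution. This is not available: the existence of pulsating fronts for general bistable nonlinearities in periodic domains other than $\R^N$ is an open problem (the paper stresses exactly this), and moreover the domain you would build must \emph{not} admit a pulsating front in direction $e_1$ --- if one with speed $c>0$ existed it would converge to $1$ as $x\cdot e_1\to-\infty$, contradicting blocking (this is precisely the phenomenon Remark~\ref{rem:single} exploits). The paper circumvents the PTF issue entirely: for the cylinder warm-up (Theorem~\ref{OI cylinder}) it bends the one-dimensional Aronson--Weinberger front, $\psi(t,x_1,x_2)=\phi(x_1+\eta|x_2|^2-ct)$, and checks the Neumann condition directly using the gentle slope $0\le \upsilon'\le 2/(L-4)$; for the full periodic domain (Theorem~\ref{th oriented}) it abandons moving fronts altogether and uses the compactly supported radial subsolutions $u_R(\cdot-z)$, slid along a carefully designed set $\mc S$ (Lemma~\ref{slidable}) and ``jumped'' through the narrow passage via the monotone radial solution $v_{R'}$ of a Dirichlet problem on a larger ball (Lemma~\ref{lem:vR}), then iterated in time to get linear speed.

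Your geometry (``highway'' plus side ``pockets'') is also problematic: if the highway strip is free of bottlenecks, nothing stops leftward propagation along it; if the bottlenecks are in the highway, the side pockets are superfluous. The paper instead places a single bottleneck per period directly across the domain, with $\partial K$ parametrized by a function $h$ that widens slowly to the right ($h'\le\e^2$) and abruptly on the left, so that the same constriction that blocks $-e_1$-propagation can be crossed in direction $+e_1$ by the sliding/jumping subsolutions. Finally, for the blocking step you describe gluing local BHM-type supersolutions period by period; the paper instead applies once the Berestycki--Bouhours--Chapuisat result (Proposition~\ref{prop bbc}) on the cylinder $\tilde\O_3=\O_3\cap(\R\times(0,L_2))$, which already produces a single supersolution on the whole half-infinite cylinder decaying to $0$ as $x_1\to-\infty$, and then extends it $L_2$-periodically in $e_2$ and by $1$ to the right. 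The BHM exterior-domain solution $w$ of \eqref{syst block} equals $1$ on $\partial B_1$ but does not by itself transition to $0$; the iterated-gluing picture you sketch is not automatically monotone from one bottleneck to the next, and that monotonicity is precisely what the BBC energy minimization on truncated cylinders delivers.
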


Theorem~\ref{th oriented} provides an example of a periodic domain on which
invasion takes place in an \emph{asymmetric} way. 
In the KPP case, it turns out that the speed of invasion
in a given direction and in the opposite one do coincide. 
%The validity of such property
%for other reaction terms was an open question. 
Our result provides a counter-example to this fact in the bistable case.

\begin{remark}\label{rem:single}
From statement $(i)$ of Theorem \ref{th oriented}, it is easy to deduce that the invasion property is verified for any initial datum which is front-like in the direction $e_1$, i.e., satisfying~\eqref{def f l} with $e=e_1$.
This shows that the invasion property for a single front-like initial datum 
does not suffice to guarantee that invasion occurs for ``large enough" compactly supported data. 
Namely, in Theorem \ref{invasion general}, one cannot weaken property $(i)$ by the 
existence of one single front-like initial datum for which invasion occurs.
The possible extension of the theorem would be by assuming that 
in any direction there is a front-like initial datum for which  invasion occurs.
We leave it as an open question.
\end{remark}

The proofs of Theorems \ref{th invasion bistable} and \ref{th blocking bistable}
are given in Sections \ref{invasion} and \ref{blocking} respectively.
Section \ref{Oriented invasion} is dedicated to the proof of Theorem \ref{th oriented}.
%and relies on some arguments from \cite[Theorem 1]{BHM} and \cite[Theorem 1.12]{BBC}.

%As we mentioned already, in the bistable case in domains that are not $\mathbb{R}^{N}$, results concerning the existence of fronts are not known, hence Theorem \ref{invasion general} cannot be applied to prove that invasion occurs. We give a geometrical condition on the periodic domain $\Omega$ to ensure that invasion occurs.

\section{Persistence}\label{sec:persistence}
This section is devoted to the proof of the persistence result, Theorem \ref{th persistence}. In this whole section, we assume that $q\equiv 0$ and that $f$ satisfies \eqref{mean positive}. The proof relies on the study of the stationary problem

\Fi{stationary}
\begin{cases}
\nabla \cdot (A(x) \nabla u)+  f(x,u)=0, & x\in\O\\
\nu \cdot A(x)\nabla u=0, & x\in\partial\O.
\end{cases}
\Ff
The main tool is the construction of a family of solutions in truncated domains. This will be achieved using an energy method, in the same spirit of Berestycki, Lions \cite{BL}, where the authors study the existence of positive solutions for homogeneous bistable equations in the whole space $\mathbb{R}^{N}$.

We consider the primitive of $f$, defined by  
$$
F(x,s) := \int_{0}^{s} f(x,\sigma)d\sigma.
$$
For $r>0$, we introduce 
the energy functional $\mc{E}_{r}$ associated with \eqref{stationary} in the truncated domain
$\O\cap B_r$:
$$\mc{E}_r(\phi):=\int_{\O\cap B_r}\left(\frac12 (A(x)\nabla\phi, \nabla \phi)-F(x,\phi)\right),$$
acting on the space $H^1_0(\ol\O\cap B_r)$.
%{do we need a connected set?}
We study the existence of minimizers for this energy. In order to do so, we first derive a geometrical lemma ensuring that 
$\O$ and $B_r$ are not tangent for a.e.~$r>0$. We recall that $\nu(x)$ stands for the exterior normal derivative at point $x\in \partial \O$.
\begin{lemma}\label{lem:nottangent}
For a.e.~$r>0$, there holds 
\Fi{nonparallel}
\forall x\in (\partial\O)\cap\partial B_r,\quad
\nu(x)\neq\pm\frac{x}{|x|}.
\Ff
\end{lemma}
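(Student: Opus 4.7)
The plan is to recognize \eqref{nonparallel} as the statement that $r^{2}$ is a regular value of the squared-distance function restricted to $\partial\O$, and then invoke Sard's theorem.

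Concretely, I would introduce the map $g:\partial\O\to[0,+\infty)$ defined by $g(x)=|x|^{2}$. Under the standing $C^{3}$ hypothesis on $\partial\O$, $g$ is a $C^{3}$ function on the $(N-1)$-dimensional $C^{3}$ manifold $\partial\O$. The ambient gradient is $\nabla g(x)=2x$, so the tangential gradient of $g|_{\partial\O}$ at a point $x\neq 0$ vanishes precisely when $2x$ is parallel to $\nu(x)$, i.e.\ when $\nu(x)=\pm x/|x|$. Hence the critical set of $g$ on $\partial\O\setminus\{0\}$ is exactly
$$
\Sigma:=\{x\in\partial\O\setminus\{0\}\ :\ \nu(x)=\pm x/|x|\},
$$
and \eqref{nonparallel} fails for a given $r>0$ if and only if there is some $x\in\Sigma$ with $|x|=r$, i.e.\ $r^{2}\in g(\Sigma)$.

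Sard's theorem applied to the real-valued $C^{3}$ function $g$ on the manifold $\partial\O$ then yields that $g(\Sigma)\subset\R$ has Lebesgue measure zero. Since $s\mapsto\sqrt{s}$ preserves null sets, the set of radii $r>0$ at which \eqref{nonparallel} fails is a null subset of $(0,+\infty)$, which is the claim.

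The only technical concern is the regularity required in Sard's theorem: in its classical form for $C^{k}$ real-valued functions on an $n$-manifold one needs $k\geq n$, which with $k=3$ and $n=N-1$ is automatic for $N\leq 4$ but borderline in higher dimensions. This can be handled either by a standard $C^{3}$-approximation of $\partial\O$ by smoother hypersurfaces combined with a limiting argument, or by invoking a refined Morse--Sard statement; the structure of the proof, namely the identification of the bad radii as the image under a smooth function of its own critical set, is unchanged. I expect this regularity bookkeeping to be the only delicate point; the geometric identification of the critical set is straightforward.
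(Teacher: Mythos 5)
Your proposal is correct and reaches the same conclusion via a related but structurally different route. The paper does not work intrinsically on the manifold $\partial\O$: it introduces a regularised signed distance $d$ from $\partial\O$ and applies the Morse--Sard theorem to the two ambient-space functions $\rho_\pm(x):=|x|\pm d(x)$, defined on $\R^N\setminus\{0\}$. On $\partial\O$ one has $\rho_\pm = |x|$ and $\nabla\rho_\pm = \tfrac{x}{|x|}\mp\nu$, so if $r$ is a regular value of both $\rho_+$ and $\rho_-$ (which holds for a.e.\ $r$), any $x\in\partial\O\cap\partial B_r$ satisfies $\nu(x)\neq\pm x/|x|$. Your version instead applies Sard directly to the single function $g=|x|^2$ (or equivalently $|x|$) on the $(N-1)$-manifold $\partial\O$, after correctly identifying its critical set as exactly the locus where $\nu(x)=\pm x/|x|$. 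This is more economical: there is no need to introduce the auxiliary distance function or to juggle two functions, and the Sard hypothesis concerns a $C^3$ function on an $(N-1)$-manifold ($C^{N-1}$ suffices) rather than on an open subset of $\R^N$ ($C^N$ needed), so your version is in fact one derivative cheaper. The regularity caveat you raise for large $N$ is legitimate and, if anything, bites harder in the paper's formulation than in yours; the paper does not address it. In short, the key idea (reduce the statement to ``a.e.\ value is a regular value'' and invoke Sard) is the same, but you realise it with a cleaner, intrinsic choice of function, and you are more careful about the smoothness hypothesis.
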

\begin{proof}
%It is sufficient to show that, for any $\e>0$, the property holds for a.e.~$r>\e$.
Let $d$ be a regularised signed distance from $\partial\O$, that is, 
a smooth function on $\R^N$ coinciding with the signed distance from 
$\partial\O$ in a neighbourhood of $\partial\O$, positive inside $\O$.
Consider the pair of functions
$$\rho_\pm(x):=|x|\pm d(x).$$
This functions are smooth outside the origin. It follows from the Morse-Sard
theorem~\cite{Morse-Sard} that the inverse images $\rho_\pm^{-1}(r)$ do not 
contain 
any critical points of $\rho_\pm$, except for $r$ belonging respectively
to some sets $\mc{R}_{\pm}$ with zero Lebesgue measure.
Hence, for $r\in\R^+\setminus(\mc{R}_-\cup\mc{R}_+)$, that is, for a.e.~$r>0$,
any $x\in \partial\O \cap \partial B_r$ satisfies
$\rho_\pm(x)=|x|=r$ and thus, for such $x$, we have
$$0\neq\nabla\rho_\pm(x)=\frac{x}{|x|}\mp\nu(x).$$
\end{proof}

For all $r$ for which \eqref{nonparallel} holds, the set $\O\cap B_r$ satisfies an interior and exterior cone condition. Also, it is not hard to check that, for all $r$ for which \eqref{nonparallel} holds, the set $\O\cap B_r$
has a finite number of connected components
%{details?}.
%%% Details : write the boundary as a graph + scaling around the contact point

\begin{proposition}\label{pro:minimiser}
For all $r>0$ such that \eqref{nonparallel} holds, the functional $\mc{E}_r$ admits a global minimiser $\ul u_r \in H^{1}_{0}(\overline{\O} \cap B_{r})$ such that $0 \leq \ul u_{r} \leq 1$ a.e. in $\Omega\cap B_{r}$.
\end{proposition}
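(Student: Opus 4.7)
My approach is the direct method of the calculus of variations, applied to the closed convex set $\mc{K} := \{\phi \in H^1_0(\ol\O\cap B_r) : 0 \leq \phi \leq 1 \text{ a.e.}\}$. By Lemma~\ref{lem:nottangent} and the interior/exterior cone condition that follows, $\O\cap B_r$ has Lipschitz boundary and finitely many connected components, so that Rellich--Kondrachov compactness is available in $H^1_0(\ol\O\cap B_r)$.

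Step one is to reduce to minimizing over $\mc{K}$ via the truncation inequality $\mc{E}_r(\min(\max(\phi,0),1))\leq \mc{E}_r(\phi)$. The gradient term can only decrease under truncation. Since $f<0$ beyond $1$, the primitive $F(x,\cdot)$ is decreasing on $(1,+\infty)$, making the upper truncation energy-favorable. The lower truncation at $0$ is the delicate point, and in my view the main technical obstacle of the proof: because $F(x,s)>0$ for $s<0$, naively replacing $\phi$ by $\max(\phi,0)$ raises the potential contribution on $\{\phi<0\}$, and one must either arrange the Lipschitz extension of $f$ below $0$ so that the gradient decrease dominates, or else prove the lower bound $\ul u_r\geq 0$ a~posteriori by testing the Euler--Lagrange equation satisfied by the constrained minimizer against $(\ul u_r)^-$, in a Stampacchia-type argument exploiting the sign $f(x,\ul u_r)\ul u_r\geq 0$ on $\{\ul u_r<0\}$.

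Step two is the direct method itself on $\mc{K}$. Since $F$ is bounded on $\ol\O\times[0,1]$ (as $f$ is globally Lipschitz with $f(x,0)=0$), we obtain $\mc{E}_r(\phi)\geq \frac{\lambda}{2}\int_{\O\cap B_r}|\nabla\phi|^2-C|\O\cap B_r|$ on $\mc{K}$, giving boundedness below and coercivity in $\|\nabla\phi\|_{L^2}$. Any minimizing sequence $(\phi_n)\subset\mc{K}$ is thus bounded in $H^1_0$; along a subsequence, it converges weakly in $H^1_0$ and strongly in $L^2(\O\cap B_r)$ by Rellich--Kondrachov to some $\ul u_r$, with $0\leq\ul u_r\leq 1$ a.e.\ by passing the pointwise constraint to the $L^2$ limit. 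For weak lower semicontinuity, the gradient quadratic form is convex and continuous on $H^1_0$, hence weakly LSC, while $\int F(x,\phi_n)\to \int F(x,\ul u_r)$ follows from strong $L^2$ convergence together with the uniform Lipschitz-continuity of $F(x,\cdot)$. Combining these yields $\mc{E}_r(\ul u_r)\leq\liminf_n\mc{E}_r(\phi_n)$, and $\ul u_r$ is the desired minimizer.
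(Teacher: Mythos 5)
Your direct-method formulation on the closed convex set $\mc{K}$ is a valid alternative to what the paper does: you use weak lower semicontinuity of the gradient quadratic form (convexity) plus strong continuity of the potential term under $L^2$-convergence, whereas the paper shows the truncated minimizing sequence is strongly Cauchy in $H^1_0$ via the parallelogram-type inequality and then invokes continuity of $\mc{E}_r$. Both deliver existence of a minimizer over $\mc{K}$; yours is the more textbook route, and your use of the Lipschitz boundary/Sobolev extension for Rellich--Kondrachov matches the paper.

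Your diagnosis of the lower truncation as the delicate point is correct and, in fact, exposes a real gap in the paper's own one-liner ``Likewise, we can take $\essinf u_n\geq 0$''. With $f$ extended strictly negatively below $0$, one has $F(x,s)=-\int_s^0 f(x,\sigma)\,d\sigma>0$ for $s<0$, so replacing $\phi$ by $\phi^+$ increases $-\int_{\{\phi<0\}}F(x,\phi)$ from a negative quantity to $0$, while the gradient contribution decreases; the comparison is indeterminate, and if the extension grows too fast $\mc{E}_r$ need not even be bounded below on all of $H^1_0(\ol\O\cap B_r)$.

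However, neither of your proposed remedies closes this gap as stated. The Stampacchia argument has the sign backwards: testing the Euler--Lagrange equation with $(\ul u_r)^-$ gives
$$-\int_{\{\ul u_r<0\}}A\nabla\ul u_r\cdot\nabla\ul u_r \;=\;\int_{\{\ul u_r<0\}}f(x,\ul u_r)(\ul u_r)^-,$$
and under the sign $f(x,s)s\geq 0$ for $s<0$ (equivalently $f\leq 0$ below $0$) the right-hand side is $\leq 0$ just like the left-hand side, so the identity is consistent and forces nothing. It is the \emph{opposite} regime $f(x,s)\geq 0$ for $s<0$ that makes the left-hand side $\leq 0$ and the right-hand side $\geq 0$, hence both vanish and $(\ul u_r)^-\equiv 0$. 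Your first suggestion, that ``the gradient decrease dominates'', is also not the right framing, since the gradient decrease depends only on $\phi$ and cannot be tuned by the choice of $f$.

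The correct fix resolves both issues at once: extend $f(x,s)$ by a \emph{nonnegative} function for $s<0$ (for instance $f(x,s)\equiv 0$ there, which preserves global Lipschitz continuity since $f(x,0)=0$). Then $F(x,s)\leq 0$ for $s<0$, the lower truncation unambiguously decreases $\mc{E}_r$, $\mc{E}_r$ is bounded below, and the Stampacchia sign also becomes favorable (though by then it is redundant). With that modification your Step one is complete and your Step two produces a genuine global minimizer lying in $\mc{K}$.
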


\begin{proof}
First, observe that, because we have extended $f(x,s)$ by a negative function for $s>1$, we have
$$
\forall x\in \ol\O, \ \forall s \geq 0, \quad F(x,s) \leq \int_{0}^{\min\{ s, 1\}}f(x,\sigma)d\sigma \leq  \max_{\overline{\Omega} \times [0,1]}f.
$$
We deduce that
$$\mathcal{E}_{r}\geq-  \max_{\overline{\Omega} \times [0,1]}f |B_r|,$$
that is, $\mathcal{E}_{r}$ is bounded from below. Consider a minimizing sequence $(u_{n})_{n\in \mathbb{N}}$ of elements of $H_{0}^{1}(\ol\O\cap B_{r})$. 
We can assume without loss of generality
that $\esssup u_{n} \leq 1$, because, defining $v_{n}=\min(u_{n},1)$, we have that $v_{n}\in H^{1}_0(\ol \O \cap B_{r})$ and $\mathcal{E}_{r}(v_{n}) \leq \mathcal{E}_{r}(u_{n})$ 
again because $f(x,s)<0$ for $s>1$. Likewise, we can take $\essinf u_{n} \geq 0$.
Let us check that the sequence  $\seq{u}$ is bounded in $H^1(\O\cap B_r) $. Indeed, 
on the one hand, the sequence is bounded in $L^2(\O\cap B_r) $,
and on the other hand, remembering that $\lambda$ denotes the ellipticity constant of $A$,
$$\frac{1}{2}\lambda \|\nabla u_n\|^{2}_{L^{2}(\O \cap B_{r})}  - \max_{\ol\O \times [0,1]} f\vert B_{r} \vert    \leq
\mathcal{E}_{r}(u_{n}).$$
Now, we cannot directly apply the Rellich-Kondrachov theorem to the sequence $(u_{n})_{n\in \mathbb{N}}$ because $\Omega \cap B_{r}$ is not necessarily
smooth. However, it is a Lipschitz domain thanks to \eqref{nonparallel}, hence we can apply the usual Sobolev extension theorem (see  \cite[Section 6]{S} or \cite{C}) for $(u_{n})_{n\in \mathbb{N}}$ and then apply the Rellich-Kondrachov theorem to the sequence of extended functions, getting then that, up to extraction, $(u_{n})_{n\in \mathbb{N}}$ converges in the $L^{2}$ norm (up to subsequences) to some $u\in L^{2}(\O\cap B_{r})$.

Let us show that the convergence actually holds in $H_{0}^{1}(\ol \O\cap B_{r})$. 
To do so, we show that $\seq{u}$ is a Cauchy sequence in this space. For $m,n\in \mathbb{N}$, we have
\begin{equation*}
\begin{array}{rl}
\inf_{H^{1}_{0}(\ol\O \cap B_{r})} \mc{E}_{r} &\leq \mc{E}_{r}\Big( \frac{u_{m}+u_{n}}{2} \Big) \\
&\leq \frac{1}{2}\mc{E}_{r}(u_{m})+\frac{1}{2}\mc{E}_{r}(u_{n})-\frac{\lambda}{4}\| \nabla (u_{n}-u_{m}) \|^{2}_{L^{2}( \O \cap B_{r})} \\ 
&\quad + \frac{1}{2}\int_{\O \cap B_{r}}
\Big(F(x,u_{m})+F(x,u_{n})-2F\Big( x, \frac{u_{m}+u_{n}}{2} \Big)\Big).
\end{array}
\end{equation*}
Using the fact that $s \mapsto F(x,s)$ is Lipschitz-continuous, uniformly in $x$, and that $\seq{u}$ is a Cauchy sequence in $L^2(\O\cap B_r)$, and therefore in $L^1(\O\cap B_r)$, we
see that the above integral goes to $0$ as $m,n$ go to $+\infty$. On the other hand,
$\frac{1}{2}\mc{E}_{r}(u_{m})+\frac{1}{2}\mc{E}_{r}(u_{m})\to\inf_{H^{1}_{0}(\ol\O \cap B_{r})} \mc{E}_{r}$ as $m,n$ go to $+\infty$. It follows that
$\seq{\nabla u}$ is a Cauchy sequence in $L^{2}(\O \cap B_{r})$.
Now, by continuity of $\mc{E}_{r}$ in $H^{1}_{0}(\ol\O\cap B_r)$,
we conclude that the limit $u$ is a global minimiser
for~$\mc{E}_{r}$. Finall, the fact that $0\leq u \leq 1$ a.e. follows from the same argument as before.
\end{proof}
 We know that $\ul u_r$ satisfies the Euler-Lagrange equation associated with $\mc{E}_r$, 
together with mixed boundary conditions. Namely, it is a solution of the problem 
\eqref{stationary} inside $B_r$ and vanishes on $\O\cap\partial B_r$ in the sense of the trace.
However, at this stage, we cannot exclude that the minimizer $\ul u_{r}$ is the trivial solution identically equal to zero. Owing to Lemma \ref{lem:nottangent}, we can infer that $\ul u_r$ is continuous on $\ol{\O\cap B_r}$, as shown in the following.

\begin{lemma}\label{lem:ur}
	If \eqref{nonparallel} holds then $\ul u_r\in C^0(\ol{\O\cap B_r})$
	and $\max_{\ol{\O\cap B_r}}\ul u_r<1$.	
\end{lemma}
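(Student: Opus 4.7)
My plan is two-fold: first, use elliptic regularity theory on the Lipschitz domain $\O\cap B_r$ to upgrade $\ul u_r$ to a continuous function up to the boundary; second, invoke the strong maximum principle and Hopf's lemma to rule out the value $1$. Recall that by the Euler--Lagrange equation, $\ul u_r$ is a bounded weak solution of $\nabla\cdot(A\nabla \ul u_r)+f(x,\ul u_r)=0$ in $\O\cap B_r$, with vanishing conormal derivative on the ``Neumann portion'' $\Sigma_N:=(\partial\O)\cap B_r$ and vanishing trace on the ``Dirichlet portion'' $\Sigma_D:=\O\cap\partial B_r$. Since $\ul u_r\in L^\infty$ and $f$ is Lipschitz, Calder\'on--Zygmund bootstrapping gives $\ul u_r\in C^{2,\alpha}_{\mathrm{loc}}(\O\cap B_r)$, while boundary Schauder estimates yield $C^{2,\alpha}$-regularity up to the interior of each of $\Sigma_N$ and $\Sigma_D$ separately. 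The subtle point is the ``edge set'' $(\partial\O)\cap(\partial B_r)$, where $\O\cap B_r$ is merely Lipschitz; condition~\eqref{nonparallel} ensures that the two boundary pieces meet transversally, so $\O\cap B_r$ satisfies a uniform cone condition there, and a De Giorgi--Nash--Moser type H\"older estimate for mixed boundary-value problems on Lipschitz domains delivers continuity of $\ul u_r$ up to $\overline{\O\cap B_r}$.

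For the strict bound $\max\ul u_r<1$, I argue by contradiction: assume $\ul u_r(x_0)=1$ for some $x_0\in\overline{\O\cap B_r}$. Continuity together with the vanishing of $\ul u_r$ on $\Sigma_D$ force $x_0\in(\O\cap B_r)\cup\Sigma_N$. The function $w:=1-\ul u_r\geq 0$ satisfies a linear equation of the form $\nabla\cdot(A\nabla w)+c(x)w=0$ with $c\in L^\infty$ (using $f(x,1)=0$ and the Lipschitz property of $f$), vanishing conormal derivative on $\Sigma_N$, and $w(x_0)=0$. If $x_0$ is interior, the strong maximum principle forces $w\equiv 0$ on the connected component $V$ of $\O\cap B_r$ containing $x_0$; if $x_0\in\Sigma_N$, Hopf's lemma at $x_0$, combined with the vanishing conormal derivative, yields the same conclusion on the component adjacent to $x_0$. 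In either case $\ul u_r\equiv 1$ on the whole component $V$, and by continuity $\overline V$ must then be disjoint from $\Sigma_D$, forcing $V$ to be a bounded connected component of $\O$ entirely contained in $B_r$; since in the setting of the paper $\O$ has no bounded connected component, this is the desired contradiction.

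I expect the main obstacle to be the boundary continuity at the corner set $(\partial\O)\cap(\partial B_r)$: each of the two boundary pieces is individually smooth, but their union is only Lipschitz, and the mixed Dirichlet--Neumann data preclude a direct application of classical smooth-boundary Schauder theory. This is precisely where condition~\eqref{nonparallel}, secured by Lemma~\ref{lem:nottangent}, plays its role, and where a little care with the reference for Lipschitz-domain regularity is needed. All the other steps---interior regularity, boundary regularity on the smooth portions, and the strong maximum principle / Hopf argument---are standard once continuity up to $\overline{\O\cap B_r}$ has been secured.
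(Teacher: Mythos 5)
Your proof is correct and follows essentially the same route as the paper's. For the continuity up to the boundary you invoke a De~Giorgi--Nash--Moser / mixed-boundary H\"older estimate on the Lipschitz domain $\O\cap B_r$ (guaranteed by \eqref{nonparallel}); the paper does exactly this, citing Stampacchia \cite[Theorem 14.5]{Sta}. For the strict bound, both arguments pass to $w=1-\ul u_r$, linearize via the Lipschitz property and $f(\.,1)=0$, rule out a maximum of $\ul u_r$ on the Neumann portion of the boundary by Hopf's lemma, and then use the strong maximum principle to force $\ul u_r\equiv 1$ on the connected component. The only cosmetic difference is in how the final contradiction is phrased: the paper observes that the closure of the component must meet $\O\cap\partial B_r$ (by connectedness and unboundedness of $\O$) and reads off $0=\ul u_r=1$ there, whereas you argue that the closure of the component must avoid $\O\cap\partial B_r$, forcing it to be a bounded open-and-closed subset of the connected unbounded $\O$ --- the same topological fact used in the other direction.
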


\begin{proof}
	Let $r$ be such that \eqref{nonparallel} holds. Then, the  weak bounded solution $\ul u_{r}$ is actually continuous up to the boundary of $\Omega\cap B_{r}$, see \cite[Theorem 14.5]{Sta}.
%	$$ ...\quad \ul u_r\in C^0(\ol{\O\cap B_r})\quad ...$$
%	$$\text{use \cite[Theorem 8.30]{GT}}$$
Because $\ul u_r$ is continuous and vanishes on $\O\cap\partial B_r$, it attains
its maximum~$M \leq 1$ at some $\bar x\in\ol\O\cap B_r$. 
%We necessarily have that $M\leq 1$ because otherwise we would get the contradiction $\mathcal{E}_{r}(\min(\ul u_{r},1)) < \mathcal{E}_{r}(\ul u_{r})$. 
Assume by contradiction that $M= 1$. 
The function $v:=M-\ul u_r$ is non-negative, vanishes at $\bar x$
and satisfies $\nabla (A(x) \nabla v )=f(x,M-v)$ in $\O\cap B_r$.
Because $s \mapsto f(x,s)$ is Lipschitz-continuous uniformly in $x$,
% and $f(x,s)<0$ for $s > 1 $,  ICI, ce on devrait dire qu'on étend (juste ici) f par 0 si s>1...
 we see that
$\nabla \cdot (A(x) \nabla v)=f(x,M-v)$ can be rewritten as a linear equation with bounded coefficients.
It follows from Hopf's lemma and $\nu A \nabla v=0$ on $(\partial\O)\cap B_r$ that 
$\bar x\notin\partial\O$. Thus,
the strong maximum principle implies that $v\equiv0$ on the connected component 
$\mc{O}$ of $\O\cap B_r$ containing $\bar x$. Observe that $\ol{\mc{O}}\cap\partial B_r
\neq\emptyset$ because $\O$ is connected. As a consequence, since $v$ is continuous up to the boundary, there exists $x\in \Omega \cap \partial B_r$
such that $v(x)=0$, i.e., $\ul u_{r}(x)=M$. We have reached a contradiction because $\underline{u}_{r}=0$ on $\Omega \cap \partial B_{r}$.
\end{proof}
%%%%%%%%%%%%%%%%%%%%%%%%%%%%%%%%%%%%%%%%%%%%%%%%%%%%%%%%%%%%%%%%%%%%%%%%%%%%%%%%%%%%%%%

The next lemma states that $\ul u_r \not\equiv 0$, provided $r>0$ is large enough. That is, we have built non-trivial solutions of \eqref{stationary} set on truncated domains.

\begin{lemma}\label{ur not 0}
Assume that $f$ satisfies \eqref{mean positive}. Then, there is $R^{\star}$ such that, for $r>R^{\star}$, there holds
$$
\ul u_r \not\equiv 0.
$$
\end{lemma}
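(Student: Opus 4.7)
The plan is to exploit minimality: since $\ul u_r$ is a global minimiser of $\mc{E}_r$ and $\mc{E}_r(0)=0$ (because $F(x,0)=0$), it suffices to exhibit, for all sufficiently large $r$, a competitor $\phi_r\in H^1_0(\ol{\O}\cap B_r)$ with $\mc{E}_r(\phi_r)<0$. Indeed, $\mc{E}_r(\ul u_r)\leq \mc{E}_r(\phi_r)<0=\mc{E}_r(0)$ then forces $\ul u_r\not\equiv 0$. The heuristic behind the choice of $\phi_r$ is that the potential contribution $-\int F(x,\phi_r)\,dx$ with $\phi_r\equiv 1$ is a bulk term growing like $-r^N$ with strictly negative leading coefficient thanks to \eqref{mean positive}, whereas the Dirichlet term $\int\frac{1}{2}(A\nabla\phi_r,\nabla\phi_r)\,dx$ can be confined to a thin shell near $\partial B_r$ and hence kept of order $r^{N-1}$.

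Concretely, I would take a smooth cutoff $\chi_r\in C^\infty(\R^N)$ with $\chi_r\equiv 1$ on $B_{r-1}$, $\chi_r\equiv 0$ outside $B_r$, $0\leq\chi_r\leq 1$ and $|\nabla\chi_r|\leq C$ (a constant independent of $r$), and set $\phi_r:=\chi_r|_{\ol\O\cap B_r}$. Its trace vanishes on $\O\cap\partial B_r$, which is the Dirichlet part of the boundary associated with $\mc{E}_r$, so $\phi_r\in H^1_0(\ol{\O}\cap B_r)$. The gradient contribution is supported in $\O\cap(B_r\setminus B_{r-1})$, whose volume is $O(r^{N-1})$, so by \eqref{A} it is bounded by $C_1 r^{N-1}$ with $C_1$ depending only on $N$ and $\Lambda$. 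For the potential term, I would split
$$\int_{\O\cap B_r}F(x,\phi_r)\,dx=\int_{\O\cap B_{r-1}}F(x,1)\,dx+\int_{\O\cap(B_r\setminus B_{r-1})}F(x,\phi_r)\,dx,$$
the second integral being $O(r^{N-1})$ because $F$ is bounded on $\ol\O\times[0,1]$.

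The heart of the argument is a periodicity/tiling estimate for the first integral: by counting the translates $\mc{C}+k$ ($k\in L_1\Z\times\cdots\times L_N\Z$) entirely contained in $B_{r-1}$ and controlling the error from the shell of cells intersecting $\partial B_{r-1}$ (a region of thickness at most $\max_i L_i$ and hence volume $O(r^{N-1})$), one obtains
$$\int_{\O\cap B_{r-1}}F(x,1)\,dx=\frac{|B_{r-1}|}{L_1\cdots L_N}\int_{\mc{C}}F(x,1)\,dx+O(r^{N-1}),$$
and the mean $\int_{\mc{C}}F(x,1)\,dx=\int_\mc{C}\int_0^1 f(x,s)\,ds\,dx$ is strictly positive by \eqref{mean positive}. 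Assembling the pieces yields
$$\mc{E}_r(\phi_r)\leq -c\, r^N+O(r^{N-1}) \as r\to+\infty,$$
with $c>0$. Choosing $R^\star$ large enough that the right-hand side is negative for every $r>R^\star$ satisfying \eqref{nonparallel} gives the conclusion. The only delicate step is the boundary-layer bookkeeping in this tiling estimate, but it is routine given that $F(\cdot,1)$ is bounded and the shell has the expected lower-order volume.
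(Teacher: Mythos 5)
Your proposal is correct and follows essentially the same route as the paper: both compare $\mc{E}_r(\ul u_r)$ with $\mc{E}_r(0)=0$ by testing the energy against a cutoff that is $1$ on $B_{r-1}$ and vanishes near $\partial B_r$, estimate the gradient contribution on the shell by $O(r^{N-1})$, and exploit periodicity to show the bulk potential term $-\int_{\O\cap B_{r-1}}F(x,1)$ contributes $-c\,r^N$ with $c>0$ by \eqref{mean positive}. The only cosmetic differences are your use of a smooth cutoff rather than the paper's piecewise-linear one, and your explicit cell-counting argument where the paper invokes the Ces\`aro-type convergence $\fint_{B_r}g\to\fint_{\text{cell}}g$ of averages of periodic functions.
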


\begin{proof}
	Since  $\ul u_r$ minimises $\mc{E}_r$, we can get an upper bound for
	$\mc{E}_r(\ul u_r)$ by estimating $\mc{E}_r(\phi_{r})$
	on a suitable function $\phi_{r}\in H^1_0(\ol\O\cap B_r)$.
	For $r > 1$, we define $\phi_{r}:\ol\O\cap B_r \to\R$ as follows:
	$$\phi_{r}(x)=\begin{cases}
	1 & \text{if }|x|\leq r-1,\\
	 r-\vert x \vert &\text{if }r-1 <|x|< r.
	\end{cases}$$
	Observe that $\vert \nabla \phi_{r} \vert \leq 1$.	
	We compute
	\[\begin{split}
	\mc{E}_r(\phi_{r}) &=
	\int_{\O\cap B_r}\left(\frac12(A\nabla\phi_{r})\.\nabla\phi_{r}-F(x,\phi_{r})\right)\\
%	&\leq\frac12\int_{\Omega \cap (B_r\setminus B_{r-\sqrt{r}})}(A\nabla\phi)\.\nabla\phi
%	-\int_{\O \cap B_{r-\sqrt{r}}}F(x,1) 
%	-\left(\min_{\ol\O\times[0,1]}F\right) \vert B_r \setminus B_{r-\sqrt{r}} \vert \\
	&\leq\frac12 \Lambda |B_r\setminus B_{r-1}|
	-\int_{\O \cap B_{r-1}}F(x,1)
	-\left(\min_{\ol\O\times[0,1]}F\right)\vert   B_r \setminus B_{r-1} \vert ,
	\end{split}\]
	where $\Lambda$ is given by \eqref{A}. We eventually infer the existence of a constant $C$ independent of $r$ such that
	$$
	\forall  r>1,\quad
	\mc{E}_r(\ul u_r)\leq C r^{N-1}-
	\int_{\O \cap B_{r-1}}F(x,1).
	$$
Observing that, for any measurable periodic function $g$ in $L^{1}_{loc}(\mathbb{R}^{N})$, we have $\fint_{B_{r}} g \to \fint_{[0,L_{1}]\times \ldots \times[0,L_{n}]} g$	as $r$ goes to $+\infty$, where $\fint$ stands for the integral average, we have
$$
|\O\cap B_r|\sim_{r \to +\infty}\frac{\vert\mc{C}\vert}{\prod L_{i}}|B_r|=\frac{\vert\mc{C}\vert}{\prod L_{i}}|B_1|r^N,
$$
and then 
$$
\frac{1}{r^{N}}\int_{\O \cap B_{r-1}}F(x,1)\underset{r \to +\infty}{\longrightarrow} \frac{\vert\mc{C}\vert}{\prod L_{i}}|B_1| \fint_{\mathcal{C}}F(x,1).
$$	
Then, because the latter term is positive by hypothesis \eqref{mean positive}, we have
	
$$
	\limsup_{r \to +\infty}\frac{\mc{E}_r(\ul u_r)}{r^{N}}\leq \limsup_{r \to +\infty} \left( C \frac{1}{r}-
	\frac{1}{r^{N}}\int_{\O \cap B_{r-1}}F(x,1)\right)<0,
$$
whence $\mc{E}_r(\ul u_r) <0$ if $r>0$ is sufficiently large. Therefore, $\ul u_{r}\not\equiv 0$ because $\mc{E}_r(0) =0$, and the result follows.
\end{proof}
Now, we can prove Theorem \ref{th persistence}.

\begin{proof}[Proof of Theorem \ref{th persistence}]
Thanks to Lemma \ref{ur not 0}, we can take $r>0$ such that $\ul u_r \not\equiv 0$. This function, extended by $0$ on $\Omega\backslash \overline{B}_{r}$, is a generalized subsolution of \eqref{evol}. Let $\underline{u}$ denote the solution of \eqref{evol} arising from such initial datum. Using the parabolic comparison principle, it is classical to get that $\underline{u}(t,x)$ is increasing with respect to $t$ and converges locally uniformly in $\overline{\Omega}$ to a stationary solution of \eqref{evol} as $t$ goes to $+\infty$. This stationary solution is strictly positive, thanks to the elliptic strong maximum principle and Hopf principle, and then $\underline{u}(t,x)$ satisfies the persistence property. By the parabolic comparison principle, we can infer that every solution of \eqref{evol} with initial datum $u_{0}$ satisfying $u_{0} \geq \ul u_r$ satisfies the persistence property.

Next, take $\eta \in (\theta , 1)$, and, for $n \in \mathbb{N}$, let $u^{n}_{0}$ be a function with compact support in $\ol\O \cap B_{n}$ such that $u^{n}_{0}\leq \eta$ and
$$
u_{0}^{n} = \eta \quad \text{ in } \  \ol\O \cap B_{n-1}.
$$
Then, we have that $\nu \cdot ( A(x) \nabla u^{n}_{0}) = 0, \ \forall x \in \partial \Omega \cap B_{n-1}$,
%%% Technical Lady 1
i.e., $u_{0}^{n}$ satisfies there the boundary conditions of \eqref{evol}. This is necessary to have the usual parabolic estimates up to time $t=0$, see, for instance \cite[Theorems 5.2, 5.3]{La}. Let $u^{n}(t,x)$ denote the solution of \eqref{evol} arising from the initial datum $u^{n}_{0}$. By the parabolic estimates, $u^{n}$ converges locally uniformly in $[0,+\infty) \times \ol\O$ to the solution $v$ of \eqref{evol} with constant initial datum $v(0,\cdot) \equiv \eta$. Because $\eta >\theta$, $v(t,x)$ converges uniformly to $1$ as $t$ goes to $+\infty$~: indeed, we can define $z(t)$ to be the solution of the ODE $\dot{z}(t) = \min_{x\in \ol\O}f(x,z(t))$ with initial value $z(0) =\eta$. 
This is a subsolution of \eqref{evol}, whence the parabolic comparison principle yields 
$z(t) \leq v(t,x)$ for all $t\geq 0$, $x \in \Omega$. 
Observe that $z(t)$ goes to $1$ as $t$ goes to $+\infty$, because $z(0)>\theta$ and $\theta$ is defined by~\eqref{deftheta} 
as the largest $s\in [0,1)$ such that $f(x,s)$ vanishes. Combining this with the fact that $v \leq 1$, we obtain the uniform convergence of $v(t,\cdot)$ to $1$ as $t$ goes to $+\infty$. Because $\ul u_{r}<1$ and is compactly supported in $\overline{\Omega \cap B_{r}}$, there exists $T>0$ such that
$v(T,\.)>\ul u_r$. We can then find $\bar n$ so that
$$
\forall x \in \overline{\Omega\cap B_{r}},\quad
u^{\bar n}(T,x) \geq \ul u_{r}(x).
$$
The parabolic comparison principle implies that $u^{\bar n}(T+t,\cdot) \geq\underline{u}(t,\cdot)$ and therefore $u^{\bar n}$ satisfies the persistence property. By comparison, the same holds true for any solution of \eqref{evol} with initial datum $u_{0}$ larger than $u^{\bar n}_{0}$, and in particular if $u_{0} >\eta$ on $\ol\O \cap B_{\bar n}$, hence the result.
\end{proof}

%%%%%%%%%%%%%%%%%%%%%%%%%%%%%%%%%%%%%%%%%%%%%%%%%%%%%%%%%%%%%%%%%%%%%%%%%%%%%%%%%%%%%%%

Now that Theorem \ref{th persistence} is proved, and before turning to the proof of Theorem \ref{invasion general}, we 
show that, under an additional assumption on $f$, 
we have ``almost invasion".

%We have proven that, if $\int_{\mathcal{C}} \int_{0}^{1} f(x,u) du ds >0$, then the solutions $\ul u_r$ are not identically equal to zero if $r$ is large enough. Up to strengthening the hypothesis on $f$, we can actually show that these solutions $\ul u_r$ are ``large" if $r$ is large.

\begin{proposition}\label{pro:urto1}
Assume that $f$ satisfies \eqref{mean positive} and that

\begin{equation}\label{strict}
\forall x\in \ol\O,\ \forall s\in[0,1),\quad F(x,s)<F(x,1).
\end{equation}
%%%
Then, for every compact $K\subset\ol\O$, for every $\varepsilon>0$ and $\eta >\theta$, where $\theta$ is defined by \eqref{deftheta}, there is $r>0$ such that any solution to \eqref{evol} with 
	a non-negative initial datum $u_0$ satisfying
	$$u_0>\eta\quad \text{ in }\ \O\cap B_r$$
satisfies
$$
\liminf_{t\to+\infty} \left( \min_{x\in K} u(t,x) \right) > 1-\varepsilon.
$$
%%%
%Then, there exists a diverging sequence $\seq{r}$ for which \eqref{nonparallel} holds and
%a sequence $\seq{z}$ in $\prod_{i=1}^{N}L_{i}\Z $ such that
%$$\ul u_{r_n}(x+z_n)\to1\quad\text{as }n\to\infty,$$
%locally uniformly with respect to $x\in\ol\O$. 
\end{proposition}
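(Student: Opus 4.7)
The strategy is to refine the construction of the stationary subsolutions $\ul u_r$ from Section \ref{sec:persistence} using \eqref{strict}, and then apply the parabolic argument from the proof of Theorem \ref{th persistence}. The key point is that \eqref{strict} forces $\ul u_r$ to be close to $1$ on most of its support; a suitable lattice translate will then yield a compactly supported subsolution that is close to $1$ on the prescribed compact $K$.

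First, combining the upper bound $\mc{E}_r(\ul u_r)\leq Cr^{N-1}-\int_{\O\cap B_{r-1}}F(x,1)$ obtained in the proof of Lemma \ref{ur not 0} with the trivial lower bound $\mc{E}_r(\ul u_r)\geq -\int_{\O\cap B_r}F(x,\ul u_r)$ (using $0\leq\ul u_r\leq 1$ and boundedness of $F$) yields
\[
\int_{\O\cap B_{r-1}}\big[F(x,1)-F(x,\ul u_r)\big]\leq C'r^{N-1}.
\]
Using \eqref{strict} and compactness in $x$, for every $\delta>0$ there is $\gamma(\delta)>0$ with $F(x,1)-F(x,s)\geq\gamma(\delta)$ on $\ol\O\times[0,1-\delta]$, whence
\[
\big|\{x\in\O\cap B_{r-1}:\ \ul u_r(x)\leq 1-\delta\}\big|\leq C'r^{N-1}/\gamma(\delta)=o(r^N).
\]

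Second, I would upgrade this averaged bound to a pointwise estimate near $K$. The nonnegative function $v_r:=1-\ul u_r$ satisfies a uniformly elliptic linear divergence-form equation with bounded coefficients (using $f(x,1)=0$ and Lipschitz continuity of $f$) in $\O$, together with the Neumann condition on $\partial\O$. Harnack's inequality, up to the Neumann boundary, provides universal $\rho>0$ and $C_H\geq 1$ such that $\ul u_r(x_0)\leq 1-\delta$ implies $\ul u_r\leq 1-\delta/C_H$ on $B(x_0,\rho)\cap\ol\O$. Combined with the measure estimate, a pigeonhole argument over periodicity cells gives, for any fixed $M>0$ and all $r$ large, a lattice vector $y\in\ol\O$ with $|y|\leq r-M$ and $\ul u_r(y)\geq 1-\delta$. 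Choosing $M$ depending on $K$, the translated function $\tilde u_r(x):=\ul u_r(x+y)$ solves, by periodicity, the same stationary problem on $\ol\O\cap (B_r-y)\supset \ol\O\cap B_M(0)$. Iterating Harnack's inequality along a finite chain of balls in $\ol\O$ joining $0$ to each point of $K$ -- of uniform length $k$ depending only on $K$, $\rho$, and $\ol\O$ -- yields
\[
\tilde u_r(x)\geq 1-C_H^{k+1}\delta\qquad\text{for all }x\in K.
\]
Picking $\delta=\varepsilon/(2C_H^{k+1})$ gives $\tilde u_r>1-\varepsilon$ on $K$. Extended by zero, $\tilde u_r$ is a nonnegative, compactly supported generalized subsolution of \eqref{evol}.

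The proof concludes exactly as in Theorem \ref{th persistence}. Take the proposition's radius $r^*$ large enough so that $\O\cap B_{r^*}$ contains the support of $\tilde u_r$. Then for any solution $u$ with $u_0>\eta$ on $\O\cap B_{r^*}$, comparison with the solution arising from the constant initial datum $\eta$ (which converges uniformly to $1$ since $\eta>\theta$, as in the proof of Theorem \ref{th persistence}) produces $T>0$ with $u(T,\cdot)\geq\tilde u_r$. The increasing solution starting from $\tilde u_r$ converges to a steady state $\tilde V_r\geq\tilde u_r$, so $\liminf_{t\to+\infty}\min_K u(t,\cdot)\geq\min_K\tilde V_r>1-\varepsilon$.

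The main obstacle I foresee is producing the lattice vector $y$ satisfying both $\ul u_r(y)$ close to $1$ and $|y|$ controlled (so that $K$ sits well inside the shifted domain of $\tilde u_r$), and ensuring that the Harnack chain length $k$ stays uniformly bounded in $r$. The boundary Harnack inequality for the Neumann problem on balls meeting $\partial\O$ is standard but adds a technical layer to the argument.
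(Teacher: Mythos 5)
Your argument is correct in its essentials, but it takes a genuinely different route from the paper's in the middle part.  The paper's Steps~2--4 first show that the \emph{average} of $F(x,1)-F(x,\ul u_r)$ over $\O\cap B_{r-\sqrt r}$ tends to $0$, deduce by contradiction from \eqref{strict} that $\sup_{\O\cap B_{r-\sqrt r}}\ul u_r\to 1$, and then translate a sequence of near-maximizers back into the periodicity cell and pass to the limit (via interior/boundary Schauder estimates), identifying the limit as $\equiv 1$ by the strong maximum principle and Hopf's lemma.  You instead extract from the energy bound a quantitative \emph{measure} estimate for the ``bad'' set $\{\ul u_r\leq 1-\delta\}$ (your weaker $Cr^{N-1}$ upper bound is entirely adequate here, since one only needs $o(r^N)$, so the $\sqrt r$-transition layer used in the paper is not actually essential), then upgrade to a pointwise bound via a boundary Harnack chain for the nonnegative solution $1-\ul u_r$ of a linear divergence-form equation.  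Both routes are valid; yours is more quantitative and avoids the compactness/limit argument, at the cost of invoking the Neumann-boundary Harnack inequality with constants uniform over the periodic $C^3$ domain (standard but non-trivial), where the paper gets away with only the strong maximum principle and Hopf.  One small imprecision in your write-up: the pigeonhole produces a lattice vector $z$ with $|z|\leq r-M$ and a point $y\in(\mc{C}+z)\cap\O$ with $\ul u_r(y)>1-\delta$ -- $y$ is not itself a lattice vector (lattice points need not lie in $\ol\O$).  The translation should then be by $z$, giving $\tilde u_r:=\ul u_r(\cdot+z)$ with $\tilde u_r(y-z)>1-\delta$ and $y-z\in\ol{\mc{C}}$, so the Harnack chain starts from a point in a fixed compact set, which is what makes the chain length uniform in $r$.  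With that reading, your proof is complete and correct.
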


\begin{proof}
	The proof is divided into five steps.
	
	\medskip
	{\em Step 1. Estimate on $\mc{E}_r(\ul u_r)$}.\\
	Recalling that  $\ul u_r$ minimises $\mc{E}_r$, we have that 
	$\mc{E}_r(\ul u_r) \leq \mc{E}_r(\phi)$, for any $\phi \in H^1_0(\ol\O\cap B_r)$. As in the proof of Lemma \ref{ur not 0}, for $r>1$, we define $\phi_{r}:\ol\O\cap B_r \to\R$ as follows:
	$$\phi_{r}(x)=\begin{cases}
	1 & \text{if }|x|\leq r-\sqrt{r},\\
	\sqrt r-\frac{|x|}{\sqrt r} &\text{if }r-\sqrt r<|x|< r.
	\end{cases}$$
	We have $\vert \nabla \phi_{r} \vert \leq \frac{1}{\sqrt{r}}$, and then
	\[\begin{split}
	\mc{E}_r(\phi_{r}) &=
	\int_{\O\cap B_r}\left(\frac12(A\nabla\phi_{r})\.\nabla\phi-F(x,\phi_{r})\right)\\
%	&\leq\frac12\int_{\Omega \cap (B_r\setminus B_{r-\sqrt{r}})}(A\nabla\phi)\.\nabla\phi
%	-\int_{\O \cap B_{r-\sqrt{r}}}F(x,1) 
%	-\left(\min_{\ol\O\times[0,1]}F\right) \vert B_r \setminus B_{r-\sqrt{r}} \vert \\
	&\leq\frac12 \Lambda |B_r\setminus B_{r-\sqrt{r}}|\,r^{-1}
	-\int_{\O \cap B_{r-\sqrt{r}}}F(x,1)
	-\left(\min_{\ol\O\times[0,1]}F\right)\vert   B_r \setminus B_{r-\sqrt{r}} \vert ,
	\end{split}\]
	where $\Lambda$ is given by \eqref{A}. We eventually infer the existence of a constant $C$ independent of $r$ such that
	\Fi{E<}
	\forall  r>1,\quad
	\mc{E}_r(\ul u_r)\leq C r^{N-1/2}-
	\int_{\O \cap B_{r-\sqrt{r}}}F(x,1).
	\Ff
	
	\medskip
	{\em Step 2. Lower bound for the average of $F(x,\ul u_r)$.}\\
%	Recalling that $\ul u_r<1$ by Lemma \ref{lem:ur}, \textcolor{red}{and that 
%	$F(x,s) \leq F(x,1)$ for $s\in[0,1]$, we derive}
First, observe that we have
	\[
	\mc{E}_r( \ul u_r) \geq
	-\int_{\O\cap B_{r-\sqrt{r}}}F(x, \ul u_r)-\vert B_r\setminus B_{r-\sqrt{r}}\vert \max_{\ol\O \times [0,1]} f.
	\]
	Combining this with \eqref{E<}, we get
	%$$(F(1)-F(M_{r,k}))|\O\cap B_{kr}|\leq C_k r^{N-2}+F(1)(|\O\cap B_r|-|\O\cap B_{kr}|).$$
	
	\begin{equation}\label{mean}
	\int_{\O \cap B_{r-\sqrt{r}}} (F(x,1)- F(x, \ul u_r) ) -\vert B_r\setminus B_{r-\sqrt{r}}\vert \max_{\ol\O \times [0,1]} f \leq  C r^{N-1/2}.
	\end{equation}
%	
%	
%	\begin{equation}\label{mean}
%%	2\int_{\O \cap B_{r-\sqrt{r}}} F(x,1) - \int_{\O \cap B_r} F(x,1) \leq  C r^{N-1/2} + 
%		\int_{\O \cap B_{r-\sqrt{r}}} F(x, \ul u_r).
%	\end{equation}
	The inequality \eqref{mean} holds for all $r>1$, hence, using the fact that
	$|\O\cap B_r|\sim\frac{\vert\mc{C}\vert}{\prod L_{i}}|B_r|=\frac{\vert\mc{C}\vert}{\prod L_{i}}|B_1|r^N$ as $r$ goes to $+\infty$, dividing \eqref{mean} by $\vert \O \cap B_{r-\sqrt{r}} \vert$ and taking the limit $r \to +\infty$, 
	we eventually infer that
	\begin{equation}\label{Mk}
	\liminf_{r\to+\infty}\fint_{\O\cap B_{r-\sqrt{r}}} (F(x, \ul u_r)-F(x,1))
	\geq 0.
	\end{equation}
	%\[\begin{split}
	%\limsup_{r\to+\infty} \min_{x \in \Omega}\vert F(x,1)-F(x,M_{r,k})\vert &\leq
	%\lim_{r\to+\infty}\!\frac{C_k r^{N-2}+\max_{x\in \Omega}F(x,1)(|\O\cap B_r|\!-\!|\O\cap B_{kr}|)}
	%{|\O\cap B_{kr}|} \\
	%&= \max_{x \in \O}F(x,1)\frac{1-k^N}{k^N}.
	%\end{split}\]
	%This yields
	%\Fi{Mk}
	%\forall k\in(0,1),\quad
	%\liminf_{r\to+\infty}F(M_{r,k})\geq F(1)(2-k^{-N}).
	%\Ff

	\medskip
	{\em Step 3. Convergence of the maxima to $1$}.\\
We show now that  \eqref{Mk} implies that 
\Fi{urto1}
\sup_{\O\cap B_{r-\sqrt{r}}}\ul u_r   \underset{r \to +\infty}{\longrightarrow} 1.
\Ff
We proceed by contradiction : assume that there is a diverging sequence 
$(r_{n})_{n\in\N}$ such that (we recall that $\underline{u}_r$ satisfies $\max \underline{u}_r< 1$)
$$M := \sup_{\su{n\in\N}{x\in\O\cap B_{r_n-\sqrt{r_n}}}}\ul u_{r_n}(x)<1.$$ 
Then,  \eqref{Mk}  implies that
$$
0 \leq  \liminf_{n\to+\infty}
 \fint_{\O\cap B_{r_n-\sqrt{r_n}}}
(F(x,\ul u_{r_n}) -F(x,1)) \leq \max_{\ol\O \times [0,M]}(F(x,s) - F(x,1)).
$$
This contradicts the hypothesis \eqref{strict}.

\medskip
	{\em Step 4. $\ul u_{r}$ is large on a large set.}\\	
	Consider a sequence of radii $(r_{n})_{n\in\N}$ 
	diverging to $+\infty$ and satisfying \eqref{nonparallel}.
	%and such that, applying 
%	\eqref{Mk}, we have $\frac{1}{\vert \O \cap B_{r_{n}-\sqrt{r_{n}}} \vert} \int_{\O\cap B_{r_{n}-\sqrt{r_{n}}}}\mu_{r_{n}}(x)dx>\Psi(1)(1-\frac1n)$. Without loss of generality, we can assume that \eqref{nonparallel} holds. Now, 
	For $n\in\N$, let $x_{n}\in \overline{\O\cap B_{r_{n}-\sqrt{r_{n}}}}$ be such that 
	$\ul u_{r_{n}}(x_n) = \max_{\overline{\O\cap B_{r_n-\sqrt{r_n}}}}\ul u_{r_n}$.
	Then let $z_n\in\prod_{i=1}^{N} L_{i}\Z$ be
	such that $x_{r_n}-z_n\in\mc{C} $. Finally, define $u^n(x):= \ul u_{r_n}(x+z_n)$. For any compact set $K \subset \ol\O$, these functions are well defined in $K$, for $n$ large enough, because $r_{n} - \vert z_{n} \vert > \sqrt{r_{n}} - \sqrt{\sum_{i=1}^{N}L_{i}^{2}}$. Hence, owing to the partial boundary estimates (see, e.g., \cite[Theorem 6.30]{GT})
	they converge (up to subsequences) locally uniformly in 
	$\ol\O$ to a solution $u^*\leq1$ of \eqref{stationary}.
	Furthermore, by the choice of $x_{n}$ and \eqref{urto1}, 
%	$$u^{n}(x_{r_{n}}-z_{n}) =  \sup_{x\in\O\cap B_{r_n-\sqrt{r_n}}}\ul u_{r_n}(x), $$
%
%which converges to $1$ as $n\to\infty$ thanks to \eqref{urto1},
 we have that $\max_{\ol{\mc{C}}}u^*=1$. Proceeding exactly as in the proof of Lemma \ref{lem:ur},
	by means of Hopf's lemma and strong maximum principle, we eventually infer that
	$u^*\equiv1$. This shows that $\ul u_{r_n}(\cdot+z_n)$ converges to $1$ locally uniformly in $\ol \O$ as $n$ goes to $+\infty$.
	
\medskip	
{\em Step 5. Conclusion.}\\
We are now in position to conclude the proof. Take a compact set $K\subset \overline{\O}$ and $\varepsilon>0$ and $\eta > \theta$. First, owing to the fourth step, we can find $\tilde{r}>0$ and $z\in\prod_{i=1}^{N} L_{i}\Z$ such that
$$
\forall x \in K, \quad \ul u_{\tilde{r}}(x + z) > 1-\varepsilon.
$$

Now, arguing as in the proof of Theorem \ref{th persistence} above, we can find $r>0$ such that, if $u(t,x)$ is the solution if \eqref{evol} arising from an initial datum $u_{0} \geq \eta$ on $\ol\O \cap B_{r}$, then there is $T>0$ such that 
$$
u(T,\cdot) \geq \ul u_{\tilde{r}}(\cdot +z).
$$
Because $\ul u_{\tilde{r}}(\cdot +z)$ extended by $0$ out of $B_{r}(-z)$ is a generalized stationary subsolution of \eqref{evol}, we have that $u(T+t,\cdot) \geq \ul u_{\tilde{r}}(\cdot +z)$, for every $t\geq 0$. The result follows.
\end{proof}
%%%%%%%%%%%%%%%%%%%%%%%%%%%%%%%%%%%%%%%%%%%%%%%%%%%%%%%%%%%%%%%%%%%%%%%%%%%%%%%%%%%%%%%%

%-------------------------------------------------------------------------------
\section{Invasion}\label{section invasion}

This section is dedicated to the proof of the invasion results Theorems \ref{invasion general}, \ref{inv ptf} and their Corollaries  \ref{invasion combustion}, \ref{invasion bistable} and \ref{invasion bistable 2}.

\subsection{Proofs of Theorems \ref{invasion general} and \ref{inv ptf}}\label{sec:invasion}
The idea to prove Theorem \ref{invasion general} is mainly geometrical: roughly speaking it reduces to approaching  
front-like initial data by a sequence of compactly supported data.
Let us preliminarily observe the following fact, that will be used several times in the sequel.
\begin{lemma}\label{ODE}
	The unique bounded solution $u$ to \eqref{evol} satisfying $\inf u>\theta$, with $\theta$ defined in~\eqref{deftheta},
	is 	$u\equiv1$.
\end{lemma}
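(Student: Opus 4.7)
The plan is to sandwich $u$ between two spatially constant barriers obtained from one-dimensional ODEs, leveraging the fact that $u$ is defined on the entire time axis. Set $g(s):=\min_{x\in\ol\O}f(x,s)$ and $g^*(s):=\max_{x\in\ol\O}f(x,s)$; by periodicity and continuity of $f$ these are continuous on $\R$, and both vanish at $s=1$. The definition \eqref{deftheta} of $\theta$ yields $g>0$ on $(\theta,1)$, while the negative extension of $f$ past $1$ yields $g^*<0$ on $(1,+\infty)$. Consequently, the ODE $\dot z=g(z)$ starting at any $z(0)\in(\theta,1]$ is strictly increasing with $z(T)\to 1$ as $T\to+\infty$, and $\dot w=g^*(w)$ starting at any $w(0)\in[1,+\infty)$ is decreasing with $w(T)\to 1$ (the only admissible zero of $g$, resp.\ $g^*$, in the relevant range being $s=1$). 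Regarded as spatially constant functions of $(t,x)$, $z$ and $w$ are respectively a sub- and a supersolution of \eqref{evol}, and the Neumann condition is trivially satisfied.

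The core step is a backward-in-time comparison. For the lower bound, put $m:=\inf u$, which is $>\theta$ by assumption, and let $z$ solve $\dot z=g(z)$ with $z(0)=m$. Fix any $(t_1,x_1)\in\R\times\ol\O$ and any $T>0$: since $u(t_1-T,\cdot)\geq m=z(0)$, the parabolic comparison principle applied on $[t_1-T,t_1]$ gives $u(t_1,x_1)\geq z(T)$; letting $T\to+\infty$ yields $u(t_1,x_1)\geq 1$, hence $u\geq 1$ throughout. Symmetrically, setting $M:=\sup u\geq 1$ (finite by boundedness) and letting $w$ solve $\dot w=g^*(w)$ with $w(0)=M$, the same backward comparison on $[t_1-T,t_1]$ delivers $u(t_1,x_1)\leq w(T)\to 1$ as $T\to+\infty$ (the case $M=1$ being trivial). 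Combining the two estimates, $u\equiv 1$.

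The only point requiring care is to justify the parabolic comparison principle on the unbounded periodic domain $\ol\O$ with Neumann boundary condition; this is however a standard consequence of the boundedness of $u$ and of the global Lipschitz continuity of $f$ via a Gronwall-type estimate, and the comparison functions $z,w$ fulfill the boundary condition automatically since they are constant in $x$. I do not expect any serious obstacle beyond this routine verification; the main conceptual ingredient is really the use of the entire-in-time character of $u$ to run the comparison backward over an arbitrarily long interval.
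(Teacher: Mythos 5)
Your proof is correct and follows essentially the same route as the paper's: both compare $u$ on arbitrarily long backward time intervals against the solutions of the one-dimensional ODEs $\dot z=\min_{x}f(x,z)$ and $\dot w=\max_{x}f(x,w)$, started respectively at $\inf u>\theta$ and at $\sup u$ (the paper uses $\max\{1,\sup u\}$, while you first derive $u\geq 1$ and then note $\sup u\geq 1$, which amounts to the same thing), and then let the length of the interval tend to $+\infty$. The only cosmetic difference is that the paper phrases the backward comparison via time-translates $\ul v(\cdot+T)$, $\ol v(\cdot+T)$ on $[-T,+\infty)$ rather than via a comparison on $[t_1-T,t_1]$, which is an equivalent formulation.
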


\begin{proof}
	Let $\ul v$, $\ol v$ be the solutions to the following ODEs:
	$$\dot{\ul v} = \min_{x\in \ol\Omega}f(x,\ul v),\qquad
	\dot{\ol v} = \max_{x\in \ol\Omega}f(x,\ol v),$$ 
	with initial data $\ul v(0) =\inf u>\theta$ and $\ol v(0) =\max\{1,\sup u\}$.
	These functions are respectively a sub and a supersolution to \eqref{evol}, and the same is true for their translations
	$\ul v(\.+T)$, $\ol v(\.+T)$ defined in $[-T,+\infty)$, for any $T\in\R$.
	We then deduce from the parabolic comparison principle that
	$$\ul v(t+T)\leq u(t,x)\leq\ol v(t+T)\quad\text{ for all } \ T\in\R, \ t\geq-T \ \text{ and } \ x\in\O.$$
	Now, because $f(\.,s)$ is positive for $s\in(\theta,1)$ by the definition~\eqref{deftheta} of $\theta$,
	and because $f(\cdot,s)$ is negative for $s>1$, it is clear that $\ul v(t)\to1$ and $\ol v(t)\to1$ as $t$ goes to $+\infty$.
	As a consequence, letting $T$ go to $+\infty$ in the above inequalities yields $u\equiv1$.	
\end{proof}	

\begin{proof}[Proof of Theorem \ref{invasion general}]

It is straightforward to see that property $(ii)$ implies $(i)$: every front-like datum 
(in the sense of \eqref{def f l}) is, up to a suitable translation, 
larger than any $\eta<1$ in any bounded subset of $\O$.
The proof of the reverse implication is split into four steps.

\medskip
\emph{Step 1. Reducing to an equivalent property.}\\
Take $\eta,\eta'$ satisfying $\theta<\eta'<\eta<1$. For $n \in \mathbb{N}$, $n \geq 2$, we let $u_{0}^{n}$ denote a 
non-negative function compactly supported in $\ol\O \cap B_{n}$ such that $u_{0}^{n} = \eta$ on $\ol{\O \cap B_{n-1}}$. We call $u^{n}$ the solution of \eqref{evol} arising from the initial datum $u_{0}^{n}$. We claim that there exists $n>0$ such that
%\begin{equation}\label{1}
%	u_{0}(x)\geq\eta \quad\text{for }x\in\O\cap B_R,
%\end{equation}
%satisfies
\begin{equation}\label{2 rest}
	\text{for every compact } K \subset \ol\O,\quad \liminf_{t\to+\infty}\left(\min_{x\in
	K}u^{n}(t,x)\right)\geq\eta'.
\end{equation}
Before proving this claim, let us show how it entails property $(ii)$. Consider a diverging sequence $(t_k)_{k\in\N}$.
%Consider a compact set $K'\subset\ol\O$ and a diverging sequence $(t_k)_{k\in\N}$ such that
%$$\lim_{k\to+\infty}\left(\min_{x\in
%	K'}u^{n}(t_k,x)\right)=\liminf_{t\to+\infty}\left(\min_{x\in
%	K'}u^{n}(t,x)\right).$$
By usual parabolic estimates, the functions $u^{n}(\.+t_{k},\.)$
converge as $k$ goes to $+\infty$ (up to subsequences)
locally uniformly in $\R\times\ol\O$ 
to an entire solution $u^{\infty}$ of \eqref{evol}. Using \eqref{2 rest} we find that
$u^{\infty}(t,x) \geq \eta^{\prime}>\theta$ for all $t \in \mathbb{R}$ and $x\in \Omega$. 
It then follows from Lemma \ref{ODE} that $u^\infty\equiv1$.
This shows that $u^n(t,x)\to1$ as $t$ goes to $+\infty$, locally uniformly in $x\in\ol\O$, that is the invasion property. 

We have derived the invasion property for the initial datum $u_{0}^{n}$, 
provided \eqref{2 rest} holds, 
and then by comparison for all initial data larger than
$u_{0}^{n}$. This is precisely property $(ii)$.
It remains to prove that \eqref{2 rest} holds for $n$ sufficiently large. 
We argue again by contradiction, assuming that for any $n\geq 2$, 
\begin{equation}\label{absurd}
	\exists K_{n} \text{ compact subset of } \ol\O,\quad\liminf_{t\to+\infty}
	\left(\min_{x\in K_{n}}u^{n}(t,x)\right)<\eta'.
\end{equation}

\medskip
\emph{Step 2. Lower bound on the expansion of the level sets.}\\
For $n\geq 2$ we define
$$T_n:=\inf\left\{t\geq0\,:\,\exists x\in \ol{\O\cap B_{\sqrt{t}}},\ u^{n}(t,x)\leq \eta'\right\}.$$
	Observe that \eqref{absurd} implies that the above set is nonempty because 
	$K_n\subset \ol{\O\cap B_{\sqrt{t}}}$ for $t$ large.
	We have that
	$$\forall t\in(0,T_n),\ \forall x\in \ol{\O\cap B_{\sqrt{ t}}},\quad
	 u^{n}(t,x)>\eta',$$
	and there exists $x_n\in\ol{\O\cap B_{\sqrt{T_n}}}$ such that $u^{n}(T_n,x_n)=\eta'$.
%%% ICI, il faut utiliser l hypothese absurde pour dire aue T_n est borne

%%%
%%%
%%%
Let us show that $T_n\to+\infty$ as $n$ goes to $+\infty$. Because the initial datum $u^{n}_{0}$ satisfies the boundary condition of \eqref{evol} on $\ol\O \cap B_{n-1}$, we can apply the parabolic estimates (see \cite[Theorems 5.2, 5.3]{La}) to get that  $u^{n}$ converges locally uniformly in $[0,+\infty)\times\ol\O$ to a solution $v$ of \eqref{evol} satisfying $v(0,x)\geq\eta$ for $x\in\ol\O$. In particular, $v(t,x)\geq\eta$ for all $t\geq0$ because 
$\eta \in(\theta,1)$
is a subsolution to \eqref{evol} by \eqref{deftheta}. This local uniform convergence implies that, for every $T>0$, we can find $n$ large enough such that
$$
 \forall t \in [0,T], \ \forall x \in \O\cap\overline{B_{\sqrt{T}} }, \quad u^{n}(t,x) >\eta^{\prime}.
$$
Hence, $T_n \geq T$, for $n$ large enough. This means that $T_n\to +\infty$ as $n$ goes to $+\infty$.

%%% CV

\medskip
\emph{Step 3. Reducing to a front-like entire solution.}\\
Consider the sequence $(z_n)_{n\in\N}$ in $\prod_{i=1}^{N}L_{i}\Z$ for which $y_{n} := x_n-z_n\in\mathcal{C}$. Then define
		$$w_n(t,x):=u^n(t+T_n,x+z_n).$$ 
		By the periodicity of the problem, the functions $w_n$ are solutions
		to \eqref{evol} for $t>-T_n$ and satisfy 
		$$w_n(0,y_n)=\eta',$$
		\Fi{wn>}
		\forall t\in[-T_n,0],\ \forall x\in \O\cap B_{\sqrt{t+T_n}}(-z_n),
		\quad w_n(t,x)\geq \eta'.
		\Ff
Because $T_n\to +\infty$ as $n$ goes to $+\infty$ by the previous step,
the sequence $(w_n)_{n\in\N}$ converges (up to subsequences) locally uniformly
		to an entire solution $w_\infty$ of \eqref{evol}, i.e., a solution for all times $t\in\R$. Observe that $w_\infty$ satisfies $w_\infty(0,y)=\eta'$, where $y$ is the limit of (a subsequence of) $(y_n)_{n\in\N}$. Furthermore, defining for $t\in \mathbb{R}$ 
$$
H_t :=\O\cap \bigcup_{M\geq 1} \bigcap_{n\geq M} B_{\sqrt{t+T_n}}(-z_n)
$$
(by convention, we set $B_{\sqrt{\tau}} = \emptyset$ if $\tau\leq0$)
we see that
$$
 \forall t\leq0, \ \forall x \in H_t, \quad w_\infty(t,x)\geq \eta'.
$$
Assume first that (a subsequence of) $( x _n )_{n\in \mathbb{N}}$ is bounded. 
Then, so is $( z _n )_{n\in \mathbb{N}}$ and thus
$B_{\sqrt{t+T_n}}(-z_n)$ invades $\R^N$ as $n$ goes to $+\infty$, which implies that
$H_t = \Omega$ for all $t$. As a consequence, $w_\infty$ satisfies 
$$
\forall t\leq0, \ \forall x \in \Omega, \quad w_\infty(t,x)\geq \eta',
$$
and therefore $w_\infty\geq\eta'$ for all $t\in\R$ and $x \in \Omega$ by comparison
with the subsolution identically equal to $\eta'$.
Lemma~\ref{ODE} eventually yields $w_\infty\equiv1$, which is impossible because $w_\infty(0,y)=\eta'$. 
Consider now the other possibility:
$$
\vert x_{n}\vert \underset{n \to +\infty}{\longrightarrow} +\infty.
$$
Let us show that there is $e\in \mathbb{S}^{N-1}$ such that
$$
 \forall t \leq 0, \quad P := \left\{ x\in\O\ :\  x\cdot e  < y\cdot e \right\} \subset H_t.
$$
Let $e\in \mathbb{S}^{N-1}$ be such that, up to extraction, $\hat{x}_n := \frac{x_n}{\vert x_n \vert} \to e$ as $n$ goes to $+\infty$. Take $t\leq 0$ and $x \in P$. Then, there is $\e>0$ such that $x\cdot e < y\cdot e -\e$. For $n \in \mathbb{N}$, we have
\begin{equation*}
\begin{array}{lll}
\vert x +z_{n}\vert^{2} &=& \vert x -y_{n} +x_{n}\vert^{2} \\
&=& \vert x -y_n\vert^{2} + \vert x_n \vert^{2} +2(x-y)\cdot x_n +2(y-y_n)\cdot x_n \\
&<& \vert x -y_n \vert^{2} + \vert x_n \vert^{2} -2\e \vert x_n \vert+ 2(x-y)\cdot (x_n-\vert x_n \vert e)  +2(y-y_n)\cdot x_n \\
&\leq&  \left( \frac{\vert x -y_n \vert^{2}}{\vert x_n \vert}  -2\e  +2(x-y)\cdot (\hat{x}_n-  e) +2(y-y_n)\cdot \hat{x}_n \right) \vert x_n \vert +\vert x_n \vert^{2}.
\end{array}
\end{equation*}
Because $\vert x_n \vert \to +\infty$  and $\hat{x}_{n} \to e$ as $n$ goes to $+\infty$,  we have that 
$$
\limn\left( \frac{\vert x -y_n \vert^{2}}{\vert x_n \vert}  -2\e 
+2(x-y)\cdot (\hat{x}_n-  e)  +2(y-y_n)\cdot \hat{x}_n\right)
=-2\e.$$ 
Therefore, recalling that
$\vert x_n \vert \leq \sqrt{T_n}$, we derive for $n$ large enough
\begin{equation*}
\vert x +z_{n}\vert^{2} \leq T_n  -\e \sqrt{T_n} <  T_n + t .
\end{equation*}
This means that $x \in H_{t}$. Hence
$$
\forall t\leq0, \ \forall x \in P, \quad w_\infty(t,x)\geq \eta' \ \text{ and } \ w_\infty(0,y)=\eta' .
$$

Now, because $\eta'\in(\theta,1)$,
\cite[Lemma 1]{D} ensures that the functions $w_{\infty}(t,\cdot)$ are actually 
front-like as $x\cdot e\to-\infty$ \emph{uniformly} with respect to $t<0$, in the sense that
\Fi{wfl}
\lim_{x\cdot e \to -\infty} \left(\inf_{t<0} w_{\infty}(t,x)\right) =1.
\Ff

\medskip
\emph{Step 4. Proof of the lower bound \eqref{2 rest}.}\\
Here we use property $(i)$. 
Owing to the previous step, the function $\underline{w}_{0}$ defined by 
$$\underline{w}_{0}(x) := \chi(x\.e)\big(
\inf_{t<0}w_{\infty}(t,x)\big),$$ with $\chi$ smooth, decreasing and satisfying
$\chi(-\infty)=1$, $\chi(+\infty)=0$, fulfils the front-like condition 
\eqref{def f l}. 
Therefore, by property $(i)$, invasion occurs for the solution 
$\underline{w}$ of~\eqref{evol} with initial datum $\underline{w}_{0}$. Now, for $m>0$, there holds
$$
 \forall x \in \Omega, \quad \underline{w}_{0}(x) \leq w_{\infty}(-m,x),
$$
and then, by comparison, $
\underline{w}(m,y) \leq w_{\infty}(0,y) = \eta^{\prime}$.
This contradicts the invasion property of $\underline{w}$.
We have proved that \eqref{2 rest} holds for $n$ large enough. 
%We can conclude as in the proof to Theorem \ref{th persistence}: thanks to what precedes, for $\eta>\theta$ chosen, we can build $w(x)$ compactly supported satisfying the boundary condition of \eqref{evol} such that $w(x) \leq \eta$ and such that the solution of \eqref{evol} arising from the initial datum $w$ satisfies the invasion property. Let $M>0$ be such that $\supp w \subset B_M$. Hence, any solution arising from a compactly supported initial datum $u_0$ such that
%$$
%u_0(x) > \eta, \quad \forall x \in B_M \cap \Omega
%$$
%satisfies the invasion property. 
\end{proof}

We now prove our second main result concerning the invasion property.

\begin{proof}[Proof of Theorem \ref{inv ptf}]
We prove the result by showing that property $(ii)$ of
Theorem~\ref{invasion general}
can still be derived by if one replaces property $(i)$
with Hypothesis \ref{H2}.
Recall that the only step of the previous proof which makes use of property $(i)$ is step 4.
Let us check that it holds true under Hypothesis \ref{H2}.
Let $v$ be a pulsating traveling front in the direction $e$ with a positive speed $c$.
Using the fact that $c>0$ and that $w_{\infty}$ satisfies \eqref{wfl}, together with
the nonincreasing monotonicity of $s \mapsto f(x,s)$ in some neighbourhoods
of $0$ and~$1$, one can prove that 
$$
\forall t\in \mathbb{R}, \ \forall x\in \O, \quad v(t,x) \leq w_{\infty}(t,x) .
$$
This is classical in the homogeneous case, in our heterogeneous framework we can invoke \cite[Lemmas 1, 2]{D}.
Of course the function $v(t,x)$ can be replaced by any temporal-translation 
$v(t+\tau,x)$, and thus, because $v$ fulfils \eqref{ptf} with $c\neq0$,
we can assume without loss of generality that $v(0,y)>\eta'$. This is impossible because
$w_{\infty}(0,y)=\eta'$. This proves \eqref{2 rest}, concluding the step 4.
\end{proof}

\begin{remark}\label{vitesse positive}
The validity of the invasion property for a compactly supported non-negative initial datum $u_0$ 
satisfying $\sup u_0<1$ readily implies that the invasion takes actually
 place with at least a linear speed. Indeed the associated solution $u$ satisfies, for some $T>0$, 
 $$
 \forall x \in  \O,\ \forall k\in \prod_{i=1}^N\{-L_i,0,L_i\},
 \quad u(T,x)\geq 
 u_0(x+k),
 $$
 and thus, since the spatial translations of $u$ by $L_1\Z\times\cdots\times L_N\Z$
 are still solutions of~\eqref{evol}, using the comparison principle we get,
 by iteration, 
  $$\forall n\in\N,\
 \forall x \in  \O,\ \forall k\in \prod_{i=1}^N\{-nL_i,\dots,0,\dots,nL_i\},
 \quad u(nT,x)\geq 
 u_0(x+k).
 $$
 Now, owing to the invasion property,
 for any $\e>0$ we can find $\tau>0$ such that $u(t,x)>1-\e$ for $t\geq\tau$ and
 $x$ in the periodicity cell $\mc{C}$. We eventually infer that
 $u(t,x)>1-\e$ for $t\geq nT+\tau$ and
 $x\in\ol\O\cap([-nL_1,nL_1]\times\cdots\times[-nL_N,nL_N])$.
 This means that the upper level set $\{u>1-\e\}$ propagates with at least a linear speed.
% 
% Indeed, for any $j\in$
% assuming for simplicity that the periods $L_{1}$,\dots,$L_{N}$ are all equal to $1$,
% we can find $m\in\N$ such that $\supp u_{0} \subset \ol\O\cap [-m,m]^{N}$. 
%Take $\eta$ such that $\max\{ \theta , \sup u_{0} \}<\eta<1$, with $\theta$ given by
%\eqref{deftheta}. Then, because invasion occurs for $u$, there is $T>0$ such that
%$$
%\forall t\geq T, \ \forall x \in  \ol\Omega \cap [-3L,3L]^{N}, \quad u(t,x)\geq \eta.
%$$
%%
%In particular, we have
% $$
%\forall a\in\{-1,0,1\}^{N} , \  \forall t \geq T, \quad u(t,x) \geq u_{0}(x - 2La). 
%$$
% %
% Using the periodicity of the problem, and using the functions $ u_{0}(\cdot - 2La)$, for $a\in\{-1,0,1\}^{N}$, as initial data, the comparison principle gives us
% $$
%\forall k \in \mathbb{N}, \ \forall t\geq kT, \ \forall x \in \O\cap[-(2k+1)L,(2k+1)L]^{N},  \quad u(t,x)\geq \eta.
% $$ 
% %
% Let us denote $w := \frac{2L}{T}$. One can infer (for instance, arguing as in the end of the proof of Theorem \ref{th oriented} below) that
% $$
% \inf_{\vert x \vert \leq w t} u(t,x)\underset{t \to +\infty}{\longrightarrow} 1,
% $$ 
% % 
% which means that the invasion occurs at least with the linear speed $w$. It is easy to see that $w$ is actually independent of $u_{0}$: if $v(t,x)$ is another solution of \eqref{evol} converging to $1$ locally uniformly in $\ol\O$ as $t$ goes to $+\infty$, we can find $\tilde{T}>0$ such that
% $$
% \forall t\geq \tilde{T}, \ \forall x \in \Omega, \quad v(t,x)\geq u_{0}(x),
% $$
% and the parabolic comparison principle then yields that $v$ also invades $\ol\O$ with at least speed $w$.
\end{remark}

\subsection{Applications of the invasion result}\label{sec:cor}

In this section, we make use of Theorem \ref{inv ptf}  to derive Corollary \ref{invasion combustion} and two other results that apply to the bistable case.
This essentially reduces to check that Hypothesis~\ref{H2} holds.

Corollary \ref{invasion combustion} applies to non-negative nonlinearities, i.e., satisfying \eqref{combustion}, such as the Fisher-KPP nonlinearity $f(u) = u(1-u)$ or the Arrhenius nonlinearity $f(u) = e^{-\frac{1}{u}}u(1-u)$.
To derive it, we shall need the following result from Berestycki and Hamel.
 %%% Resultat BH pulsating
\begin{proposition}[\text{\cite[Theorem 1.13]{pulsating}}]\label{prop pulsating}
Assume that $f$ satisfies \eqref{combustion} and
\begin{equation}\label{type1}
\left\{
\begin{array}{llc}
\exists \vartheta \in (0,1) ,\  f(x,s) =0 \text{ if } s\leq \vartheta, \\
\forall s>\vartheta , \ \exists x \in \ol\O \text{ such that } f(x,s)>0,
\end{array}
\right.
\end{equation}
and that $q$ satisfies \eqref{type1 q}. 
Then, Hypothesis \ref{H2} holds for the problem \eqref{evol}.
\end{proposition}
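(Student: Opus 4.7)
The plan is to construct the pulsating traveling front by reformulating the problem as an elliptic equation on an infinite cylinder and then applying topological methods, following the strategy pioneered by Berestycki and Nirenberg in the homogeneous case and adapted to periodic media. Writing a candidate solution in the form $u(t,x) = \phi(x\cdot e - ct, x)$, with $\phi(s,\cdot)$ periodic in $x$ and satisfying $\phi(s,x) \to 1$ as $s \to -\infty$ and $\phi(s,x) \to 0$ as $s \to +\infty$, equation~\eqref{evol} translates into a degenerate elliptic equation for the pair $(c, \phi)$ posed on $\mathbb{R} \times \mathcal{C}$. I would attack this in three stages: construction of truncated fronts, passage to the infinite limit, and positivity of the speed.

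First, I would fix a large $a>0$ and work on the truncated cylinder $(-a,a) \times \mathcal{C}$, imposing $\phi=1$ at $s=-a$, $\phi=0$ at $s=a$, periodicity in the transverse variable, and the conormal condition on $\partial\Omega$. Since $c$ is unknown, I would add a normalization such as $\max_{x\in \mathcal{C}} \phi(0,x) = \vartheta$. For fixed $c$, the resulting boundary value problem is amenable to standard elliptic theory; the role of $c$ would then be determined by a Leray--Schauder degree argument applied to the map $(c,\phi) \mapsto \phi$, which can be reduced via a homotopy to a reference problem (e.g.\ the homogeneous, drift-free case where the existence is classical). The sliding method of Berestycki--Nirenberg would yield the strict monotonicity $\partial_s \phi <0$ and uniqueness up to translation in $s$, making the degree computation tractable.

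Second, I would let $a \to \infty$. The main a priori estimates needed are an upper bound on $c_a$, obtained by comparison with an exponential supersolution built from a principal eigenvalue problem for the linearization near $0$, together with local $C^{2,\alpha}$ estimates on $\phi_a$ uniform in $a$. Passing to the limit (up to extraction) produces a candidate $(c,\phi)$ on the full infinite cylinder. Monotonicity in $s$ is preserved, and the limits $\phi(\pm\infty, \cdot)$ are periodic stationary solutions of~\eqref{evol} on $\mathcal{C}$; the combustion structure~\eqref{type1} rules out any nontrivial intermediate periodic state, so these limits coincide with $0$ and $1$, yielding the desired pulsating traveling front.

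The main obstacle, and the heart of the theorem, is proving $c>0$. This is where the assumptions~\eqref{type1 q} on $q$ are essential. The plan is to multiply the equation for $\phi$ by $\partial_s \phi$ and integrate over $\mathbb{R} \times \mathcal{C}$; the conditions $\nabla\cdot q = 0$ and $q\cdot\nu=0$ on $\partial\Omega$ turn the drift contribution into a boundary term, while $\int_{\mathcal{C}} q = 0$ cancels the leading-order bias, leaving an identity of the schematic form
\begin{equation*}
c \int_{\mathbb{R}\times \mathcal{C}} (\partial_s \phi)^2 \, ds\, dx \;=\; \int_{\mathcal{C}} F(x,1)\, dx \;>\; 0,
\end{equation*}
where positivity of the right-hand side follows from~\eqref{type1}. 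This forces $c>0$. The delicate point is to justify the manipulation rigorously: one needs absolute convergence of the integrals, which in turn requires exponential decay of $\partial_s \phi$ at $\pm\infty$ deduced from the asymptotics of the linearization at $0$ and $1$. This technical layer, together with the degree-theoretic existence scheme, constitutes the bulk of the work carried out in~\cite{pulsating}.
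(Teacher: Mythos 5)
This proposition is invoked as a black box: the paper cites it as \cite[Theorem~1.13]{pulsating} (together with Theorem~1.14 for the positivity of the speed) and gives no proof of its own, so there is no argument in the text for your proposal to be compared against. Taken as a reconstruction of Berestycki--Hamel's proof, the global architecture you describe --- change of variables $u=\phi(x\cdot e - ct,x)$, truncated cylinders with a normalization condition, Leray--Schauder degree, sliding method for monotonicity, passage to the infinite limit using the combustion structure to pin down the limits at $\pm\infty$ --- is faithful to that paper.

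However, the third stage (positivity of $c$) is where there is a genuine gap. Multiplying the equation by $\partial_s\phi$ and integrating does \emph{not} make the drift contribution disappear under \eqref{type1 q}: writing $\nabla u = e\,\phi_s + \nabla_x\phi$, the drift term contributes $\iint (q\cdot e)\phi_s^2 + \iint q\cdot\nabla_x\phi\,\phi_s$, and neither integral is killed by $\nabla\cdot q=0$, $q\cdot\nu=0$, or $\int_{\mathcal{C}} q=0$ (integration by parts in the transverse variable returns one back to the same integral, and the mean-zero condition only cancels a term with $x$-independent weight, whereas $\phi_s^2$ depends on $x$). So the ``schematic identity'' $c\iint(\phi_s)^2=\int_{\mathcal{C}}F(x,1)$ cannot be reached this way. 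The correct and in fact much simpler mechanism is to integrate the traveling-wave equation over $\mathbb{R}\times\mathcal{C}$ \emph{without} multiplying by $\phi_s$: the second-order terms and the drift terms then vanish exactly (the second-order part is a pure divergence whose boundary flux is $\nu\cdot A\nabla u=0$, the drift contributes $\iint q\cdot e\,\phi_s = -\int_{\mathcal{C}} q\cdot e = 0$ and $\iint q\cdot\nabla_x\phi = 0$ using $\nabla\cdot q=0$ and $q\cdot\nu=0$), leaving
\[
c\,|\mathcal{C}| \;=\; \iint_{\mathbb{R}\times\mathcal{C}} f\big(x,\phi(s,x)\big)\,ds\,dx \;>\;0,
\]
where strict positivity follows since $\phi\to 1$ as $s\to-\infty$ and \eqref{type1}--\eqref{combustion} force $f(x,\phi)>0$ on a set of positive measure. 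You should replace your $\partial_s\phi$-multiplier identity by this direct integration; as stated, the cancellation you claim does not hold and the step fails.
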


\begin{proof}[Proof of Corollary \ref{invasion combustion}]
Assume that $q$ and $f$ satisfy the hypotheses of Corollary \ref{invasion combustion}.
 We cannot apply Theorem \ref{inv ptf} directly to $f$, because it may not be 
 non-increasing in a neighbourhood of $s=0$.
 To overcome this, we take $\e \in (0,1-\theta)$, with $\theta$ given by \eqref{deftheta}, 
 and we define a nonlinearity $\tilde{f}$ independent of $x$
 satisfying
%%% J'ai enlevé que f tilde veirifiat bistable

%$$
%\tilde{f}(u)=0 \text{ if } u\in [0,\theta + \e ] ,\  \tilde{f}(u) \leq \min_{x}f(x,u),\ \forall u \in [0,1] \text{ and } \tilde{f} \text{ satisfies } \eqref{} \text{ and  }\eqref{bistable}. 
%$$
\begin{equation*}
\left\{
\begin{array}{ll}
\tilde{f}(s)=0 \quad  &\text{ for }\ s\in [0,\theta + \e ],  \\
0<\tilde{f}(s) \leq \min_{x\in \ol\O}f(x,s)\quad  &\text{ for }\ s \in (\theta+\e,1).
\end{array}
\right.
\end{equation*}
Applying Proposition \ref{prop pulsating} to $\t f$ (with $\vartheta = \theta + \e$), we 
deduce that Hypothesis \ref{H2} is verified for the problem \eqref{evol} with the nonlinearity $\tilde{f}$. Now, we can apply Theorem \ref{inv ptf} to get that, for $\eta \in(\theta+\e , 1)$, there is $r>0$ such that, if 
$$
 u_0 \geq \eta \quad \text{ in } \ x \in B_r,
$$
then the solution $\tilde{u}(t,x)$ of \eqref{evol} with nonlinearity $\tilde{f}$ and initial datum $u_{0}$ satisfies the invasion property. By comparison, the same holds true
for the solution of \eqref{evol} with initial datum $u_0$ but with nonlinearity $f$, 
because $\tilde{f}\leq f$. 
As $\e$ can be chosen arbitrarily close to $0$, this yields the result.
\end{proof}

In the bistable case, some sufficient conditions for the existence of pulsating fronts
are provided in the whole space with $1$-periodic terms 
(i.e., satisfying our usual definition of periodicity with $L_{1}, \ldots, L_{N} =1$)
by Ducrot \cite[Corollary 1.12]{Ducrot}  and Xin 
\cite[Theorem 2.2]{Xin3}. These results, combined with Theorem \ref{inv ptf}, 
directly yield the~following.
\begin{corollary}\label{invasion bistable}
Consider the equation
\begin{equation}\label{bistable Ducrot}
\partial_{t}u = d\Delta u +r(x)u(u-a(x))(1-u),\quad t >0, \ x\in \mathbb{R}^{2},
\end{equation}
where $a, r \in C^{\alpha}(\mathbb{R}^{2})$, for some $\alpha \in (0,1)$, are 
$1$-periodic and satisfy
$$
 \forall x\in \mathbb{R}^{2},\quad 0<a(x)< 1 \ \text{ and }\ r(x)>0,
$$
and
$$
\overline{\theta} := \frac{\int_{[0,1]^{2}}r(x)a(x)dx}{\int_{[0,1]^{2}}r(x)dx}<\frac{1}{2}.
$$
Then, there is $d_{0}\geq 1$ large enough such that 
properties $(i)$-$(ii)$ of 
Theorem~\ref{invasion general} hold provided $d\geq d_{0}$.
\end{corollary}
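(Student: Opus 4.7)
The plan is to verify the hypotheses of Theorem~\ref{inv ptf} for the nonlinearity
$f(x,u):=r(x)u(u-a(x))(1-u)$ and then invoke it directly. There are two things to check:
the monotonicity of $s\mapsto f(x,s)$ in neighbourhoods of $0$ and $1$, and Hypothesis~\ref{H2},
i.e., the existence of a pulsating traveling front with positive speed in every direction
$e\in\mathbb{S}^{1}$.

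For the monotonicity, a direct computation gives
$\partial_s f(x,0)=-r(x)a(x)<0$ and $\partial_s f(x,1)=-r(x)(1-a(x))<0$, uniformly in $x$
by continuity and $1$-periodicity of $r$ and $a$. Hence there exists $\delta>0$ such that
$s\mapsto f(x,s)$ is strictly decreasing on $(0,\delta)$ and on $(1-\delta,1)$ for every
$x\in\mathbb{R}^{2}$, which is precisely the monotonicity assumption of Theorem~\ref{inv ptf}.
Moreover, $\{s\in[0,1)\,:\,\exists x,\ f(x,s)=0\}=\{0\}\cup\{a(x)\,:\,x\in\mathbb{R}^{2}\}$,
so the threshold defined in \eqref{deftheta} is $\theta=\max_{x}a(x)<1$, and $f$ fits into
the general bistable framework of our paper.

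For Hypothesis~\ref{H2}, I would quote \cite[Corollary~1.12]{Ducrot}, whose hypotheses are
precisely: $a,r\in C^{\alpha}$ and $1$-periodic, $0<a<1$, $r>0$, and the averaged
imbalance condition $\overline{\theta}<\tfrac12$ (which in the homogeneous case reduces
to $\int_{0}^{1}s(s-\overline\theta)(1-s)\,ds>0$, ensuring a positive wave speed). Ducrot's
result provides, for every $d$ larger than some threshold $d_{0}\geq 1$ and every direction
$e\in\mathbb{S}^{1}$, a pulsating traveling front in the direction $e$ with strictly positive
speed for \eqref{bistable Ducrot}. This is exactly Hypothesis~\ref{H2}.

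With both ingredients in hand, Theorem~\ref{inv ptf} applies and yields properties
$(i)$--$(ii)$ of Theorem~\ref{invasion general} for \eqref{bistable Ducrot} whenever
$d\geq d_{0}$. The only non-routine point in this plan is checking that the hypotheses in
\cite{Ducrot} match our setting verbatim; no further PDE argument seems necessary, since
the heavy lifting -- constructing the fronts and ensuring positivity of their speeds -- is
carried out in that reference, and Theorem~\ref{inv ptf} takes care of translating their
existence into the invasion statement.
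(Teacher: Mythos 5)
Your proposal is correct and follows exactly the route the paper intends: the paper states immediately before the corollary that it follows from combining Ducrot's \cite[Corollary~1.12]{Ducrot} (which supplies Hypothesis~\ref{H2}) with Theorem~\ref{inv ptf}, and gives no further detail. Your explicit verification that $\partial_s f(x,0)=-r(x)a(x)<0$ and $\partial_s f(x,1)=-r(x)(1-a(x))<0$ (hence the required monotonicity near $0$ and $1$ holds uniformly by periodicity) is a welcome addition, since the paper leaves that hypothesis of Theorem~\ref{inv ptf} unchecked.
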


\begin{corollary}\label{invasion bistable 2}
Consider the equation
\Fi{Xin}
\partial_{t}u = \nabla ( A(x) \nabla u) + q(x)\cdot \nabla u + u(1-u)(u -\theta), \quad t>0, \ x\in \mathbb{R}^{N},
\Ff
where $A$ is a uniformly elliptic smooth matrix field and $q$ is a smooth
vector field which are
$1$-periodic and $\theta \in (0,\frac{1}{2})$. Then, there is $\delta>0$ and 
$s >N+1$, such that properties $(i)$-$(ii)$ of 
Theorem~\ref{invasion general} hold provided 
\Fi{Abar}
\left\|  A - \int_{[0,1]^{N}}A \right\|_{H^{s}([0,1]^{N})} < \delta \ \text{ and } \ \left\| q \right\|_{H^{s}([0,1]^{N})} < \delta.
\Ff
\end{corollary}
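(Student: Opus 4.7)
The plan is to reduce the corollary to an application of Theorem \ref{inv ptf}, so all I need to do is (a) check the monotonicity hypothesis on $f(u)=u(1-u)(u-\theta)$ near $0$ and $1$, and (b) verify Hypothesis \ref{H2} by invoking Xin's perturbation result \cite[Theorem 2.2]{Xin3}. Since $f'(0)=-\theta<0$ and $f'(1)=-(1-\theta)<0$, by continuity there is $\delta_0>0$ such that $f$ is strictly decreasing on $(0,\delta_0)$ and on $(1-\delta_0,1)$, so the structural assumption of Theorem \ref{inv ptf} is automatic and independent of the coefficients $A,q$.

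For Hypothesis \ref{H2}, I would first recall the constant-coefficient baseline. Set $\bar A:=\int_{[0,1]^N}A$. For the unperturbed problem
\[
\partial_t u=\nabla\cdot(\bar A\nabla u)+u(1-u)(u-\theta),\qquad x\in\R^N,
\]
a standard affine change of variables reduces to $\partial_t v=\Delta v+v(1-v)(v-\theta)$, which admits, in every direction $e\in\mathbb{S}^{N-1}$, a planar bistable traveling front with an \emph{explicit} speed proportional to $1/2-\theta$; since $\theta<1/2$ this speed is strictly positive. Pulling back, the unperturbed $\bar A$-problem admits, in every direction, a traveling front whose speed $c_{0}(e)$ is positive and bounded below uniformly in $e$ by some $c_{\min}>0$.

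Now Xin's theorem \cite[Theorem 2.2]{Xin3} asserts precisely that in a neighbourhood of the constant-coefficient bistable problem (measured in an $H^{s}$ norm on $[0,1]^N$ with $s>N+1$, so as to control $C^{1,\alpha}$ smoothness of coefficients), the perturbed equation \eqref{Xin} still admits pulsating traveling fronts in every direction $e$, and their speeds $c(e)$ depend continuously on the coefficients and converge to $c_{0}(e)$ as the perturbation vanishes. Fix $s>N+1$ large enough that the hypotheses of Xin's theorem apply, and choose $\delta>0$ small enough that both Xin's perturbation theorem is applicable and $|c(e)-c_0(e)|<c_{\min}/2$ uniformly in $e$; then every such $c(e)$ is at least $c_{\min}/2>0$, so Hypothesis \ref{H2} holds.

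With both (a) and (b) in hand, Theorem \ref{inv ptf} applies directly to \eqref{Xin} and yields properties $(i)$--$(ii)$ of Theorem \ref{invasion general}. The only real obstacle is the verification of positive speed after perturbation; this is handled entirely by the continuous dependence statement built into Xin's theorem together with the strict positivity $c_0(e)\geq c_{\min}>0$ afforded by $\theta<1/2$. Since the monotonicity of $f$ near $\{0,1\}$ and the uniform positivity of $c_0$ depend only on $\theta$, the parameters $\delta$ and $s$ can indeed be chosen once and for all, as stated.
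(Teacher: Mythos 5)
Your proposal is correct and follows essentially the same route as the paper, which derives the corollary by combining Xin's perturbation theorem \cite[Theorem~2.2]{Xin3} (verifying Hypothesis~\ref{H2}) with Theorem~\ref{inv ptf}. You spell out the monotonicity check near $\{0,1\}$ and the positive baseline speed $c_0(e)$ explicitly, but the underlying argument is identical to the paper's.
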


%It is readily derived combining Theorem \ref{inv ptf} with a result of Ducrot, see \cite[Corollary 1.12]{Ducrot}}.
%Corollaries \ref{invasion bistable} and \ref{invasion bistable 2} are derived 
%from Theorem \ref{inv ptf} owing to the following two results, respectively by Ducrot and Xin. 

%
%\begin{proposition}[\text{\cite[Theorem 2.2]{Xin3}}]
%Under the assumptions of Corollary \ref{invasion bistable 2},
%problem \eqref{Xin}
%fulfils Hypothesis~\ref{H2} if \eqref{Abar}
%holds for some suitable $s,\delta>0$.
%\end{proposition}

%------------------------------------------------------------------------

\section{Estimates on the spreading speed}\label{estimate}

This section is devoted to the proof of Theorem \ref{invasion2}. As already 
mentioned, this result applies to equations set in the whole space $\mathbb{R}^{N}$ with a nonlinearity that satisfies \eqref{combustion}. 
It provides a lower estimate on the speed of invasion.

 The philosophy of this section differs from the previous one in that we shall build 
 ``explicitly" a subsolution that invades the space with some given speed.
We start with a technical lemma.

\begin{lemma}\label{propdim}
Assume that $f$ is independent of $x$ and satisfies \eqref{combustion}. 
Then, for any $0<\lambda \leq \Lambda$, there exist $H \in (\theta, 1)$, $L>0$ and a non-increasing function $h \in W^{2,\infty}(\R)$ such that
\begin{equation*}
\left\{
\begin{array}{lll}
h(z) = H  \quad &\text{ in }\ (-\infty,0] , \\
h(z) = 0  \quad &\text{ in }\ [L,+\infty), \\
\end{array}
\right.
\end{equation*}
and
\begin{equation}\label{prop h}
Ah''+Bh'+f(h)\geq 0 \quad \text{ in }\ [0,L], \ \text{ for } \ A \in [\lambda, \Lambda], \ B \leq  \frac{\lambda}{\sqrt{\Lambda}} \sqrt{R(f)}. 
\end{equation}

\end{lemma}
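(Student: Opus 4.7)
My plan is to build $h$ explicitly, piecewise, exploiting the largest rectangle that fits under the graph of $f$ and separating the worst-case analyses of $A$ and $B$ in each piece. First, I would use the definition of $R(f)$ as a supremum to pick, for a small $\delta>0$, constants $\theta<K<H<1$ and $m>0$ such that $f(s)\geq m$ for all $s\in[K,H]$ and $m(H-K)\geq R(f)-\delta$. This $H$ will be the plateau value of $h$ at $-\infty$, and the condition $K>\theta$ is forced by the positivity $f\geq m>0$ on $[K,H]$.

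I would then define $h\equiv H$ on $(-\infty,0]$ and $h\equiv 0$ on $[L,+\infty)$, and on $(0,L)$ take $h$ strictly decreasing, split at some point $L_1$ where $h(L_1)=K$. On the \emph{reactive phase} $[0,L_1]$, where $h\in[K,H]$ so $f(h)\geq m$, I would choose $h$ concave (so $h''\leq 0$); then the worst case for $A$ is $A=\Lambda$ and, since $h'\leq 0$ and $B\leq B^*$ make $B=B^*$ the worst case uniformly, I would solve exactly $\Lambda h''+B^* h'+m=0$ with $h(0)=H$, $h'(0)=0$. The explicit solution $h'(z)=-(m/B^*)(1-e^{-B^*z/\Lambda})$ is monotone, and $h$ reaches $K$ at a unique finite $L_1$ with slope $\sigma:=h'(L_1)<0$. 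On the \emph{quiet phase} $[L_1,L]$, where $h\in[0,K]$ and only $f(h)\geq 0$ is guaranteed, I would take $h$ convex (so $h''\geq 0$); the worst case for $A$ becomes $A=\lambda$, and the inequality reduces to $\lambda h''+B^* h'\geq 0$. The profile is chosen to match $h(L_1)=K$, $h'(L_1)=\sigma$ and to meet the terminal conditions $h(L)=h'(L)=0$, so that $h\in W^{2,\infty}(\mathbb R)$ (with $h''$ possibly jumping only at $L_1$).

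The $B$-case analysis is immediate because $h'\leq 0$ throughout. For the $A$-case analysis, the reactive-phase ODE ensures the differential inequality with equality in its worst case, while the quiet-phase convexity makes $\lambda h''+B^* h'\geq 0$ the binding constraint. Matching the two phases quantitatively---continuity of $h'$ at $L_1$, the terminal conditions, and the quiet-phase inequality---produces a constraint on $B^*$ that, after optimization over the shape of the quiet phase, reads $B^*\leq(\lambda/\sqrt\Lambda)\sqrt{m(H-K)}$. Since we have freedom to choose $m(H-K)$ arbitrarily close to $R(f)$, the claimed bound $B^*\leq(\lambda/\sqrt\Lambda)\sqrt{R(f)}$ emerges upon sending $\delta\to 0$. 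The $\sqrt\Lambda$ comes from the reactive phase (where the worst diffusion is $\Lambda$) and the $\lambda$ from the quiet phase (where the worst diffusion is $\lambda$).

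The main obstacle will be the quiet phase: the natural solution $h'(z)=\sigma e^{-B^*(z-L_1)/\lambda}$ of $\lambda h''+B^*h'=0$ never reaches zero in finite time, so I must either cut it off by a short convex polynomial on a last subinterval near $L$ (introducing a jump in $h''$ that is absorbed into $W^{2,\infty}$) or add a small positive reaction $\varepsilon$ in the quiet phase, paid for by the margin $\delta$ in the rectangle approximation. Either way, the bookkeeping must show that the terminal matching consumes exactly the budget $\lambda/\sqrt\Lambda$ in the speed: carefully tracking $L_1$, $\sigma$ and $L-L_1$ through the explicit formulas above, and verifying that $\lambda-B^*(L-z)\geq 0$ on the quiet phase after optimization, is where the precise constant is pinned down.
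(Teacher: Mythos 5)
Your proposal follows essentially the same strategy as the paper: split $[0,L]$ into a concave ``reactive'' piece where $h\in[K,H]$ and $f\geq m>0$, and a convex ``quiet'' piece where $h\in[0,K]$ and only $f\geq 0$ is available; then, because $h'\leq 0$ everywhere, the worst case of $B$ is always $\ol B:=\frac{\lambda}{\sqrt\Lambda}\sqrt{R(f)}$, while the sign of $h''$ makes $A=\Lambda$ the worst case on the first piece and $A=\lambda$ on the second; finally glue with $C^1$ matching so $h\in W^{2,\infty}$. Where you differ is in the explicit profiles: the paper uses a downward parabola $H-\frac{\gamma}{2}z^2$ on the reactive piece and the power $\mu(z_2-z)^\beta$ on the quiet piece, sending $\beta\to\infty$ at the end, whereas you propose solving $\Lambda h''+\ol B h'+m=0$ and $\lambda h''+\ol B h'=0$ exactly. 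Both families saturate the same pointwise inequalities, so the plan is sound.

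The issue is that the quantitative heart of the lemma --- that the matching really admits $\ol B$ as large as $\frac{\lambda}{\sqrt\Lambda}\sqrt{m(H-K)}$ --- is asserted rather than computed. You correctly flag the difficulty that the quiet-phase exponential $h'(z)=\sigma e^{-\ol B(z-L_1)/\lambda}$ never reaches zero in finite time. Your exponential-then-short-polynomial fix does work: the exponential reaches a small level $\epsilon$ provided $|\sigma|>\frac{\ol B K}{\lambda}$, and then a short quadratic tail costs a negligible amount; combined with the reactive-phase quadratic constraint on $|\sigma|$, the reduced condition is $K^2+\frac{2\lambda}{\Lambda}K(H-K)<2$, which is automatic since $0<K<H<1$ --- exactly the inequality the paper verifies. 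But be aware that a pure quadratic quiet phase on all of $[L_1,L]$ would instead require $|\sigma|\geq\frac{2\ol B K}{\lambda}$, giving $K^2+\frac{\lambda}{\Lambda}K(H-K)\leq\frac12$, which fails for $K$ close to $1$; so the exponential stretch before the cutoff is essential, not optional. The paper's $\mu(z_2-z)^\beta$ handles this cleanly because it already terminates at $z_2$ with $h'(z_2)=0$ for $\beta\geq 2$, and the $\beta\to\infty$ limit quietly performs the same optimization you gesture at. Also, one small slip: $h''$ jumps not only at $L_1$ but also at $0$, at the cutoff point, and at $L$; this is still fine for $W^{2,\infty}$ since $h'$ remains Lipschitz. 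Finally, the paper picks $K,H$ to attain $R(f)$ exactly rather than within $\delta$; since the supremum is attained (the map $(K,H)\mapsto(H-K)\min_{[K,H]}f$ is continuous and vanishes on the boundary of $\{0<K<H<1\}$), this avoids having to chase the endpoint $\ol B=\frac{\lambda}{\sqrt\Lambda}\sqrt{R(f)}$ through a $\delta\to0$ limit.
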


%\begin{lemma}\label{propdim}
%Assume that $f$ is independent of $x$ and satisfies \eqref{}, \eqref{combustion}. 
%Let $\rho >0$, $0<\lambda \leq \Lambda$ and
%$a,b\in C^{0}( [\rho,+\infty))\cap L^{\infty}([\rho,+\infty))$
%satisfy$$
%\forall x \in B_{\rho}^{c}, \quad 0<\lambda \leq a(x)\leq \Lambda \  \text{ and }  \ 
%
%
%\|b\|_{L^{\infty}((\mathbb{R}^{N}\backslash B(0,\lambda))}<  \frac{\lambda}{\sqrt{\Lambda}} \sqrt{R(f)}, 
%
%
%b(x)<  \frac{\lambda}{\sqrt{\Lambda}} \sqrt{R(f)}.
%$$ 
%Then, there are $\eta \in (\theta, 1)$, $L>\rho$ and a non-increasing function $h \in W^{2,\infty}(\R)$ such that
%\begin{equation*}
%\left\{
%\begin{array}{lll}
%h(z) = \eta  \quad &\text{ in }\ (-\infty,\rho]  \\
%h(z) = 0  \quad &\text{ in }\ [L,+\infty) \\
%ah''+bh'+f(h)\geq 0 \quad &\text{ in }\ [\rho,L].
%\end{array}
%\right.
%\end{equation*}

%\end{lemma}

Before proving Lemma \ref{propdim}, we show how it yields Theorem \ref{invasion2}.

\begin{proof}[Proof of Theorem \ref{invasion2}.] 
First of all, up to replacing $f(x,s)$ with $\min_{x\in \ol\O}f(x,s)$, it is not restrictive to assume 
that $f$ is independent of~$x$ (and still satisfies \eqref{combustion}).
We have that $f(s)>0$ for $s\in(\theta,1)$.

By hypothesis, there holds
$$
w^{\star} := \frac{\lambda}{\sqrt{\Lambda}} \sqrt{R(f)}-\limsup_{\vert x \vert \to +\infty}q(x)\cdot\frac{x}{\vert x\vert} >0.
$$
Fix $\bar c \in (0, w^{\star})$. Then take $\rho>0$ in such a way that 
$$
 \forall  x  \in B_{\rho}^{c}, \quad \frac{N\Lambda}{\rho}+\bar c+q(x)\cdot\frac{x}{\vert x\vert}< \frac{\lambda}{\sqrt{\Lambda}} \sqrt{R(f)}.
$$ 
Now, let $H,L$ and $h$ be the constants and the function 
provided by Lemma \ref{propdim}, associated with $\lambda,\Lambda$ 
given by \eqref{A}.
Consider the function
$$
v(t,x) := h(\vert x \vert -\bar c t - \rho).
$$
We claim that $v$ is a subsolution of \eqref{evol2}. Equation 
 \eqref{evol2} trivially holds outside the region $0<\vert x \vert -\bar c t - \rho<L$, where
 $v$ is constant, hence it suffices to show~that
$$\forall 
t>0,  \     \forall x\in B_{\rho +L+\bar  ct}\setminus B_{\rho + \bar ct},\quad
\partial_{t}v-Tr(AD^{2}v)-q\cdot\nabla v-f(v)\leq 0.
$$
Direct computation shows that this is equivalent to have, for such $t$ and $x$,
$$
A h^{\prime\prime}(\vert x \vert -\bar c t - \rho)+
 B h^{\prime}(\vert x \vert -\bar c t - \rho)+f(h(\vert x \vert -\bar c t - \rho))\geq 0,
$$
where 
$$A=\left(A(x)\frac{x}{\vert x\vert}\cdot \frac{x}{\vert x\vert}\right),\qquad
B=\left(\frac{Tr(A(x))}{\vert x\vert}-
\frac{1}{\vert x\vert}\left(A(x)\frac{x}{\vert x\vert}\cdot \frac{x}{\vert x\vert} \right)+
q(x)\cdot\frac{x}{\vert x\vert}+\bar c\right).$$
Observing that $\lambda\leq A\leq \Lambda$ 
and that
$$
B
 \leq \frac{N\Lambda}{\rho}+q(x)\cdot \frac{x}{\vert x \vert} +k
 < \frac{\lambda}{\sqrt{\Lambda}} \sqrt{R(f)}
$$
because $\vert x \vert \geq \rho$,
the above inequality holds thanks to \eqref{prop h}. 
This shows that $v$ is a subsolution of \eqref{evol2}. 

Now, arguing as in the proof of Theorem \ref{th persistence}, for any $\eta>\theta$ we can
find $r,T>0$ such that every solution $u$ of \eqref{evol2}
with a non-negative initial datum $u_0\geq \eta$ in $B_r$ satisfies
$u(T,x)>v(0,x)$ for $x\in\R^N$. 
Hence, by comparison, $u(t+T,x)>v(t,x)$ for $t>0$, $x\in\R^N$. 
Consider such a solution $u$ and take $c\in(0,\bar c)$. 
We claim that
$$
\inf_{\vert x \vert \leq ct} u(t,x) 
\underset{t \to +\infty}{\longrightarrow}1.
$$
 We proceed by contradiction:  assume that there is $\e \in (0,1)$, 
 a diverging sequence $(t_{n})_{n\in\N}$ and a sequence
 $(x_{n})_{n\in\N}$ in $\mathbb{R}^{N}$  such that $\vert x_{n}\vert \leq ct_{n}$ and $u(t_{n},x_{n}) \leq 1-\e$. We define the sequence of translations $u_{n}(t,x) := u(t+t_{n},x+z_{n})$, with $(z_n)_{n\in\N}$ in $\prod_{i = 1}^{N}L_{i}\Z$ satisfying 
 $x_n-z_n\in \mc{C}=\prod_{i = 1}^{N}[0,L_{i})$. 
 This sequence converges (up to extraction) to an entire solution 
 $u_{\infty}$ of \eqref{evol2}. 
 It satisfies $\min_{x\in\ol{\mc{C}}}u_{\infty}(0,x) \leq 1-\e$ and, for $t\in\R$ and $x\in\R^N$,
\[\begin{split}u_{\infty}(t+T,x) &=\lim_{n\to+\infty}u(t+T+t_{n},x+z_{n})
  \geq \lim_{n\to+\infty}v(t+t_{n},x+z_{n})\\
  &=  \lim_{n\to+\infty}h(\vert x+z_{n} \vert -\bar c (t+t_n) - \rho).
  \end{split}\]
 The latter term is equal to $H$ because $\vert z_{n}\vert \leq 
 \vert x_{n}\vert + \sqrt{L_1\cdots L_N} \leq ct_{n}+ \sqrt{L_1\cdots L_N} $ and 
 $c<\bar c$. Hence $u_{\infty}$ is everywhere larger than $H>\theta$.	
Therefore, again by the same argument as in the proof of Theorem \ref{th persistence},
we get $u_\infty\geq1$, which contradicts $\min_{x\in\ol{\mc{C}}}u_{\infty}(0,x) \leq 1-\e$. This yields the result.
\end{proof}

We now turn to the proof of Lemma \ref{propdim}.

\begin{proof}[Proof of Lemma \ref{propdim}]
By definition of $R(f)$ and $\theta$, there exist
$\theta<K<H<1$ such that $R(f)=(H-K)\min_{[K,H]}f$. 
For $0<z_{1}<z_{2}$ and $ \beta, \gamma, \mu>0$
that will be chosen later, we define 
the function $h$ on each interval $(-\infty,0]$, $[0,z_{1}]$, $[z_{1},z_{2}]$
as follows:
\begin{equation}\label{fcth}
\left\{
\begin{array}{llc}
h(z)=H \quad & \text{ for } \ z\in (-\infty,0], \\
h(z)=H-\frac{\gamma}{2}z^{2} \quad & \text{ for } \ z\in [0,z_{1}],\\
h(z)=\mu(z_{2}-z)^{\beta} \quad &\text{ for } \ z \in [z_{1},z_{2}].
\end{array}
\right.
\end{equation}
We want to find
$z_{1},z_{2}, \beta, \gamma, \mu $ so that $h\in W^{2,\infty}(\R)$ and \eqref{prop h} holds.
We further impose $h(z_{1})=K$. 
%\begin{equation}\label{x z}
 %\forall x \in B_{\rho}^{c}, \ \forall  z \in \mathbb{R}, \quad a(x)h^{\prime\prime}(z)+b(x)h^{\prime}(z)+f(h(z))\geq 0.
%\end{equation}
We shall take $\beta \geq 2$. Then, to have $h\in W^{2,\infty}$ we need
$$K=H-\frac{\gamma}{2}z_{1}^{2},\qquad K=\mu(z_{2}-z_{1})^{\beta},$$ 
$$\gamma z_{1}=\mu\beta(z_{2}-z_{1})^{\beta-1}=K\beta(z_{2}-z_{1})^{-1}.$$

Now, let us see what conditions are needed to have \eqref{prop h}. 
Call for short $\ol B :=  \frac{\lambda}{\sqrt{\Lambda}} \sqrt{R(f)}$. On one hand, 
if $z \in [0,z_{1}]$, property \eqref{prop h} holds as soon as
$$
-\gamma \Lambda-\gamma\ol B z_1+f(h(z))\geq0.
$$
Then, using the fact that $h(z)\in[K,H]$ for $z\in [0,z_{1}]$, it is sufficient to have$$
-\gamma\big(\Lambda+ \ol B z_{1}\big)+\min_{[K,H]}f\geq0.
$$
On the other hand, for $z\in [z_{1},z_{2}]$, property \eqref{prop h} holds as soon as (recall that $f\geq0$)
$$
\lambda(\beta-1)-\ol B(z_{2}-z_{1}) \geq 0.
$$
For notational simplicity, we shall write $\Delta=z_{2}-z_{1}$. Summing up, $h$ satisfies 
\eqref{prop h} on $\R\setminus\{z_1,z_2\}$ provided
we can find $ z_{1},\Delta,\beta,\mu, \gamma$ such that 

\begin{equation}\label{alg}
\left\{
\begin{array}{llc}
\lambda(\beta-1)-\ol B\Delta \geq0, \\
\gamma\big(\Lambda+ \ol B z_{1}\big)\leq\min_{[K,H]}f, \\
\frac{\gamma}{2}z_{1}^{2}=H-K, \\
\mu\Delta^{\beta}=K, \\
\gamma z_{1}=K\beta\Delta^{-1}.
\end{array}
\right.
\end{equation}
Let us show that this is solvable. We leave $\beta$ as a free parameter and take
$$
\gamma = \left( \frac{\ol B \beta K}{\lambda\sqrt{2(H-K)}(\beta-1)} \right)^{2},
$$
 and then
 $$
 z_{1} = \frac{\sqrt{2(H-K)}}{\sqrt{\gamma}},\qquad 
 \Delta = \frac{\beta K}{\sqrt{2\gamma (H-K)}},\qquad \mu = \frac{(2\gamma (H-K))^{\frac{\beta}{2}}}{\beta^{\beta}K^{\beta-1}}.
 $$
Direct computation shows that all the equations of \eqref{alg} are satisfied, 
with the possible exception of the second one. 
Let us show that the second equation holds as well, provided $\beta$ is sufficiently
large. To do so, we observe that
$$
\gamma \underset{\beta \to +\infty}{\longrightarrow} \left( \frac{\ol B  K}{\lambda\sqrt{2(H-K)}} \right)^{2} \ \text{ and } \ z_{1} \underset{\beta \to +\infty}{\longrightarrow} \frac{ 2\lambda(H-K)}{ \ol B  K},
$$
whence, recalling the expression of $\ol B$, 
$$
\gamma\big(\Lambda+ \ol B z_{1}\big)    \underset{\beta \to +\infty}{\longrightarrow} \frac{  K^2}{2\Lambda }    \left(\Lambda+   \frac{2\lambda(H-K)}{K}\right)
 \min_{[K,H]}f .
 $$
Now, because $K,H \in [0,1]$ and $\lambda \leq \Lambda$, we find that
$$
\frac{  K^2}{2 \Lambda}    \left(\Lambda+   \frac{2\lambda(H-K)}{K}\right) \leq  \frac{1}{2}  K\left(2H- K \right) <1.
$$
It follows that, choosing $\beta$ large, 
the second equation of \eqref{alg} is verified too.
\end{proof}

%------------------------------------------------------------------------

%-------------------------------------------------------------------------------

Now that we have given some sufficient conditions that ensure that invasion occurs, we focus to the case where $f$ is a bistable nonlinearity. In this case, Hypothesis~\ref{H2} is not known to hold in general, thus we cannot apply Theorem \ref{inv ptf}. In Section \ref{section bistable}, we show how the geometry of the domain can either totally block or allow the invasion. In Section \ref{Oriented invasion}, we present a new phenomenon that we call \emph{oriented invasion}.

\section{Invasion and blocking in domains with periodic holes}\label{section bistable}

\subsection{Invasion}\label{invasion}

This section is dedicated to the proof of Theorem \ref{th invasion bistable}. In the whole section, $f$ will denote an unbalanced bistable nonlinearity, i.e, satisfying \eqref{unbalanced}. Before turning to the proof itself, let us make some remarks, that shall prove useful here and also in Section \ref{Oriented invasion}. In order to 
derive the invasion property in a given domain $\Omega$, we shall use ``sliding-type" arguments. Such arguments rely on the existence of compactly supported  generalized subsolutions of the stationary problem associated with \eqref{evol homogeneous}. The latters are given by solutions of 
the following Dirichlet problem:
\begin{equation}\label{tronque}
\left\{
\begin{array}{rll}
-\Delta u& = f(u), \quad &x\in B_{R} \\
u &= 0 , \quad &x \in \partial B_{R}.
\end{array}
\right.
\end{equation}
If $R>0$ is large enough, the existence of a positive solution of \eqref{tronque}
is classical. One could also reclaim the construction done in Section \ref{sec:persistence}
for the more general problem~\eqref{stationary}. 
In the following, such solutions are denoted by $u_{R}$. Thanks to the celebrated result of Gidas, Ni and Nirenberg \cite{GNN}, $u_{R}$ is radially symmetric and decreasing. 
In the sequel, we shall use functions of the form $u_{R}(\cdot - z)$ as stationary subsolutions of \eqref{evol homogeneous}. In this sense, $u_{R}$ will
be understood to be extended by $0$ outside 
$\ol B_{R}$. We also mention for future use that, thanks to Proposition \ref{pro:urto1},\begin{equation}\label{large}
\forall \eta \in (\theta , 1),\ \forall r>0, \ \exists R>0,\quad \min _{B_{r}}u_{R} > \eta.
\end{equation}
%These results at hand, we can turn to the proof of Theorem \ref{th invasion bistable}.

\begin{proof}[Proof of Theorem \ref{th invasion bistable}]
By the previous considerations, we can take $R>0$ large enough so that the function
$u_R$ is non-negative, radially symmetric and decreasing and satisfies 
$$M := \max u_R = u_R (0) > \theta.$$ 
Moreover, $M<1$ by Lemma \ref{lem:ur}. Let $K\subset \mathbb{R}^{N}$ be a
star-shaped compact set. Up to a coordinate change, we can assume that it is star-shaped
with respect to the~origin. Set
$$
L:= 2\diam(K)+2R+1,
$$
where $\diam(K)$ stands for the diameter of $K$. We then define 
$$
\Omega_{1} := (K + L\mathbb{Z}^{N})^{c}.
$$
In order to fall into our standing assumptions, we require that $K^c$ has a $C^3$ 
boundary.
Call $z:=(\frac L2,0,\dots,0)$.
Let $u$ be the solution of \eqref{evol homogeneous} on $\Omega_{1}$ with initial datum $u_{R}(\cdot - z)$. Notice that $u_{R}(\cdot - z)$ is compactly supported in $\Omega_{1}$,
whence it is a generalized stationary subsolution of \eqref{evol homogeneous}. Classical arguments show that $u(t,x)$ is increasing with respect to $t$ and converges locally uniformly in $x\in \ol\O_1$ to a stationary solution $u_{\infty} > u_R (\cdot - z)$ of \eqref{evol homogeneous}
as $t$ goes to $+\infty$. 
In particular,  $u_{\infty} >0$ in $\ol\O_1$
by the elliptic strong maximum principle and the Hopf lemma.
We claim that $u_{\infty} \equiv 1$. The proof of this fact is divided
into three steps.

\medskip
\emph{Step 1. Lower bound ``far from the boundary".} \\
Let us show that
\begin{equation}\label{exterior of balls}
   \forall y \in \left( \bigcup_{k\in L\mathbb{Z}^{N}}B_{\frac L2}(k) \right)^{c}, \quad   u_{\infty} > u_R (\cdot - y).
\end{equation}
Fix an arbitrary point $y$ in the set 
$\left( \bigcup_{k\in L\mathbb{Z}^{N}}B_{\frac L2}(k) \right)^{c}$. 
This set is closed, path-connected, contains the point $z$ and its distance from
$\O_1^c$ is larger than or equal to $L/2-\diam(K)>R$.
We can then find a continuous path 
$\gamma \, : \, [0,1]\mapsto \Omega_{1}$ such that
$$
\gamma(0) = z, \ \gamma(1) = y, \ \text{ and }\  \ol B_{R}(\gamma(s)) \subset \Omega_{1},\ \text{ for all } \ s\in [0,1].
$$
Then, for $s \in [0,1]$, define
$$
s^{\star} := \sup\left\{  s\in [0,1] \ : \ \forall s^{\prime} \in [0,s], \ u_{\infty}>u_{R}(\. - \gamma(s))  \right\}.
$$
Observe that the above set is non-empty because
$u_R (\cdot - z)<u_{\infty} $.
If we show that $s^{\star}=1$ then \eqref{exterior of balls} is proved.
Assume by contradiction that $s^{\star}<1$. Then, the fact that
$u_{R}(x - \gamma(s^\star))$ is continuous with respect to $s$ and $x$ and it is compactly supported
implies that $u_{R}(\. - \gamma(s^\star))$ touches $u_{\infty}$ from below, in the sense that
$\min(u_{\infty}-u_{R}(\. - \gamma(s^\star)))=0$. 
The point(s) at which the minimum is attained necessarily belongs to $B_{R}(\gamma(s^\star)) \subset \Omega_{1}$
because $u_{\infty}>0$ in $\ol\Omega_{1}$.
The elliptic strong  maximum principle 
eventually yields $u_{\infty}\equiv u_{R}(\. - \gamma(s^\star))$, which is impossible
again because $u_{\infty}>0$.  

\medskip
\emph{Step 2. Lower bound ``near the boundary".} \\
We show now that
\begin{equation}\label{interior of balls}
  \forall x \in\ol\O_1\cap
  \bigcup_{k\in L\mathbb{Z}^{N}}B_{\frac L2}(k) , \quad u_{\infty}(x) \geq M .
\end{equation}
To do this we use the same sliding method as before, but we need a refinement. Namely, we need the support of the subsolution to cross the boundary of  $\O_1$. This is where the star-shaped condition comes into play.

For $l>0$, define the function
$$
v_{l} := \max_{e\in \mathbb{S}^{N-1}}u_{R}(\cdot -l e).
$$
Recalling that $u_{R}$ is radially symmetric and decreasing, we see that $v_{l}$
is a radial function with respect to the origin which is increasing on
$\ol B_l\setminus
\ol B_{l-R}$ (with the convention $\ol B_{l-R}=\{0\}$ if $l\leq R$) and decreasing
on $\ol B_{l+R}\setminus \ol B_l$. 
%Also, being the supremum of a family of  subsolutions,
%it is a generalized subsolution to \eqref{evol homogeneous},
%and it is actually a classical one in $B_{l+R}\setminus
%\ol B_{l-R}$.
We know from the previous step that $u_{\infty}>v_{L/2}$.
We define
$$
l^{\star} := \inf \left\{ l \geq0 \  : \  \forall l^{\prime} \in \left[l ,\frac L2 \right], \ u_{\infty} \geq v_{l^{\prime}}    \right\}.
$$ 
Assume by contradiction that $l^{\star} > 0$. By continuity, 
$v_{l^\star}$ touches $u_{\infty}$ from below at some point 
$\bar x\in\ol\O_1\cap (B_{l+R}\setminus \ol B_{l-R})$
(the contact cannot happen on $\partial B_{l+R}\cup\partial B_{l-R}$
because $v_{l^\star}=0$ there if $l\geq R$ and 
$v_{l^\star}$ cannot be touched from above at $0$ by a smooth function if
$l< R$).
We necessarily have that $|\bar x|\leq l^{\star}$,
because otherwise $v_{l}(\bar x)$ would be larger  than
$v_{l^\star}(\bar x)=u_\infty(\bar x)$ for $l<l^\star$ close to $l^\star$, contradicting the definition of $l^\star$.
It is easy to check that
$v_{l^\star}(\bar x)=u_{R}(\bar x -l^\star e)$, with $e=\bar x/|\bar x|$, whence
$u_{R}(\cdot -l^\star e)$ 
touches $u_{\infty}$ from below  at the point 
$\bar x$. 
If $\bar x \in \Omega_1$ then the elliptic strong  maximum principle implies that  
$u_{R}(\cdot -l^\star e)\equiv u_{\infty}$ on $\ol\O_1$, which is not the case
because $u_{R}(\cdot -l^\star e)$ is not  everywhere positive on $\ol\O_1$. Hence, 
$\bar x \in \partial K$. 
Recalling that $|\bar x|\leq l^{\star}$, we find that 
$\nabla u_{R}\left(\bar x-l^{\star}e \right)$ is positively collinear to $e$, whence

$$(l^\star e-\bar x)\cdot\nu(\bar x)=\bigg(\frac{l^\star}{|\bar x|} -\bar x\bigg)\,\bar x\.\nu(\bar x) \leq 0.$$

$$
\nu(\bar x)\.\nabla u_{R}(\bar x -l^\star e) = 
 \left( \nu(\bar x)\cdot e \right) \vert \nabla u_{R}\left(\bar x-l^{\star}e \right)\vert 
 \leq 0,$$
the latter term is non-positive because $K$ is star-shaped with respect to $0$. But 
then the Hopf lemma leads to
the same contradiction $u_{R}(\cdot -l^\star e)\equiv u_{\infty}$ 
as before. We have reached a contradiction.
%It is readily seen that $\nabla u_{R}\left(z-l^{\star}e \right)$ is positively collinear to $e$. Because $K+\{ k \}$ is star-shaped with respect to $k$, we have that $\nu(y)\cdot e\leq 0 $. Hence $ \partial_{\nu}w_{l^{\star}}(z) \leq 0 $. Using the Hopf lemma, this leads to a contradiction. 
This shows that $l^{\star} = 0$. In particular, we find that 
$u_\infty\geq M$ on $\ol\O_1\cap\ol B_{L/2}$.

The above argument can be repeated with $v_l$ replaced by $v_l(\.+k)$
for any $k\in L\mathbb{Z}^{N}$, leading to the property \eqref{interior of balls}.

\medskip
\emph{Step 3. Conclusion.} \\
We know from \eqref{exterior of balls}-\eqref{interior of balls} that $u_{\infty}\geq  M>\theta$
in the whole $\Omega_{1}$. Then, $u_\infty$ being an entire (stationary) solution of~\eqref{evol homogeneous},
Lemma~\ref{ODE} implies that $u_\infty\equiv 1$.
Thus, the invasion property holds for the initial datum $u_R(\cdot -z)$. We then~derive property~$(ii)$ of Theorem \ref{invasion general}
 by arguing as in the proof of Theorem \ref{th persistence} in Section~\ref{sec:persistence}.
 \end{proof}

%----------------------------------------------------------------------------------------

\subsection{Blocking}\label{blocking}

This section is dedicated to the proof of Theorem \ref{th blocking bistable}. Namely,
we exhibit a smooth periodic domain $\Omega_{2}$ where the blocking property holds for
any compactly supported data $\leq1$. 
We use a result by Beresycki, Hamel and Matano, 
who construct a compact set $K \subset B_{1/2}$ such that there is a classical solution to the problem
\begin{equation}\label{syst block}
\begin{cases}
-\Delta w = f(w) & \text{ in }B_{1}\setminus K \\
\partial_{\nu}w = 0 & \text{ on }\partial K \\
w=1 &\text{ on } \partial B_{1}, \\
0<w<1& \text{ in }B_{1}\setminus K,
\end{cases}
\end{equation}
see \cite[Theorem 6.5 and (6.8)]{BHM}. The function $w$ will act as a \emph{barrier} which prevents invasion. Our domain $\O_2$ is depicted in Figure \ref{domain BHM}.

%satisfying $w<1$ in $B_{1}\backslash K$ and $\inf_{B_{1}\backslash K}w>0$.

\begin{figure}[H]
\begin{center}
\includegraphics[scale = 0.5]{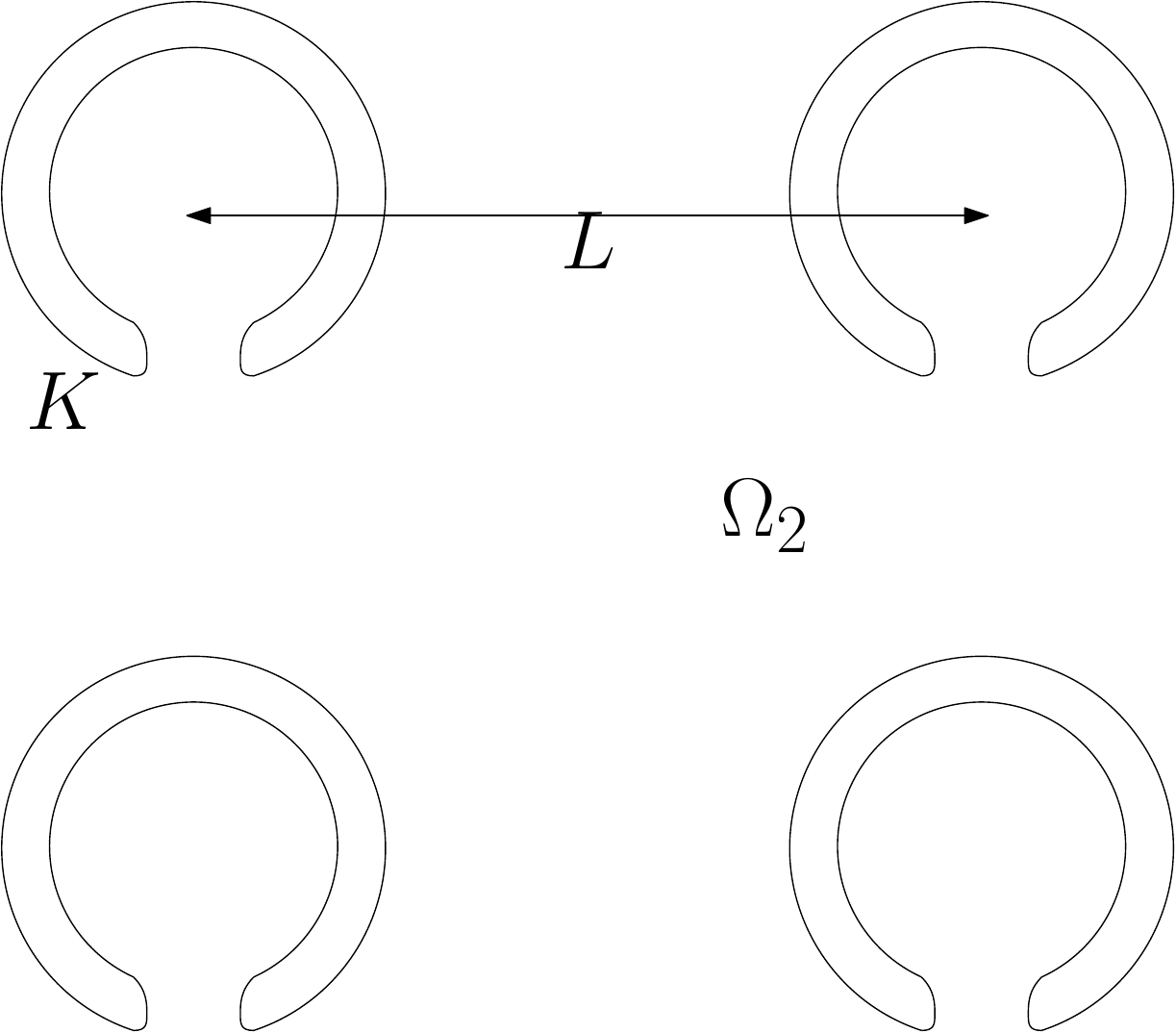}
 \caption{The domain $\Omega_{2}$}
    \label{domain BHM}
    \end{center}
    \vspace{-10pt}
\end{figure}

\begin{proof}[Proof of Theorem \ref{th blocking bistable}] 
	As at the beginning of the proof of Theorem \ref{th invasion bistable} in the previous section, we start
	with taking $R>0$ so that the solution $u_R$ to \eqref{tronque} is nonnegative and satisfies $\theta<\max u_{R}<1$. 
	Call $L:= 2R+2$ and define
	$$
	\Omega_{2} := (K + L\mathbb{Z}^{N})^{c}.
	$$
We prove the three statements of the theorem separately.

\medskip
\emph{Statement $(i)$.} \\
%We use the result of Berestycki, Hamel and Matano \cite[Theorem 6.5]{BHM} mentioned above, to get a compact set $K \subset B_{1/2}$ and $w$ a solution of \eqref{syst block}.
Let $u$ be the solution of \eqref{evol homogeneous} arising from a compactly supported initial datum $u_{0}\leq1$. Let $M>0$ be large enough so that $\supp (u_0) \subset B_{M}$. 
Consider the
solution $w$ of \eqref{syst block} given by \cite{BHM}. Define the function $\t w$ as follows:
$$
\forall k\in L\mathbb{Z}^{N}\ \text{such that }|k|>M+1,\
\forall x\in\ol\O_2\cap B_1,\quad
\tilde{w}(k+x) := w(x),$$
then extended to $1$ in the rest of $\ol\O_2$.
This is a generalized stationary supersolution of \eqref{evol homogeneous}. Hence, because
 $u_{0}\leq \tilde{w}$, the parabolic comparison principle yields that 
\begin{equation}\label{well}
\forall t \geq 0, \ \forall x \in \Omega_{2} , \quad  u(t,x)\leq \tilde{w}(x).
\end{equation}
This implies that $u$ does not satisfy the invasion property.
Let us see that $u$ is actually blocked in the sense of Definition \ref{def}. We argue by contradiction.
If this were not the case, we would be able to find a diverging sequence $(t_{n})_{n\in \mathbb{N}}$ and a sequence 
$(x_{n})_{n\in \mathbb{N}}$ in $\O_2$ such that $u(t_{n},x_{n}) \to 1$ as $n$ goes to $+\infty$. Then, defining
$$
u_{n} := u(\cdot +t_{n},\cdot +k_{n}),
$$
where $k_{n} \in L\Z^{N}$ is such that $x_{n} - k_{n} \in \mathcal{C}$, the parabolic estimates would
 allow us to extract  a subsequence of $(u_n)_{n\in\N}$ converging locally uniformly in $\ol\O_2$
  to some entire solution $u_{\infty}$  of \eqref{evol homogeneous} such that $\max   u_{\infty}=1$. 
  The parabolic comparison principle and the Hopf lemma would yield $u_{\infty} \equiv 1$. 
  Hence, from \eqref{well} we would get that $\t w(\cdot +k_{n})\to1$ as $n$ goes to $+\infty$ 
  (up to subsequences), locally   uniformly on $\ol\O_2$,
  which is impossible because
  $\inf_{B_{M+L+2}} \t w(\cdot +k)\leq \inf w<1$ for any $k\in L\Z^{N}$.
  
\medskip
\emph{Statement $(ii)$.} \\
Now,  let $\tilde{u}$ be the solution of \eqref{evol} emerging from the initial datum $u_R(\cdot - z)$, 
where $z:=(\frac L2,0,\dots,0)$. Observe that $\supp u_R(\cdot - z)=\ol B_R(z)\subset \O_2$, whence
$u_R(\cdot - z)$ (extended by $0$ outside $\ol B_R(z)$) is a stationary generalized subsolution of \eqref{evol homogeneous}. It follows that, as $t$ goes to $+\infty$, $\tilde{u}(t,x)$ converges increasingly to
 a stationary solution $p(x)$ of~\eqref{evol homogeneous} satisfying
 $u_R(\cdot - z)\leq p\leq1$. We have proved above that $p \not\equiv 1$. Let us show that $p$ is periodic.

Take $k \in L\mathbb{Z}^{N}$. We can find a continuous path $\gamma \, : \, [0,1] \mapsto \Omega_{2}$ such that 
$$
\gamma(0) = z, \ \gamma(1) = z+k, \text{ and }\ \ol B_{R}(\gamma(s)) \subset \Omega_{2}, \ \text{ for } s\in [0,1].
$$
Then, we can argue as in the proof of Theorem \ref{th invasion bistable} to get that
$$
p \geq u_{R}(\cdot - z -k).
$$
Using $u_{R}(\cdot - z -k)$ as an initial datum for \eqref{evol homogeneous}, the parabolic comparison principle yields
$$
p\geq p(\cdot-k).
$$
This being true for all $k\in L\mathbb{Z}^{N}$, we have that $p$ is indeed periodic, 
with the same periodicity as $\O_2$. 
Finally, the fact that $1>\inf p>u_R(0)>\theta$ readily implies that $p$ cannot be constant.

\medskip
\emph{Statement $(iii)$.} \\
Let us show that invasion fails for front-like initial data. Let $v_0$ be a front-like 
initial datum in a direction 
$e \in \mathbb{S}^{N-1}$, in the sense of~\eqref{def f l}. 
Consider again the solution $w$ of \eqref{syst block}, extended by $1$ outside $\ol B_1$. 
Then, there is $M>0$ such that
$$
\sup_{\su{x\cdot e >M}{x\in\O_2}} v_0(x) < \inf w.
$$
We take $k \in L\Z^{N} $ such that $k\cdot e > M+1$. It holds that 
$w(\cdot -k)\geq v_0$. Thus
the parabolic comparison principle yields that the solution of \eqref{evol homogeneous} arising from $v_0$ 
lies below $w(\cdot -k)$, whence cannot converge locally uniformly to $1$ as $t$ goes to $+\infty$. 

Suppose now that \eqref{evol homogeneous} admits a pulsating front solution $v$ (recall Definition \ref{def:ptf}).
On one hand, we have just shown that the invasion property fails for $v$, which implies that
its speed $c$ cannot be positive. 
On the other hand, statement~$(ii)$ provids us with a solution with a compactly supported initial datum 
$u_0<1$ which converges to a positive periodic steady state  as $t$ goes to $+\infty$. 
Up to translation, $u_0$ can be fit below $v(0,\cdot)$,
and thus we deduce by comparison that the speed $c$ cannot be $\leq0$ either.
Hence, pulsating fronts do not exist for problem \eqref{evol homogeneous} in $\O_2$
\end{proof}

\section{Oriented invasion}\label{Oriented invasion}

In this section, we construct some domains which exhibit a new phenomenon, that we call oriented invasion, 
which is between blocking and invasion. Namely, invasion occurs in a direction but is blocked in the opposite one. 

Throughout this section, the nonlinearity $f$ is of the unbalanced bistable type \eqref{unbalanced}. 
As in Section \ref{section bistable}, we shall use some ``sliding-type" arguments to prove that invasion occurs in some directions,
and some ``barriers" to get the blocking in other directions. Recall that these methods worked
under suitable
geometric conditions on the domain.
The whole issue here is to construct a periodic domain which roughly satisfies one type of condition
in some directions and the other type in other directions.

Let us make the geometric condition required in the sliding method explicit.
We will slide the same functions $u_{R}(\cdot-z)$ as in Section
\ref{section bistable}, extended by $0$ outside $\ol B_R(z)$ and restricted to $\O$,
which are generalized subsolutions of the first equation in \eqref{evol homogeneous}. We recall that
for $R$ large enough, $u_R$ is positive in $B_R$, with
$\theta<\max u_{R}<1$. By~\cite{GNN}, we further know that
$u_{R}$ is radially symmetric and decreasing on $\ol B_R$,  whence
$$
\forall x \in (\partial \O)\cap B_{R}(z), \quad \nu(x)\cdot \nabla u_{R}(x-z) = \nu(x) \cdot \frac{z-x}{\vert z-x \vert} \vert  \nabla u_{R}(x-z) \vert.
$$
It follows that $u_{R}(\cdot-z)$
 is a generalized stationary subsolution of \eqref{evol homogeneous} if and only if $\O$ fulfils the following geometric
 condition:
\begin{equation}\label{condition subsolution}
 \forall x \in (\partial\O)\cap \ol B_{R}(z), \quad   (z-x)\cdot\nu(x) \leq 0.
\end{equation}

\subsection{Oriented invasion in a periodic cylinder}\label{sec:cyl}

We start with showing that the oriented invasion occurs in cylindrical domains.
We consider problem \eqref{evol homogeneous} set in a 
{\em periodic cylindrical} domain, that is, of the~form
$$
\O= \left\{  (x_{1},x_{2})\in \mathbb{R}\times\mathbb{R}^{N-1} \ : \  x_2   \in \omega(x_1)    \right\},
$$
where $\omega(\.)\subset \R^{N-1}$ is such that $\O$ is of class $C^3$, 
connected and periodic in the sense that there is $L>0$ such that $\omega(\cdot +L) = \omega$.
%with $\omega:\mathbb{R}\to(0,+\infty)$ periodic of class $C^3$.
Throughout this section, the points in $\O$ will be denoted by $(x_1,x_2)\in\R\times\R^{N-1}$, and $e_1 := (1,0 ,\ldots,0)$ will be the unit vector in the direction of the axis of the cylinder. 

\begin{theorem}\label{OI cylinder}
There exists a periodic cylindrical domain $\O$ and a positive constant~$c$ 
such that, for every $\eta>\theta$, there is $r>0$ for which 
the following properties hold for every solution to \eqref{evol homogeneous} arising from a
compactly supported initial datum satisfying 
$$0\leq u_0\leq 1,\qquad
u_0>\eta\quad \text{ in }\ \O\cap B_r.$$
\begin{enumerate}[$(i)$]
\item {\em Invasion to the right:}
$$
\forall 0<c_{1}<c_{2}<c, \quad \underset{{\substack{c_{1}t \leq x_1\leq c_{2} t \\  x_2  \in \omega(x_1)}}}{\min}  \big|u(t,x_1,x_2)-1\big| \underset{t \to +\infty}{\longrightarrow}0.
$$
\item {\em Blocking to the left:}
$$
u(t,x_1,x_2)  \underset{x_1 \to -\infty  }{\longrightarrow} 0
\quad\text{ uniformly in }\ t>0,\ x_2 \in \omega(x_1).
$$
%$$
%\begin{cases}
%\forall c_{1}, c_{2} \in (0,c_{r}),\ c_{1}<c_{2}, \quad \underset{{\substack{c_{1}t \leq x_1\leq c_{2} t \\ \vert x_2 \vert \leq \omega(x_1)}}}{\min}  u(t,x_1,x_2) \underset{t \to +\infty}{\longrightarrow}1,\\
%\underset{\substack{ t> 0 \\  x_{1} \leq L \\ \vert x_{2}\vert \leq \omega(x_1)}}{\sup} \  u(t,x_1,x_2)  \underset{L \to -\infty  }{\longrightarrow} 0.
%\end{cases}
%$$
\end{enumerate}
\end{theorem}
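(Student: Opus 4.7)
The plan is to construct explicitly the cross-section profile $\omega(x_1)$ as an asymmetric periodic shape combining two mechanisms within each cell of period $L$: a gently tapered stretch whose boundary satisfies the star-shaped condition \eqref{condition subsolution} for rightward slides of $u_R$-balls, together with a BHM-type bottleneck in the sense of \cite{BHM} whose sharp geometry creates a leftward-facing barrier. With such an $\Omega$, one applies in opposite horizontal directions the two mechanisms used to prove Theorems \ref{th invasion bistable} and \ref{th blocking bistable}.

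For statement $(i)$, fix $R$ large so that the radial subsolution $u_R$ of \eqref{tronque} has $\max u_R > \theta$, and reduce, as at the end of the proof of Theorem \ref{th persistence}, to the initial datum $u_0 \geq u_R(\cdot - z_0)$ with $z_0$ in the first cell to the right of the origin. By design, \eqref{condition subsolution} holds for every ball $\ol B_R(z)$ with $z$ in the right-half cylinder, so the sliding argument of Steps 1--2 of the proof of Theorem \ref{th invasion bistable} applies and shows that the monotone increasing limit $u_\infty := \lim_{t \to +\infty} u(t, \cdot)$ satisfies $u_\infty \geq u_R(\cdot - z)$ for every such $z$; in particular $u_\infty \geq M := \max u_R > \theta$ on the right-half cylinder. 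A localized variant of Lemma \ref{ODE} (ruling out nontrivial bounded defects via the strictly positive lower bound $f \geq f_{\min} > 0$ on $[M,1]$, which forces any strictly superharmonic solution on the unbounded right-half cylinder to be unbounded) yields $u_\infty \equiv 1$ on the right-half. The iteration scheme of Remark \ref{vitesse positive} then produces a positive linear speed $c>0$, giving the uniform convergence on the cone $\{c_1 t \leq x_1 \leq c_2 t,\ x_2\in\omega(x_1)\}$ for any $0<c_1<c_2<c$.

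For statement $(ii)$, we build a stationary generalized supersolution $\tilde w$ on the left-half cylinder using the BHM solution $w$ of \eqref{syst block}, as in the proof of Theorem \ref{th blocking bistable}$(i)$. Given $u_0$ compactly supported with $\supp u_0 \subset \{x_1 > -M\}$, we place in each cell of $\Omega$ contained in $\{x_1 < -M\}$ a translated copy of $w$, glued to the constant $1$ on the connecting pieces. The parabolic comparison principle gives $u(t, \cdot) \leq \tilde w$ on $\{x_1 < -M\}$ for all $t > 0$. To upgrade this bound from mere blocking below $1$ to the decay $u \to 0$ as $x_1 \to -\infty$, we refine $\tilde w$ cell by cell: the barrier on the $k$-th cell to the left is defined as the solution of an auxiliary Dirichlet problem on that cell whose inflow datum on the rightward face equals the previous cell's barrier. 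Each passage through a BHM bottleneck contracts the pointwise supremum by a fixed factor $\alpha \in (0,1)$, so the supremum over the $k$-th cell to the left is at most $\alpha^k$, which tends to $0$.

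The main obstacle is the geometric design itself: a single periodic cylinder must simultaneously support a rightward boundary slope gentle enough for \eqref{condition subsolution} to be preserved under sliding, and a bottleneck sharp enough for the iterative leftward contraction to hold. We expect this to be achievable by choosing $L$ much larger than $R$, so that the tapered sliding region and the BHM bottleneck can be placed in disjoint parts of each cell without interfering; a secondary subtlety is the passage from the half-cylinder invasion to the cone-shaped spreading statement, which relies on the quantitative one-cell-per-time-$T$ advance in the sliding step combined with a standard compactness argument in the spirit of the proof of Theorem \ref{invasion general}.
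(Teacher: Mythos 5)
Your plan correctly identifies the two mechanisms that the proof must combine (a sliding/invasion argument to the right, a BHM-type barrier to the left), but part $(i)$ has a genuine gap that the paper resolves with a tool absent from your proposal.

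\textbf{The sliding of $u_R$-balls cannot cross the bottleneck.} You claim that, by design, condition~\eqref{condition subsolution} ``holds for every ball $\ol B_R(z)$ with $z$ in the right-half cylinder.'' This is not achievable. The cylinder is $L$-periodic, so the narrow passage repeats at every $x_1\in L\Z$; to invade rightward the subsolution has to pass through every such bottleneck. On the closing side of a bottleneck (where $\upsilon'<0$), take a boundary point $x=(x_1,\upsilon(x_1))$ and a center $z=\lambda e_1$ with $\lambda>x_1$; then
$$(z-x)\cdot\nu(x)\ \propto\ -\upsilon'(x_1)(\lambda-x_1)-\upsilon(x_1),$$
which is \emph{positive} as soon as $|\upsilon'(x_1)|(\lambda-x_1)>\upsilon(x_1)$. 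Since $\upsilon$ is forced to be small at the bottleneck (so that the BBC measure hypothesis in Proposition~\ref{prop bbc} holds) and $\upsilon'$ is steep there, \eqref{condition subsolution} fails. More conceptually, the $u_R$-ball sliding argument does not distinguish left from right: if it traversed the bottleneck rightward, it would also traverse it leftward, contradicting blocking. The paper instead proves $(i)$ by constructing a \emph{moving} subsolution with curved level sets,
$$\psi(t,x)=\phi\big(x_1+\eta\,|x_2|^2-ct\big),$$
where $\phi$ is a one-dimensional traveling front for a perturbed nonlinearity $\tilde f\le f$. The bending parameter $\eta$ makes the Neumann boundary condition become $-\upsilon'(x_1)+2\eta|x_2|\ge0$, which is automatic on the closing side ($\upsilon'\le0$) and requires only $\upsilon'\le 2\eta\,\min\upsilon$ on the opening side, hence $L$ large. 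This curved traveling wave is what crosses the bottlenecks with positive speed $c$, and it is precisely the asymmetry-sensitive tool your proposal lacks. Your ``localized variant of Lemma~\ref{ODE}'' (asserting that $u_\infty\geq M>\theta$ on a half-cylinder forces $u_\infty\equiv1$ there) is also unproved and the heuristic you offer for it (strict superharmonicity forcing unboundedness) does not obviously apply since $u_\infty\leq1$.

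\textbf{Part $(ii)$: the contraction scheme is not needed and is unverified.} You propose an iterative cell-by-cell refinement in which each passage through a bottleneck contracts the sup of the barrier by a factor $\alpha<1$, giving geometric decay. This is a plausible heuristic but is not established by either \cite{BHM} or \cite{BBC}, and proving it would require a uniform quantitative estimate that is not available off the shelf. The paper avoids this entirely: Proposition~\ref{prop bbc} already delivers a solution $w$ of \eqref{eq block L} with the built-in decay $w(x_1,x_2)\to0$ as $x_1\to-\infty$, uniformly in $x_2$. Extending $w$ by $1$ to the right and translating it past the support of $u_0$ gives a single stationary generalized supersolution, and the comparison principle yields statement $(ii)$ in one stroke.

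In short, your overall strategy (asymmetric periodic cross-section, BHM barrier for blocking, sliding/propagation for invasion) matches the paper, but the central device of part $(i)$, the curved traveling-front subsolution $\psi$, is missing, and the $u_R$-ball sliding you substitute for it fails at the bottleneck; for part $(ii)$ you introduce an unproved contraction claim where the paper directly exploits the decay built into the BBC barrier.
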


% \lim_{t\to +\infty}\left( \underset{{\substack{c_{1}t \leq x_1\leq c_{2} t \\ \vert x_2 \vert \leq \omega(x_1)}}}{\min}  u(t,x_1,x_2)\right) =
%%% On parle du résultat de BBC

The blocking result makes use of the results by Berestycki, Bouhours and Chapuisat~\cite{BBC}, 
which are in turn
inspired by \cite[Theorem 6.5]{BHM} by 
Berestycki, Hamel and Matano, already used here in Section \ref{blocking}. 
In \cite{BBC}, the authors build an asymptotically straight cylinder 
for which all solutions initially confined in the half space $\{x_1 <0\}$ do not invade. 
The mechanism they exploit is that the propagation is hampered by the presence of a ``narrow passage''
which suddenly widens. 
More precisely, we shall need the following.
\begin{proposition}\label{prop bbc}
Let $\O$ be a periodic cylindrical domain and let $b\in \R$. There exists a positive constant $\varepsilon$ depending~on 
$$
\left\{ (x_1,x_2) \in \O \ : \ x_1 \in (b,b+1)  \right\}
$$
such that, if
$$
\left\vert \left\{ (x_1,x_2) \in \O \ : \ x_1\in (b+1,b+2)  \right\}  \right\vert < \varepsilon,
$$
then the following problem admits a positive solution:
\begin{equation}\label{eq block L}
\begin{cases}
-\Delta w = f(w), \quad &(x_1,x_2) \in \O,\ x_1< b+2 \\
\partial_{\nu} w = 0, \quad &(x_1,x_2) \in \partial \O, \ x_1<b+2  \\
w(b+2,x_2)= 1, \quad  & x_2 \in \omega(b+2) \\
w(x_1,x_2)   \underset{x_1 \to -\infty  }{\longrightarrow} 0 &
\text{uniformly in } x_2  \in \ol\omega(x_1).
\end{cases}
\end{equation}
\end{proposition}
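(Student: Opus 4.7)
The plan is to adapt the variational construction of \cite[Theorem 6.5]{BHM}, as implemented in the asymptotically straight cylindrical setting of \cite{BBC}, to our periodic cylindrical geometry. I would obtain $w$ as the $C^{2}_{loc}$-limit of minimizers of a Ginzburg--Landau-type energy on bounded truncations of $\Omega \cap \{x_1 < b+2\}$; the smallness of the passage volume is precisely what forces those minimizers to stay strictly below $\theta$ on the reference slice $S := \Omega \cap \{b < x_1 < b+1\}$, from which the decay at $-\infty$ can then be derived.

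For $R > 0$ large, set $\Omega_R := \Omega \cap \{-R < x_1 < b+2\}$ and minimize
\[
\mathcal{E}_R(w) := \int_{\Omega_R}\!\Big(\tfrac12 |\nabla w|^2 - F(w)\Big)\,dx, \qquad F(s) := \int_0^s f(\sigma)\,d\sigma,
\]
on $\mathcal{A}_R := \{w \in H^1(\Omega_R) : 0 \leq w \leq 1,\ w|_{x_1 = b+2} = 1,\ w|_{x_1 = -R} = 0\}$. Standard direct methods yield a minimizer $w_R$, classical solution of $-\Delta w_R = f(w_R)$ with the Neumann condition on the lateral boundary and $0 < w_R < 1$ by the strong maximum principle. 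Plugging in the one-dimensional test function $\phi(x_1, x_2) := \chi(x_1)$, with $\chi \in C^\infty(\mathbb{R},[0,1])$ vanishing on $(-\infty, b+1]$ and equal to $1$ on $[b+2, +\infty)$, yields $\mathcal{E}_R(w_R) \leq \mathcal{E}_R(\phi) \leq C_0 \varepsilon$ with $C_0 = \tfrac12\|\chi'\|_\infty^2 + \max_{[0,1]}|F|$ depending only on $\chi$ and $f$.

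The key step is then to pass from this scalar energy bound to the pointwise threshold $\sup_{y \in \omega(b)} w_R(b, y) \leq \theta - \eta$, for some $\eta > 0$ independent of $R$ and all $\varepsilon$ small enough (depending on $S$). I would argue by contradiction: if there were $R_n \to +\infty$ and $y_n \in \omega(b)$ with $w_{R_n}(b, y_n) \to \theta$, elliptic compactness would produce a $C^{2}_{loc}$-limit $w_\infty$ on $\Omega \cap \{x_1 < b+2\}$ solving the PDE, with $w_\infty(b, \bar y) = \theta$ at some $\bar y \in \omega(b)$. The strong maximum principle and Hopf's lemma---applied to the linear equation solved by $w_\infty - \theta$, using $f(\theta) = 0$---would then force $w_\infty \equiv \theta$ on an open neighborhood of the slice $\{x_1 = b\}$. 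This would contribute at least $|F(\theta)|\cdot|S|$ to the energy on that neighborhood (note $F(\theta) < 0$ by \eqref{unbalanced}), contradicting the vanishing of the upper bounds $\mathcal{E}_{R_n}(w_{R_n}) \leq C_0 \varepsilon_n \to 0$; this quantitative balance fixes $\varepsilon = \varepsilon(S)$ in the statement.

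With the threshold secured, I use the supersolution $\bar w(x_1, x_2) := (\theta-\eta)\,e^{\alpha(x_1-b)}$ on $\Omega \cap \{x_1 \leq b\}$, with $\alpha > 0$ chosen so that $\alpha^2 \leq \inf_{s \in (0, \theta-\eta]}\bigl(-f(s)/s\bigr)$; this infimum is strictly positive in the standard bistable case $f'(0) < 0$, and can otherwise be handled by a more careful ODE-based supersolution. The comparison principle yields $w_R \leq \bar w$ on $\Omega_R \cap \{x_1 \leq b\}$, hence $w_R(x_1,\cdot) \to 0$ uniformly as $x_1 \to -\infty$, independently of $R$. A diagonal extraction based on elliptic interior estimates then gives a subsequence of $(w_R)$ converging in $C^{2}_{loc}(\overline{\Omega} \cap \{x_1 \leq b+2\})$ to a function $w$ which solves \eqref{eq block L}, with $0 < w < 1$ from the strong maximum principle and Hopf's lemma. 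The main obstacle is the contradiction step in the third paragraph: upgrading $w_\infty(b, \bar y) = \theta$ to $w_\infty \equiv \theta$ in a neighborhood requires a delicate invocation of the maximum principle in the degenerate situation $f(\theta) = 0$, and it is this step that pins down the allowed size of $\varepsilon$ relative to the reference slice $S$.
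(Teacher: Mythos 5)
Your scheme uses \emph{global} minimizers of $\mathcal{E}_R$ on $\Omega_R$, which is exactly the pitfall the paper points out immediately after Proposition~\ref{prop bbc}: global minimizers converge to $1$ as $R\to+\infty$. Because $f$ is unbalanced, $F(1)=\int_0^1 f>0$; inserting the admissible test function that jumps from $0$ to $1$ on $\{-R<x_1<-R+2\}$ and equals $1$ on the remainder of $\Omega_R$ gives $\inf_{\mathcal{A}_R}\mathcal{E}_R\le C-F(1)\,\big|\Omega\cap\{-R+2<x_1<b+2\}\big|\to-\infty$, so the global minimizer $w_R$ converges locally to $1$. Your bound $\mathcal{E}_R(w_R)\le C_0\varepsilon$ is correct but vacuous, and it cannot deliver the threshold $\sup_{x_2}w_R(b,x_2)\le\theta-\eta$ on which the rest of the argument hinges. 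The paper's remark signals the fix, implemented in \cite{BBC}: one must restrict to \emph{local} minimizers confined to an energy well anchored at the state $0$ to the left of the passage; the smallness of the passage measure guarantees the well is nonempty and traps the minimizing sequence, which the global scheme cannot do.

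The contradiction step does not close either. Negating the threshold claim only gives $\sup_{x_2}w_{R_n}(b,x_2)>\theta-\eta_n$, not $w_{R_n}(b,y_n)\to\theta$; the supremum may jump past $\theta$ all the way to $1$, which is precisely what happens for global minimizers. Even granting an interior contact $w_\infty(b,\bar y)=\theta$, the strong maximum principle and Hopf's lemma give nothing: they require the difference $w_\infty-\theta$ to be of one sign near the contact point, whereas here it changes sign (being close to $-\theta$ for $x_1$ far to the left and close to $1-\theta$ near $x_1=b+2$), so $\bar y$ is merely a zero of a sign-changing solution of a linear elliptic equation, not an extremum. Hence you cannot conclude $w_\infty\equiv\theta$ near the slice, and the purported energetic contradiction between $|F(\theta)|\,|S|$ and $C_0\varepsilon$ never materializes. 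The exponential supersolution in your final paragraph is fine in spirit, but it rests entirely on the missing threshold estimate.
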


This proposition can be extracted from the proof of \cite[Theorem 1.8]{BBC}. 
We shall not redo the proof here, but we  mention the ideas for reader's ease, assuming that $b=0$ for notational simplicity. They rely on the same energy method used here in Section \ref{sec:persistence}, with the energy functional defined on the truncated cylinders $\O \cap \{  -R < x_1 <0 \}$ with boundary conditions $w(-R,\cdot) = 0$, $w(0 , \cdot) = 1$. The idea is then to take the limit $R\to+\infty$, and try to get the last condition of \eqref{eq block L} at the limit. One needs to be cautious there: it is crucial to 
have that the selected minimizers do not converge to $1$ as $R$ goes to $+\infty$, which could happen if one takes global minimizers. The authors take instead local minimizers in a suitable energy well which converge to a solution of \eqref{eq block L}.

Proposition \ref{prop bbc} will allow us to derive the ``blocking'' property $(ii)$ of Theorem~\ref{OI cylinder}
by considering a periodic cylindrical domain $\O$ containing narrow passages which widen very suddenly in the leftward
direction. Conversely, for the ``invasion'' property~$(i)$, we need such passages to open slowly in the 
rightward direction; this will allow us to construct a front-like subsolution by ``bending'' the level sets 
of a planar front. The domain $\O$ is depicted 
%Once the invasion proven, the existence of the positive speed $c_{r}$ will be derived as in Remark \ref{vitesse positive}.
in Figure \ref{fig:1}.

 %and we define $w_{0}(x) = w_{0}(x_{1}) := \frac{\vert x_{1} - a \vert }{b-a}$ if $x_{1} \in [a,b]$ and zero elsewhere.

%Then, one can show that there is $\delta>0$ such that, for $R>b$, if $w \in H^{1}(\mathcal{C}_{R})$ satisfies $\| w -w_{0} \|_{H^{1}_{1,0}}(\mathcal{C}_{R}) = \delta$, then
%$$
%\mathcal{E}_{R}(w) > \mathcal{E}_{R}(w_{0}).
%$$
%This is the Proposition 5.3 of \cite{BBC}. Hence, we can find a function $w_{R}$ that is a \emph{local minimizer} of the energy such that $\|w_{R} -w_{0} \|_{H^{1}(\mathcal{C}_{R})} \leq \delta$. Letting $R$ go to $+\infty$, it is easy to see that $w_{R}$ converges to a solution $w$ satisfying \eqref{eq block L}. 

\begin{figure}[H]
\begin{center}
\includegraphics[scale = 0.47]{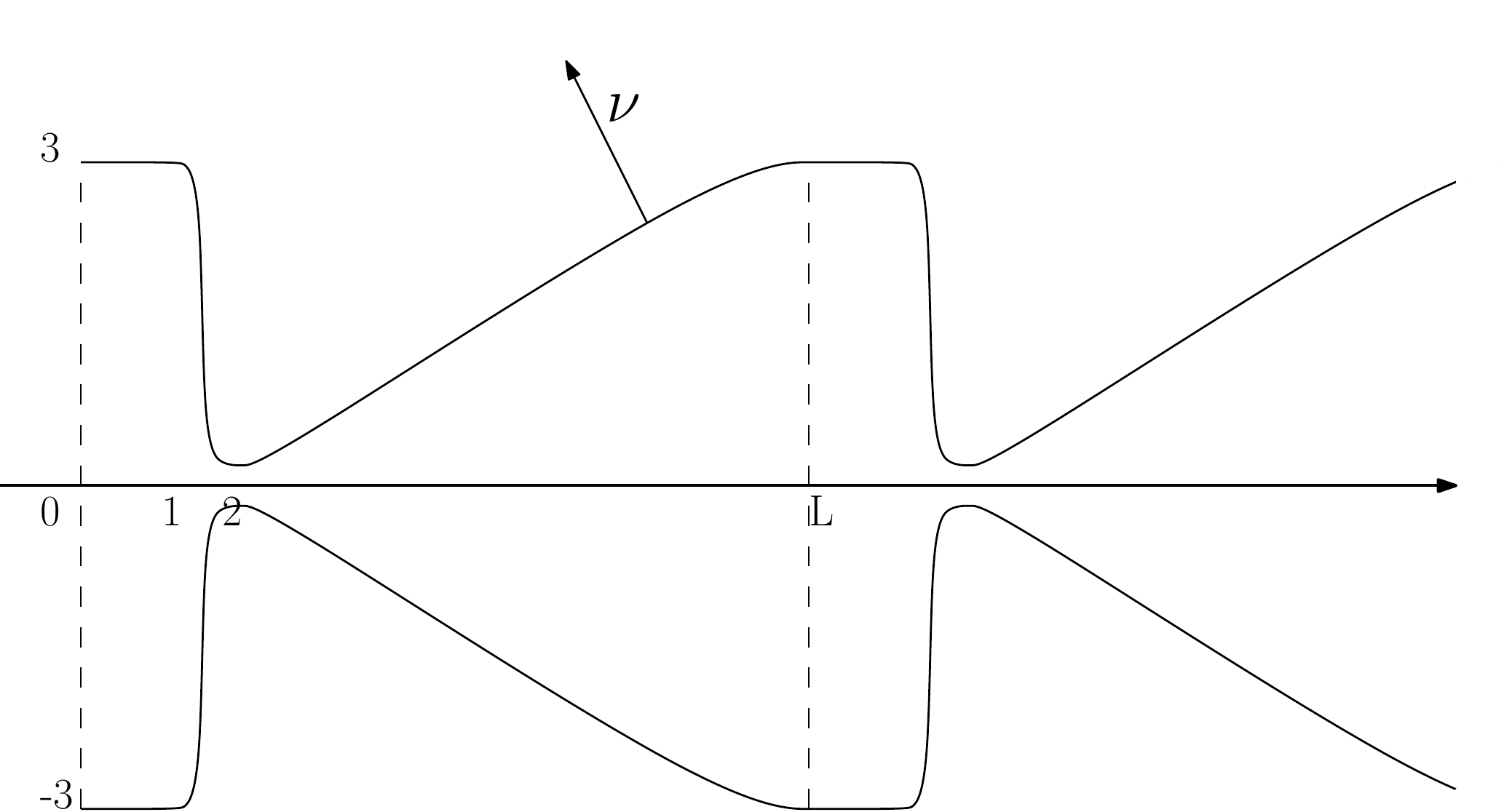}
 \caption{The periodic cylinder}
    \label{fig:1}
    \end{center}
\end{figure}

\begin{proof}[Proof of Theorem \ref{OI cylinder}]
	We define the domain $\O$ as follows:
	$$
	\Omega:= \left\{  (x_1,x_2)\in \mathbb{R}\times\mathbb{R}^{N-1} \ : \ \vert x_2 \vert  \leq \upsilon(x_1)    \right\},
	$$
	where $\upsilon:\R\to(0,+\infty)$ is a $C^\infty$ periodic function, with period $L$ to be chosen,
	satisfying 
	\begin{equation}\label{def omega}
	\begin{cases}
	\upsilon = 2 \quad &\text{ on } \ [0,1], \\
	\upsilon^{\prime} \leq 0 \quad  &\text{ on } \ [1,2], \\
	0 \leq \upsilon^{\prime} \leq \frac{2}{L-4} \quad  &\text{ on } \ [2,L], \\
	\upsilon(\frac{L}{2}) = 1.
	\end{cases}
	\end{equation}
	We further require that
	$$
	\vert \{ (x_1 , x_2 ) \ : \ x_1 \in [1,2], \ \vert x_2 \vert \leq \upsilon(x_1)     \}\vert \leq \e,
	$$
	with $\e$ given by Proposition \ref{prop bbc}, with $b=0$. Recall that $\e$ does not depend on $L$. 
	We have that $\min \upsilon = \upsilon(2)$. 
	We shall take $L$ very large so that the cylinder ``opens slowly" to the right, which will ensure property $(i)$.  
	
	The reminder of the proof is divided into three~steps. 

\medskip
\emph{Step 1. Building a subsolution. } \\
This step is dedicated to the construction of a subsolution to \eqref{evol homogeneous} moving rightward, 
that will be used to prove that invasion occurs in this direction. 
We consider a  perturbation $\t f$ of the nonlinearity $f$,
the latter being extended for convenience by $0$ to $(-\infty,0)$. 
Namely $\t f$ is Lipschitz-continuous, it satisfies, for some $\mu \in (0,1-\theta)$ small enough,
$$\t f(-\mu) = \t f(1-\mu) =0,\qquad
\t f<f \ \text{ in }\ (-\mu , \theta),\qquad
0<\t f\leq \frac1{1+\mu}f \ \text{ in }(\theta , 1-\mu),$$ 
and it is still positively unbalanced (between $-\mu$ and $1-\mu$):
$$\int_{-\mu}^{1-\mu}\t f(s)ds>0.$$
Let $\phi(x_1-c't)$ be the (unique up to shift) traveling front 
for the $1$-dimensional equation $\partial_tu=\partial_{11}u+\t f(u)$, connecting
$1-\mu$ to $-\mu$ with speed $c'>0$, provided by~\cite{AW}. Namely, the profile $\phi$ satisfies
$$
\phi^{\prime\prime} + c' \phi^{\prime} + \t f(\phi) =0\quad\text{in }\R,
$$
with $\phi(-\infty)=1-\mu$, $\phi(+\infty)=-\mu$.
We then take $c \in (0,c')$ and define 
$$
\psi(t,x_1,x_2) := \phi(x_1+\eta\vert x_2\vert^{2}  -ct ),
$$
with 
$$\eta := \min \bigg(\frac{c'-c}{2(N-1)} , \frac{\sqrt{\mu}}{4}\bigg).$$
Let us show that $\psi$ is a subsolution of \eqref{evol homogeneous} provided $L$ is sufficiently large. 
Direct computation gives us 
\[
\begin{split}
\partial_{t}\psi - \Delta \psi &=-c\phi'-\big(1 +4\eta|x_2|^2\big)\phi''-2\eta(N-1)\phi'
\\
%&=\big((1 +2\eta(N-1))c-c'\big)\phi'\t f(\psi)\\
&=\big(c'(1+4\eta^{2}\vert x_2\vert^{2})-c-2\eta(N-1) \big)\phi^{\prime} + 
(1+4\eta^{2}\vert x_2\vert^{2})\t f(\psi)\\
&\leq (c'-c-2\eta(N-1) )\phi^{\prime} +  (1+4\eta^{2}\vert x_2\vert^{2})\t f(\psi).
\end{split}
\]
The first term above is non-positive by the definition of $\eta$ and the negativity of~ $\phi'$.
Concerning the second term,  on one hand, if $\psi\leq \theta $ then 
$(1+4\eta^{2}\vert x_2\vert^{2})\t f(\psi) \leq f(\psi)$.  
On the other,
if $\psi\geq \theta$, since $\vert x_2\vert \leq \max \upsilon = 2$ and $\eta\leq\sqrt{\mu}/4$, we find that
$$
(1+4\eta^{2}\vert x_2 \vert^{2})\t f(\psi)\leq 
(1+\mu)\t f(\psi) \leq f(\psi).
$$
Therefore, $\psi$ is a subsolution of the first equation in \eqref{evol homogeneous}. Let us check the boundary condition. Observe that the unit exterior normal to $\O$ at $(x_1 , x_2) \in \partial \O$ is positively collinear to $(-\upsilon^{\prime}(x_1) , \frac{x_2}{\vert x_2 \vert})$. Recalling that 
$\upsilon'\leq \frac{2}{L-4}$
and that
$|x_2|\geq\min\upsilon = \upsilon (2)$, we see that, for $t\in \R$ and $(x_1,x_2) \in \partial \O$,
\[
\begin{split}
\left\vert  \left(-\upsilon^{\prime}(x_1) , \frac{x_2}{\vert x_2 \vert}\right) \right\vert \partial_{\nu}\psi(t,x_1,x_2)  &= \big(-\upsilon^{\prime}(x_1)+2\eta \vert x_2 \vert\big)\phi^{\prime} \\
&\leq \left(-\frac{2}{L-4}+2\eta\upsilon(2)\right)\phi^{\prime}.
\end{split}\]
Hence, $\psi$  is indeed a subsolution of \eqref{evol homogeneous} 
for $L$ sufficiently large.

\medskip
\emph{Step 2. Invasion to the right.} \\
Take $R>0$ large enough so that there is a positive solution $u_{R}$ of \eqref{tronque}. Owing to~\eqref{large}, we can increase $R$ to have
\begin{equation}\label{square R}
\forall x_2 \in [-2,2],\quad
u_{R}(0,x_{2}) \geq 1-\mu.
\end{equation}
Let $u$ denote the solution of \eqref{evol homogeneous} with initial datum $u_R(\cdot -\frac{L}{2} e_1)$, 
extended by $0$ and restricted to $\Omega$. 
If $L>2R+4$, we have that $\upsilon'\geq 0$ in $(\frac{L}{2}-R,\frac{L}{2}+R)$, 
that is, the support of the initial datum does not touch any ``narrow passage".
Let us check that $u_R(\cdot - \frac{L}{2}e_1)$ is a generalized stationary subsolution of \eqref{evol homogeneous}. 
We just need to verify the boundary condition. We have seen at the beginning of Section \ref{Oriented invasion}
that this in turn reduces
to check condition \eqref{condition subsolution}.
For  $(x_1,x_2) \in \partial \O\cap \ol B_R(\frac{L}{2}e_1)$, 
recalling that $\nu(x)$ is positively collinear to $(-\upsilon^{\prime}(x_1) , \frac{x_2}{\vert x_2 \vert})$, with $|x_2|=\upsilon(x_1)$,
we find that
\[
\begin{split}
\left\vert  \left(-\upsilon^{\prime}(x_1) , \frac{x_2}{\vert x_2 \vert}\right) \right\vert\left(\frac{L}{2}-x_1,-x_2\right)\.\nu(x) 
&= \upsilon^{\prime}(x_1)\left(x_1 - \frac{L}{2}\right) - \upsilon(x_{1}) \\
&    \leq \frac{2}{L-4}R -  \upsilon(x_{1}).
\end{split}
\]
Now, because $(x_1,x_2) \in \ol B_R(\frac{L}{2}e_1)$, we deduce
 that $\upsilon(x_1)\geq \upsilon(\frac{L}{2}) -  \frac{2}{L-4}R= 1-  \frac{2}{L-4}R$, and therefore
 $$
\left\vert  \left(-\upsilon^{\prime}(x_1) , \frac{x_2}{\vert x_2 \vert}\right) \right\vert\left(\frac{L}{2}e_1-x\right)\.\nu(x) \leq \frac{4}{L-4}R -  1.
$$
Then, for $L$ sufficiently large, condition \eqref{condition subsolution} holds and thus $u_R (\cdot - \frac{L}{2}e_1)$ is a generalized stationary subsolution of \eqref{evol homogeneous}. As a consequence, $u(t,\cdot,\cdot)$ is increasing with respect to $t$ and, because of \eqref{square R}, there holds
 $$
  \forall t \geq 0, \ \forall |x_2|\leq\upsilon\Big(\frac L2\Big) ,
   \quad u\Big(t,\frac L2,x_2\Big) \geq 1-\mu.
 $$
 We can now fix $L$ large enough so that the the above property holds and that the
 function $\psi$ defined in the first step is a subsolution.
Define 
$$
\O^{+} := \left\{ (x_1,x_2)\in \O \ : \ x_1\geq \frac{L}{2}\right\}.
$$
We have that $\psi(t,\frac{L}{2},x_2)  \leq u(t,\frac{L}{2},x_2) $, for all $t\geq0, \ \vert x_2 \vert \leq \upsilon(\frac{L}{2})$. Moreover, up to translation in time of $\psi$, we can assume that
$$
 \forall (x_1,x_2) \in \O^{+}, \quad \psi(0,x_1,x_2)\leq u(0,x_1,x_2).
$$
Hence, the parabolic comparison principle implies that $\psi(t,x_1,x_2)\leq u(t,x_1,x_2)$ for $t\geq 0,\ (x_1,x_2) \in \O^{+}$. 
Recalling that $\psi(t,x_1,x_2) := \phi(x_1+\eta\vert x_2\vert^{2}  -ct )$, we derive
$$
\forall \gamma \in (0,c),\quad
\lim_{t\to +\infty}\left( \min_{\substack{ 0\leq x_1\leq \gamma t  \\   \vert x_{2} \vert \leq \upsilon(x_{1})   }}  u(t,x_1,x_2)\right) \geq 1-\mu.
$$
Take now $0<c_1<c_2 <c$ and consider a diverging sequence of times $(t^n)_{n\in\N}$ and a sequence $(x_1^n)_{n\in\N}$ in $\R$ such that $c_1t^n\leq x_1^n\leq c_2 t^n$. Consider the sequence $(k_n)_{n\in \mathbb{N}}$ in $\Z$  for which
	$0\leq x_1^n -k_n L<L $. Then, $k_n \to +\infty$ as $n$ goes to $+\infty$. 	We define the translated functions	
$$
u^{n} := u(\cdot+t^n,\cdot+k_n L, \cdot).
$$
The sequence $(u^{n})_{n\in \mathbb{N}}$ converges as $n$ goes to $+\infty$ (up to extraction)
	locally uniformly in $t\in\R$, $(x,y) \in \ol\O$,
	to a function
	$ u^{\infty}$ which is an entire solution of \eqref{evol homogeneous}. Now, take $\gamma \in (c_2 , c)$. For $t\in \mathbb{R}$ and $(x,y)\in \O$, we have, for $n$ large enough
	$$
	0<x + k_n L \leq x + c_2 t^n <\gamma (t +  t^n),
	$$
	from which we deduce
$$
\forall t\in \mathbb{R}, \ \forall (x,y) \in \O , \quad  u^{\infty}(t,x) \geq 1-\mu>\theta.
$$
Owing to Lemma \ref{ODE}, we see that $u^{\infty} \equiv 1$, i.e., $u^n$ converges to $1$ locally uniformly in $(x,y) \in \ol\O$, as $t$ goes to $+\infty$. Then, statement $(i)$ of the theorem holds for the solution $u$ with initial datum $u_R(\cdot - \frac{L}{2})$. One then recovers the class of initial data stated in the theorem
by arguing as in the proof of Theorem \ref{th persistence} in Section \ref{sec:persistence}.

\medskip
\emph{Step 4. Blocking.} \\
We make use the result of \cite{BBC}, Proposition \ref{prop bbc} above. Let $w$ be the function given by Proposition \ref{prop bbc} applied to $\O$ with $b = 0$ (we recall that we chose $\e$ so that this was possible). We extend $w$ by setting $w(x_1,\cdot)=1$ for $x_1 \geq 2$. Then, $w$ is a generalized supersolution of \eqref{evol homogeneous}, and so is $w(\cdot+kL,\cdot)$, where $k\in \mathbb{Z}$ and $L$ is the period of our cylinder. Then, if $u_{0}\leq1$ is compactly supported, we can find $k\in \mathbb{Z}$ such that $u_{0} \leq w(\cdot + kL , \cdot)$. The parabolic comparison principle yields that $u(t,x_1,x_2)\leq w(x_1+kL,x_2)$, for all $t>0, \ (x_1,x_2)\in \O$. 
The blocking property for $u$ then follows from the last condition in 
\eqref{eq block L}.
\end{proof}

\subsection{Oriented invasion in a periodic domain}

This section is dedicated to the proof of Theorem \ref{th oriented}. It is more involved than the proof of Theorem \ref{OI cylinder}, and we shall proceed in several steps. First, in Section~\ref{design domain}, we design the periodic domain $\Omega_{3} \subset \mathbb{R}^{2}$. Then, in Section \ref{preliminary lemmas}, we state some auxiliary lemmas, used in Section \ref{proof oriented invasion} to prove the oriented invasion property. In the whole section, we call $e_{1} := (1,0)$ and $e_{2} := (0,1)$ the unit vectors of the canonical basis.

\subsubsection{Designing the domain $\Omega_{3}$}\label{design domain}
%%% Ne pas mettre ça en sous section, enlever le titre ?

We shall take $L_1, L_2>0$ large enough and a compact set $K \subset \mathbb{R}^{2}$, and we define 
\begin{equation}\label{definition domain}
%\Omega_{3}^{c} := \bigcup_{k_{1},k_{2} \in \mathbb{Z}} K +\left\{ k_{1}L_{1}e_{1} +  k_{2}L_{2}e_{2}\right\}.
\Omega_{3}:= \Big( K +L_{1}\mathbb{Z}\times L_{2}\mathbb{Z}\Big)^{c}.
\end{equation}
The domain $\Omega_{3}$ we have in mind is depicted in Figure \ref{fig:2}. 

\begin{figure}[H]
\begin{center}
\includegraphics[scale=0.65]{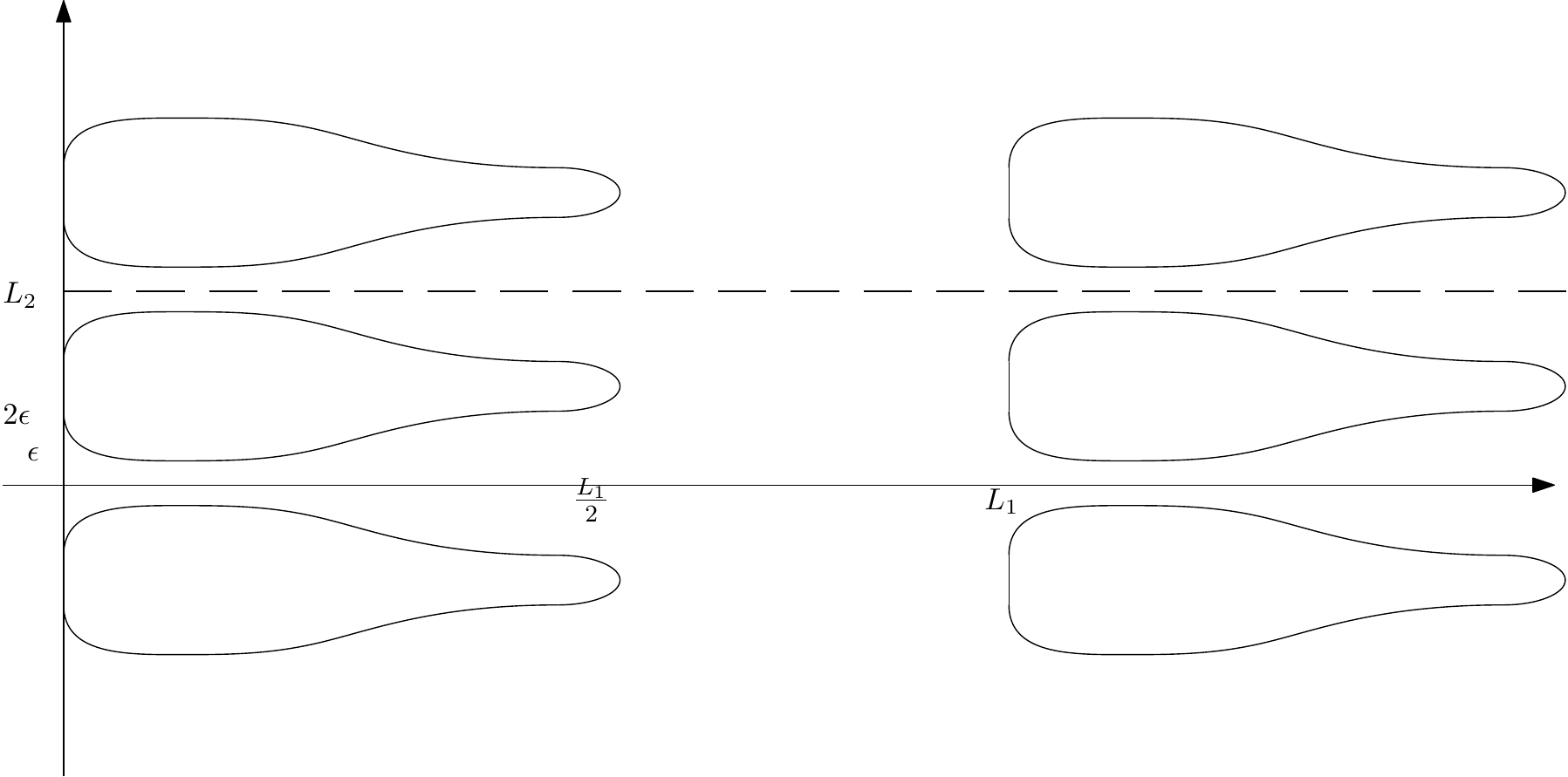}	
 \caption{The periodic domain $\Omega_{3}$}
    \label{fig:2}
    \end{center}
\end{figure}

The idea is to choose this domain in such a way that the narrow passages open in an abrupt way to the left, so that propagation will be blocked in this direction, but gently to the right, so that the solution will be able to pass, as in the case of the periodic cylinder 
of Section~\ref{sec:cyl}.

We shall build a function $h\in C([0,\frac{L_{1}}{2}])\cap C^\infty((0,\frac{L_{1}}{2}))$ to parametrize the boundary of $K$. More specifically, we define $K \subset [0, L_{1})\times[0,L_{2})$ by
$$K\cap(\{0\}\times\R)=\{0\}\times[2\varepsilon,L_{2}-2\varepsilon],$$
and
	$$\partial K \cap\left(\left(0,\frac{L_1}{2}\right]\times\left[0,\frac{L_{2}}{2}\right]\right)=\left\{(s,h(s))\ :\ s\in\left[0,\frac{L_1}{2}\right]\right\},$$
then reflected by symmetry with respect to the line $\{x\.e_2=L_2/2\}$:	
	$$\partial K \cap\left(\left(0,\frac{L_1}{2}\right]\times\left[\frac{L_{2}}{2},L_{2}\right]\right)=\left\{(s,L_{2}-h(s))\ :\ s\in\left[0,\frac{L_1}{2}\right]\right\}.$$
For $\O_3$ to be smooth, we need $h^{(n)}(0^+)=-\infty$ and $h^{(n)}(\frac{L_1}{2}^{-})=+\infty$, for any $n\geq1$.

Let $\e\in (0,1)$, $\kappa >0$, to be chosen later. We define $h$ as follows: first, on $[0,\frac{3}{\e^2}]$, we set
\begin{equation}\label{hyp h}
\begin{cases}
 h(0) =2\varepsilon,  \\
 h'(s) \leq 0,  \quad &s\in(0,1], \\
  h(s) =\varepsilon,    \quad &s \in [1,\e^{-2}], \\
0 \leq h'(s) \leq \e^2,   \quad &s \in [\e^{-2}, 2\e^{-2}],\\
h(s) =1,  \quad &s\in [2\e^{-2},3\e^{-2}].
\end{cases}
\end{equation}
%
%\begin{equation}\label{hyp h}
%\left\{
%\begin{array}{rll}
% h(0) &=2\varepsilon,  \\
% h'(s) &\leq 0,  \quad &\text{ for } s\in(0,1], \\
%  h(s) &=\varepsilon,    \quad &\text{ for } s \in [1,\e^{-2}], \\
%0 \leq h'(s) &\leq \e^2,   \quad &\text{ for } s \in [\e^{-2}, 2\e^{-2}],\\
%h(s) &=1,  \quad &\text{ for } s\in [2\e^{-2},3\e^{-2}].\end{array}
%\right.
%\end{equation} 
%
Now, to define $h$ on $[\frac{3}{\e^2}, \frac{L_1}{2}]$, we introduce the following 
cut-off function $\chi \in C^{\infty}([0,1))$ such that
\begin{equation}\label{hyp khi}
\begin{cases}
\chi^{(n)}(0) =0,  \quad &n \geq 0, \\
\chi^{\prime}(s) \geq 0, \quad &s \in [0,1), \\
\chi(1^-) = 1, \ \chi^{(n)}(1^-) =+\infty,  \quad &n \geq 1,
\end{cases}
\end{equation}
%
%\begin{equation}\label{hyp khi}
%\left\{
%\begin{array}{lll}
%\chi^{(n)}(0) &=0,  \quad &\text{ for }\ n \geq 0, \\
%\chi^{\prime}(s) &\geq 0, \quad &\text{ for }\ s \in [0,1), \\
%\chi(1) = 1, \ \chi^{(n)}(1) &=+\infty,  \quad &\text{ for }\ n \geq 1,
%\end{array}
%\right.
%\end{equation}
and we set
$$
h(s) = 1+ \kappa \chi\left(\frac{1}{\kappa}\left(s - \frac{3}{\e^2}\right)\right) \quad \text{ for }\ s \in \left[\frac{3}{\e^2},\frac{3}{\e^2}+ \kappa\right].
$$
Finally, we define
$$
L_1 := 2\left(\frac{3}{\e^2} + \kappa\right) \ \text{ and } \ L_2 := 2(1+\kappa).
$$
The graph of the function $h$ is depicted in Figure \ref{fig:h}.
\begin{figure}[H]
\begin{center}
\includegraphics[height=6.5cm]{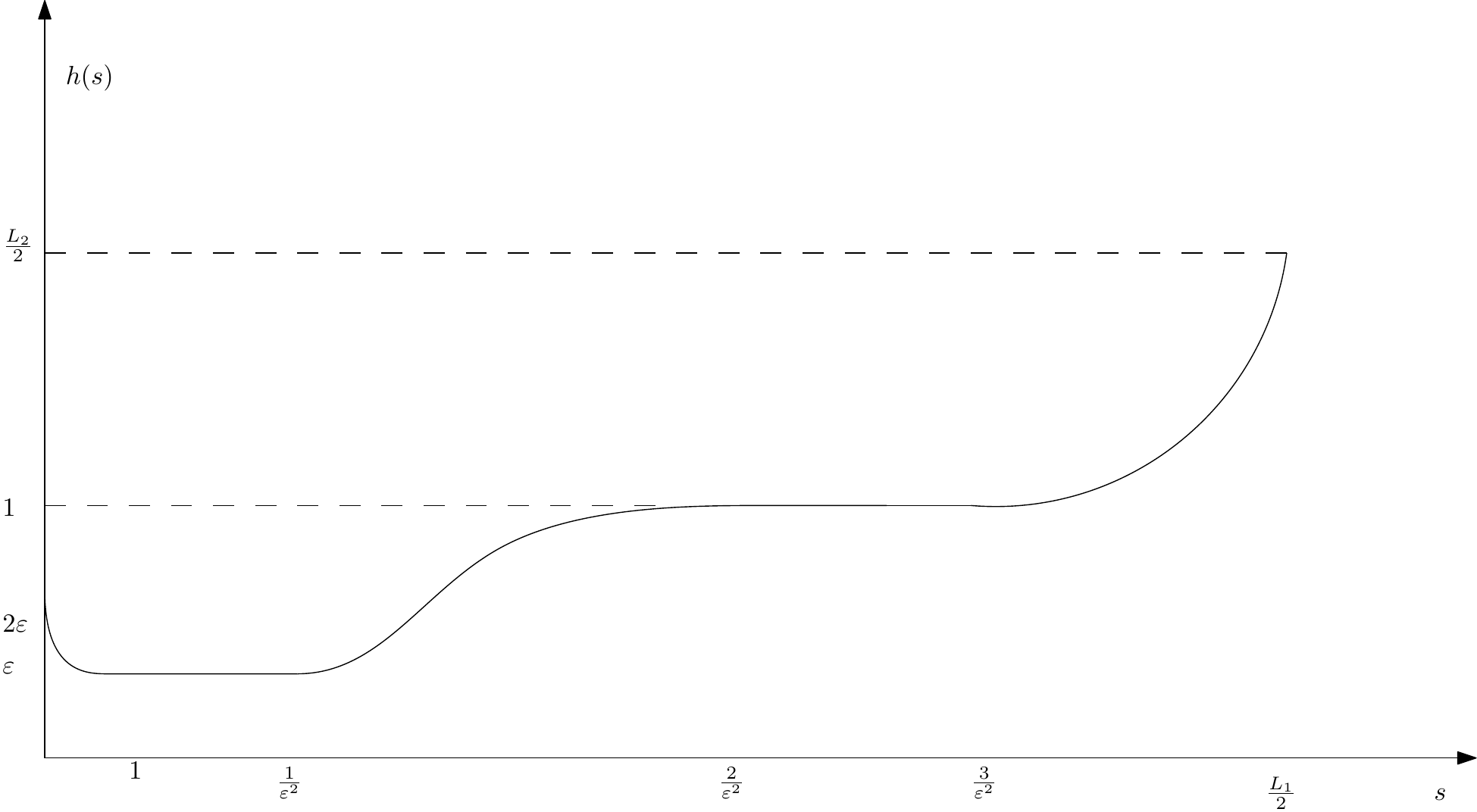}	
 \caption{Graph of the function $h$}
    \label{fig:h}
    \end{center}
\end{figure}   
We have defined the domain $\O_3$, depending on $\e, \kappa$. Let us see how we choose these parameters. To 
start with, we take $R>0$ large enough so that there is a positive solution $u_{R}$ of \eqref{tronque}. Thanks to \eqref{large} we can choose
$R>1$
 in such a way~that
\begin{equation}\label{inner estimate}
M := \min_{x \in \ol B_{1}}u_{R}(x) \in(\theta,1).
\end{equation}
Next, we take $\kappa$ large enough and $\e$ small enough so that $\O_3$ satisfies the following exterior ball condition at  every $x\in (\partial \O)\cap (  [\frac{2}{\e^2}, \frac{L_1}{2}] \times \R)$:
\begin{equation}\label{h 1}
\exists y \in \O_3^c \ \text{ such that } \  \ol B_{R^{2}}(y)\cap \ol \O_3 = \{ x \}.
\end{equation}
(Observe that $\kappa$ acts as a sialation in the definition of $h$.)
Now, Proposition \ref{prop bbc} applied to the periodic cylinder
$$
\tilde{\Omega}_{3} :=  \O_3\cap(\R\times (0 , L_{2}))
$$
and $b = -1$, yields that, if the measure of $\tilde\Omega_3\cap\big((0,1)\times\R\big)$
is small enough, then problem \eqref{eq block L} admits a solution in the truncated cylinder 
${\Omega}_{3}\cap\big((-\infty,1)\times(0,L_2)\big)$. Since this measure is smaller than $4\e$,
up to decreasing $\e$ we assume that this condition is fulfilled. 
We further increase $\kappa$ and decrease $\varepsilon$ so that
\begin{equation}\label{h prime}
\kappa\geq 4R \quad\text{and}\quad R \leq \frac{1}{2\e}.
\end{equation}

\subsubsection{Invasion towards right}\label{preliminary lemmas}

In this subsection, the domain $\O_3$ and the constant $R>1$ are the ones constructed before, and
$u_{R}$ is the solution to \eqref{tronque} extended by $0$ outside $\ol B_R$. We recall that it is radially symmetric and decreasing
and satisfies \eqref{inner estimate}.
We let $\mc{C}$ denote the following periodicity cell:
$$
\mathcal{C} := [2R,L_{1}+2R]\times  \left[-\frac{L_{2}}{2},\frac{L_{2}}{2}\right].
$$

Here is the key result to prove statement $(i)$ of Theorem \ref{th oriented}.
\begin{proposition}\label{prop oriented invasion}
Let $u$ be the solution of \eqref{evol homogeneous}
emerging from the initial datum~$u_{R}(\.-2Re_1)$. Then, there is $T>0$ such that, for any $n \in \mathbb{N}$, there holds
%$$
%u(t,x) \geq M>\theta \quad \text{ for }\  t \geq nT,\ x \in \Omega_{3}\cap  \bigcup_{\substack{ a \in \{ 0, \ldots,n\}  \\ b \in \{ -a, \ldots, a \}}} \left( \mathcal{C}+aL_{1}e_{1} + bL_{2}e_{2}  \right).
%$$
$$
u(t,x) > M>\theta \quad \text{ for }\  t \geq (n+1)T,\ x \in \ol\Omega_{3}\cap  
\bigcup_{\substack{a\in\N\cup\{0\},\ b\in\Z  \\ 
		a+|b|\leq n}} \left( \mathcal{C}+\{aL_{1}e_{1}+bL_{2}e_{2}\} \right).$$
\end{proposition}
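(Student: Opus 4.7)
The plan is to proceed by induction on $n$, with $T > 0$ a constant time representing one step of cell-by-cell propagation. The base case $n=0$ is a direct sliding argument starting from the subsolution $u_R(\cdot - 2Re_1)$; the inductive step uses the periodicity of \eqref{evol homogeneous} to replay this argument seeded in each edge cell of the level-$n$ diamond, pushing the ``$u>M$'' bound one lattice step outward.

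For the base case, I would first verify that $u_R(\cdot - 2Re_1)$, extended by $0$ on $\Omega_3 \setminus \ol B_R(2Re_1)$, is a generalized stationary subsolution of \eqref{evol homogeneous}. Since $R > 1$ and $R \leq 1/(2\e)$, the portion of $\partial \Omega_3$ lying in $\ol B_R(2Re_1)$ falls in the range $s \in [R, 3R] \subset [1, \e^{-2}]$, where $h' \equiv 0$ and $h \equiv \e$, so that $\nu(x) = (0,1)$ and $(2Re_1 - x) \cdot \nu(x) = -\e < 0$, verifying \eqref{condition subsolution}. By parabolic comparison and monotonicity in $t$, the solution $u(t, \cdot)$ converges locally uniformly to a stationary $u_\infty \geq u_R(\cdot - 2Re_1)$. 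I would then apply the sliding argument of Theorem \ref{th invasion bistable}, pushing $z$ along continuous paths in $\Omega_3$ starting at $2Re_1$ so as to sweep the whole cell $\mc{C}$: paths navigate freely inside open regions where $\ol B_R(z) \subset \Omega_3$, and approach obstacle boundaries via the exterior-ball condition \eqref{h 1} (secured by taking $\kappa$ large and $\e$ small). This yields $u_\infty \geq u_R(\cdot - z) \geq M$ on each $B_1(z)$, hence $u_\infty \geq M$ on $\ol{\Omega_3} \cap \mc{C}$; Hopf's lemma then upgrades this to strict inequality, and local uniform convergence on the compact set $\ol{\Omega_3} \cap \mc{C}$ produces the desired $T$.

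For the inductive step, the hypothesis $u((n+1)T, \cdot) > M$ on the level-$n$ diamond allows me to consider each edge cell $\alpha$ and try to extract a subsolution supported in it. The subtlety is that $M$ may be strictly smaller than $\max u_R$, so $u > M$ does not immediately dominate $u_R(\cdot - z_\alpha)$. I would bridge this by comparing $u$ from time $(n+1)T$ onward with the solution $V$ of \eqref{evol homogeneous} arising from $M \cdot \mathbf{1}_{\mc{C}_\alpha}$: since $M > \theta$, a standard ODE-type lower bound (comparison with the spatially-constant subsolution $\tilde z$ solving $\dot{\tilde z} = \min_x f(\cdot, \tilde z)$, $\tilde z(0) = M$) shows that $V$ exceeds $\max u_R$ after a universal time $s^*$ on any compact subset of the interior of $\mc{C}_\alpha$---in particular on a ball $B_R(z_\alpha) \subset \mc{C}_\alpha \cap \Omega_3$ placed in the wide portion of the cell (which is possible since $L_2 = 2(1+\kappa) \gg R$ by \eqref{h prime}). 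Then $u \geq u_R(\cdot - z_\alpha)$, and the translated base-case argument propagates $u > M$ into every forward-adjacent cell of $\alpha$ after a further base-case time. A uniform $T$, independent of $n$ and absorbing both the base-case time and $s^*$, closes the induction.

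The main obstacle is verifying the sliding argument in the asymmetric geometry of $\Omega_3$. Since $R > 1 > \e$, the ball $\ol B_R$ does not fit through the narrow tubes (of width $\sim \e$) connecting vertically adjacent rows, so a sliding path from one cell to a vertical neighbor must detour horizontally through the wide-open region around $s = L_1/2$, where adjacent obstacles come to sharp tips with vertical tangent and the free vertical space spans the full $L_2$. Validating \eqref{condition subsolution} along this detour---especially near the sharply opening right boundary of $K$ on $s \in [3/\e^2, L_1/2]$---is precisely what the exterior-ball condition \eqref{h 1} is engineered to provide. Crucially, the same geometric feature that permits rightward and vertical sliding forbids the analogous leftward sliding, because the left boundary of $K$ closes abruptly and \eqref{condition subsolution} would fail there; this asymmetry is the root cause of the oriented invasion claimed in Theorem \ref{th oriented}.
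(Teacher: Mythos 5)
Your proposal has the right overall shape---a one-step propagation estimate on the cell $\mc{C}$, then iterated by periodicity and the comparison principle---and you correctly identify the geometric asymmetry of $\O_3$ as the driving force behind the oriented invasion. Nonetheless, two pieces are missing or flawed.

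First, the sliding argument alone cannot ``sweep the whole cell $\mc{C}$''. The slidable set $\mc{S}$ of Lemma~\ref{slidable} is a strict subset of $\ol\O_3\cap\mc{C}$: for $z=\lambda e_1$ with $\lambda\in(0,2R)$ (equivalently $\lambda\in(L_1,L_1+2R)$ after translation by periodicity), the ball $\ol B_R(z)$ meets the flat back of the obstacle at $x_1\equiv 0\pmod{L_1}$, where the exterior normal points in the $e_1$ direction so that $(z-x)\cdot\nu(x)=\lambda>0$, and condition~\eqref{condition subsolution} fails. The exterior-ball condition~\eqref{h 1} does not rescue this: it is engineered only near the pointed tip of $K$, at $x_1\in[2\e^{-2},L_1/2]$, not near the flat back. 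The paper's Step~2 circumvents the obstruction by a genuinely different mechanism: one fixes the center of the subsolution at $L_1e_1$ (where \eqref{condition subsolution} is satisfied even for the enlarged ball $\ol B_{4R}$, because $h$ is non-increasing on $[0,4R]$) and lets the subsolution grow \emph{in time} via the parabolic flow $v_{4R}$ of Lemma~\ref{lem:vR}, whose limit exceeds $M'=u_R(0)>M$ throughout $B_{3R}(L_1e_1)$. This ``jump above the narrow passage'' is the key idea your proof plan omits; without it, the first pass already fails to cover $\ol\O_3\cap\mc{C}$.

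Second, the ``ODE bridge'' in your inductive step is invalid as written. Comparing the solution $V$ with initial datum equal to $M$ on $\mc C_\alpha$ and $0$ elsewhere against the spatially constant subsolution $\tilde z$ requires $V(0,\cdot)\geq\tilde z(0)=M$ on \emph{all} of $\O_3$, which is false outside $\mc C_\alpha$. Mass diffuses out of the compact set $\mc C_\alpha$, and the parameters chosen in the construction do not guarantee that $\mc C_\alpha$ contains a ball whose radius matches the $r$ needed in Theorem~\ref{th persistence} for $\eta=M$, so there is no universal $s^\star$. The paper avoids any such bridge altogether: Step~3 establishes the stronger one-step inequality $u_\infty(\cdot+z)>u_R(\cdot-2Re_1)$ for $z\in\{0,L_1e_1,\pm L_2e_2\}$ (property~\eqref{a montrer 2}), i.e., a direct comparison of the shifted limit with the \emph{initial datum}, not with the level $M$. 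This is exactly the quantity that iterates cleanly: after picking a finite $T$ from the locally uniform convergence $u(t,\cdot)\to u_\infty$ on a compact set, one gets $u(\cdot+T,\cdot+z)>u$ for the four shifts $z$, and repeated composition yields the stated estimate. You should adopt this comparison-with-the-initial-datum structure, together with the jumping step, in place of the ODE bridge.
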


The proof of this proposition is achieved through a series of intermediate lemmas.
The first two concern some geometric properties of $\Omega_{3}$.

\begin{lemma}\label{sliding1}
Let $\lambda \in [2R,3\e^{-2} - R]\cup [L_1-R,L_1]$. Then property \eqref{condition subsolution} holds with $z=\lambda e_{1}$,
and therefore $u_{R}(\cdot - \lambda e_{1})$  is a generalized subsolution
to \eqref{evol homogeneous}.
\end{lemma}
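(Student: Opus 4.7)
The proof reduces to checking the geometric star-shaped condition \eqref{condition subsolution}: for every $x \in (\partial \Omega_3) \cap \overline{B_R(\lambda e_1)}$, one must have $(\lambda e_1 - x) \cdot \nu(x) \leq 0$. By the symmetry of both the ball and $\Omega_3$ under $x_2 \mapsto -x_2$, it suffices to verify the condition on boundary pieces with $x_2 \geq 0$. On the bottom curve $\{(s, h(s)) : s \in [0, L_1/2]\}$ of any horizontal translate of $\partial K$, the outward normal is $\nu = (-h'(s), 1)/\sqrt{1 + h'(s)^2}$, and the desired inequality reduces to $(s - \lambda) h'(s) \leq h(s)$ (or the analogous inequality with $\lambda$ shifted by a multiple of $L_1$ when looking at a horizontal translate).

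For $\lambda \in [2R, 3\e^{-2} - R]$, the ball sits inside $[R, 3\e^{-2}] \times [-R, R]$. Inspection of translates rules out every part of $\partial \Omega_3$ except the bottom curve of $K$ and the top curve of $K - L_2 e_2$ (the vertical segments at $x_1 \in L_1 \Z$ and every other translate lie too far in $x_1$ or $x_2$, using in particular $L_1 > 3\e^{-2}$). Since $\lambda - R \geq R > 1$, the sub-interval $s \in [0,1]$ cannot contribute. On $[1, \e^{-2}] \cup [2\e^{-2}, 3\e^{-2}]$, the vanishing $h' = 0$ makes the inequality trivial. On the gentle opening region $[\e^{-2}, 2\e^{-2}]$, where $0 \leq h'(s) \leq \e^2$ and $|s - \lambda| \leq R$, we get $(s-\lambda)h'(s) \leq R\e^2 \leq \e/2 \leq h(s)$, where we crucially use the hypothesis $R \leq 1/(2\e)$ from \eqref{h prime}.

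For $\lambda \in [L_1 - R, L_1]$, the bound $\kappa \geq 4R$ from \eqref{h prime} forces $L_1 - 2R > L_1/2$, so the ball lies strictly to the right of $x_1 = L_1/2$ and can meet only boundary pieces of $K + L_1 e_1$ and $K + L_1 e_1 - L_2 e_2$. The left vertical segment $\{L_1\} \times [2\e, L_2 - 2\e]$ of $\partial(K + L_1 e_1)$ has outward normal $+e_1$, giving $(\lambda e_1 - x) \cdot \nu = \lambda - L_1 \leq 0$. On the bottom curve $(L_1 + s', h(s'))$, the ball constraint forces $s' \in [0, R]$, so $L_1 + s' - \lambda \geq 0$; for $s' \in [0,1]$, $h'(s') \leq 0$ makes $(L_1 + s' - \lambda) h'(s') \leq 0 \leq h(s')$, and for $s' \in [1, R]$ (which lies inside $[1, \e^{-2}]$ because $R \leq 1/(2\e) \ll \e^{-2}$), $h'(s') = 0$ trivializes the inequality.

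The main obstacle is organizational rather than analytical: one must carefully enumerate which translates of $\partial K$ actually intersect the ball, then invoke the correct sub-case of the piecewise definition \eqref{hyp h} of $h$. Once this bookkeeping is in place, the quantitative hypotheses $R \leq 1/(2\e)$ and $\kappa \geq 4R$ incorporated into the construction of $\Omega_3$ do precisely the work needed, and the remaining verifications are short algebraic checks. The desired conclusion that $u_R(\.-\lambda e_1)$ is a generalized subsolution to \eqref{evol homogeneous} then follows from the discussion preceding \eqref{condition subsolution}.
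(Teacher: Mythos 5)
Your proof is correct and follows essentially the same approach as the paper: reduce \eqref{condition subsolution} to the scalar inequality $(s-\lambda)h'(s)\leq h(s)$ along the boundary curve, enumerate which translates of $\partial K$ the ball can touch, and close the estimate on the ``gentle opening'' piece $[\e^{-2},2\e^{-2}]$ using $R\leq 1/(2\e)$. Two small cosmetic differences: for $\lambda\in[L_1-R,L_1]$ the paper first reduces by periodicity to $\lambda\in[-R,0]$ rather than working directly near $K+L_1e_1$ as you do, and you explicitly check the vertical segment of $\partial K$ (where $\nu=e_1$), a case the paper's formula $\nu\propto(-h',1)$ does not formally cover even though its conclusion remains valid there.
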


\begin{proof}
	Observe preliminarily that $2R<3\e^{-2} - R$ because $R<\e^{-1}<\e^{-2}$ by \eqref{h prime}. 
	We have already seen
	at the beginning of Section \ref{Oriented invasion} that property
	\eqref{condition subsolution} is equivalent to have that $u_{R}(\cdot - z)$ (restricted to $\ol\O_3$)
	is a generalized subsolution of \eqref{evol homogeneous}. 	
	
Take $\lambda \in [2R,3\e^{-2} - R]$ and $x\in ( \partial \Omega_{3})\cap \ol B_{R}(\lambda e_{1})$. 
Then $1<R \leq x\cdot e_{1} \leq 3\e^{-2} $ and $x\.e_2=\pm h(x\.e_1)$.
By symmetry, we can restrict to the case $x\.e_2=h(x\.e_1)$.
For such values of $x$ we have that $h^{\prime}(x \cdot e_{1}) \leq \e^2$. 
Because the unit exterior normal at the point $x\in \partial \O_3$, $\nu(x)$, is positively collinear to  
$( -h^{\prime}(x\cdot e_1) , 1)$, there holds
\begin{equation*}
\begin{array}{ll}
\big\vert ( -h^{\prime}(x\cdot e_1) , 1) \big\vert (\lambda e_{1}-x)\cdot \nu(x) &= -h^{\prime}(x\cdot e_{1})(\lambda-x\cdot e_{1})-h(x\cdot e_{1}) \\
&\leq \e^2\vert x\cdot e_{1} - \lambda\vert  -\varepsilon \\
&\leq \e^2R -\varepsilon \\
&\leq 0,
\end{array}
\end{equation*}
where the last inequality comes from \eqref{h prime}. 

Let us check property \eqref{condition subsolution} with $z= \lambda e_{1}$ and 
$\lambda \in  [L_1-R,L_1]$. Thanks to the periodicity of $\O_3$, we reduce to the case $\lambda \in  [-R,0]$.
Take $x\in(\partial \O_3) \cap \ol B_R ( \lambda e_1)$. Hence $x\.e_1 \in  [-2R,R]$ and thus in particular
$x\.e_1>-\frac{L_1}{2}$ thanks to \eqref{h prime}. It follows that $x\.e_1\geq0$, whence $x\.e_1\geq\lambda$.
Hence, by the same computation as above, using the fact that $h$ is decreasing on $[0,R]$ we derive \eqref{condition subsolution}.
\end{proof}

 \begin{lemma}\label{sliding2}
 Let $z \in \Omega_{3}$ be such that $z\cdot e_{1} \in [3\e^{-2} - R , L_{1}-2R]$ and $d(z,\partial \Omega_{3}) \geq 1$,
 where $d(z,\partial\Omega_{3})$ denotes the distance between $z$ and $\partial\Omega_{3}$.
  Then property~\eqref{condition subsolution} holds 
 and therefore $u_{R}(\cdot - z)$  is a generalized subsolution
 to \eqref{evol homogeneous}.
 \end{lemma}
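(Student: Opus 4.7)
The plan is to exploit the exterior ball condition \eqref{h 1}: at every boundary point $x$ in the widened portion of $\partial\O_3$ one has a ball of the large radius $R^2$ sitting in $\O_3^c$ and tangent at $x$. Since the sliding ball $B_R(z)$ has the much smaller radius $R$ and since $z$ lies at distance at least $1$ from $\partial\O_3$, this exterior ball will force $(z-x)\.\nu(x)$ to be strictly negative, which is exactly \eqref{condition subsolution}. As noted at the start of Section \ref{Oriented invasion}, this is what makes $u_R(\.-z)$ (extended by $0$ and restricted to $\ol\O_3$) a generalized stationary subsolution.

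The first step is purely geometric: I must check that every $x\in(\partial\O_3)\cap\ol B_R(z)$ lies, up to a translation by $L_1\Z\times L_2\Z$, in the range $x\.e_1 \in [2\e^{-2},L_1/2]$ where \eqref{h 1} is available. From $z\.e_1 \in [3\e^{-2}-R,\,L_1-2R]$ and $|x-z|\le R$ one gets $x\.e_1 \in [3\e^{-2}-2R,\,L_1-R]$. Using \eqref{h prime} together with $\e\le 1$ yields $2R\le \e^{-2}$, so the lower endpoint is at least $2\e^{-2}$. For the upper endpoint, by construction $K\subset [0,L_1/2]\times[0,L_2)$, so $\partial\O_3$ contains no point with first coordinate in $(L_1/2,L_1)$; combining this with the $L_1$-periodicity in $e_1$ and the $L_2$-periodicity in $e_2$, every such $x$ lies on a translate of $\partial K$ with first coordinate in $[2\e^{-2},L_1/2]$. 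Applying \eqref{h 1} at (the corresponding pre-image of) $x$ produces a center $y\in\O_3^c$ with $\ol B_{R^2}(y)\cap\ol\O_3=\{x\}$, and hence $\ol B_{R^2}(y)\subset \O_3^c$ and $\nu(x)=(y-x)/R^2$.

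The second step is a straightforward distance computation. Since $\O_3$ is open, $d(z,\partial\O_3)=d(z,\O_3^c)$, and because $\ol B_{R^2}(y)\subset \O_3^c$ this gives $|z-y|\ge R^2+1$. Expanding
\[
|z-y|^2 = |z-x|^2 - 2(z-x)\.(y-x) + R^4 \ \ge\ (R^2+1)^2
\]
and using $|z-x|\le R$ yields $-2(z-x)\.(y-x)\ge 2R^2 + 1 - |z-x|^2 \ge R^2+1>0$, so
\[
(z-x)\.\nu(x) \ =\ \frac{(z-x)\.(y-x)}{R^2} \ <\ 0,
\]
which is exactly \eqref{condition subsolution}.

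The main obstacle is the geometric first step: one must ensure that every contact point $x$ really falls in the portion of the boundary where \eqref{h 1} has been arranged to hold. This is why the specific range $[3\e^{-2}-R,\,L_1-2R]$ for $z\.e_1$ was chosen --- the lower endpoint leaves the margin $R$ needed for the exterior ball condition to apply at $x$, while the upper endpoint keeps $x\.e_1<L_1$ so that $x$ cannot lie on the boundary of the hole in the next periodic cell. Once the geometry is under control, the quantitative conclusion is immediate because the exterior ball radius $R^2$ dominates the sliding ball radius $R$.
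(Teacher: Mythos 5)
Your proof is correct and follows essentially the same route as the paper's: confirm via \eqref{h prime} that every contact point $x\in(\partial\O_3)\cap\ol B_R(z)$ lies where the exterior ball condition \eqref{h 1} applies, then conclude $(z-x)\cdot\nu(x)<0$ from $|x-y|=R^2$, $|z-x|\le R$, $|z-y|\ge R^2+1$ by a law-of-cosines estimate. Your Step 1 merely makes explicit what the paper leaves implicit (that $\partial\O_3$ has no point with first coordinate in $(L_1/2,L_1)$, forcing $x\cdot e_1\le L_1/2$), and your Step 2 is an equivalent rearrangement of the paper's cosine-rule computation.
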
  
   
\begin{proof}
Take $z$ as in the statement of the lemma. Assume that there exists 
$x \in (\partial \O_3)\cap \ol B_{R}(z) $. We have that $L_1>x\cdot e_1 \geq 3\e^{-2}-2R\geq2\e^{-2}$ 
by \eqref{h prime} and therefore property \eqref{h 1} holds.
We see that $\vert x -y \vert = R^{2}$, $\vert x-z\vert < R$ and $\vert z -y\vert \geq R^{2} +1$. 
From this we get
\[\begin{split}
(z-x)\cdot \nu(x) &= \frac{(z-x)\cdot (y-x)}{\vert y-x \vert} \\
&= \frac{\vert z-x\vert^{2} + \vert y-x \vert^{2}-\vert z -y \vert^{2}}{2\vert y-x \vert} \\
&\leq  \frac{R^{2} + R^{4}-(R^{2}+1)^{2}}{2R^2}  \\
&< 0,
\end{split}\]
whence property~\eqref{condition subsolution}.
\end{proof}  
Now, for the sake of clarity, we state a lemma that gathers the previous two 
under a more useful form.
\begin{lemma}\label{slidable}
The set $\mathcal{S}$ defined by
\[\begin{split}
\mathcal{S} := &([2R, L_1]\times\{0\})\\
%\{ \lambda e_1 \ : \ \lambda \in [2R, L_1]\}
&\cup \{ z \in \O_3\cap\mc{C} \ : \ z\cdot e_1 \in [3\e^{-2} - R , L_1 - 2R] \ \text{ and } \ d(z , \partial \O_3) \geq 1   \}\\ & \cup 
\{  z\in (\ol\O_3\setminus B_{3R}(L_1 e_1))\cap\mc{C}
 \ : \ z \cdot e_1 \in [L_1 -2R , L_1]   \}
\end{split}\]
satisfies the following properties:
\begin{enumerate}[$(i)$]
\item The set $\mathcal{S}$ is a path-connected subset of $\O_3$.
\item  For $z\in  \mathcal{S}$, $u_{R}(\cdot - z)$ is a generalized stationary subsolution of \eqref{evol homogeneous}. 
\end{enumerate}
\end{lemma}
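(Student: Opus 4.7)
The plan is to verify the two claims by decomposing $\mc{S}$ into its three constituent pieces, handling the subsolution property on each through the two previous lemmas plus some elementary distance estimates, and then gluing the pieces along the horizontal axis together with a single vertical detour at $x_{1}=L_{1}-2R$.

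I would begin with statement $(ii)$. On the first piece $[2R,L_{1}]\times\{0\}$, I split $\lambda\in[2R,L_{1}]$ into four subintervals. The two outer ones, $[2R,3\e^{-2}-R]$ and $[L_{1}-R,L_{1}]$, are exactly the two cases of Lemma \ref{sliding1}. On the central subinterval $[3\e^{-2}-R,L_{1}-2R]$, the piecewise definition of $h$ together with $\kappa\geq 4R$ gives $d((\lambda,0),\partial\O_{3})\geq 1$: for $\lambda\leq L_{1}/2$ one has $h(\lambda)\geq 1$ directly, while for $\lambda\geq L_{1}/2$ the nearest boundary features are the tip $(L_{1}/2,L_{2}/2)$ and the next-cell hole at $x_{1}=L_{1}$, both at distance $\geq\min(L_{2}/2,2R)\geq 1$. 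This puts one in the scope of Lemma \ref{sliding2}. The remaining ``gap" interval $[L_{1}-2R,L_{1}-R]$ is covered by neither lemma, so I would use $L_{1}$-periodicity to reduce to $\tilde\lambda\in[-2R,-R]$ and observe that the nearest possible boundary points are the corners $(0,\pm 2\e)$, at distance $\sqrt{\tilde\lambda^{2}+4\e^{2}}>R$ (using $R\leq 1/(2\e)$); hence $\ol B_{R}(\tilde\lambda e_{1})\cap\partial\O_{3}=\emptyset$ and \eqref{condition subsolution} holds vacuously.

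The second piece of $\mc{S}$ falls directly under Lemma \ref{sliding2}. For the third piece, $L_{1}$-periodicity moves $z$ to $\tilde{z}:=z-L_{1}e_{1}$ with $\tilde z\cdot e_{1}\in[-2R,0]$ and $|\tilde z|\geq 3R$, the latter forcing $|\tilde z\cdot e_{2}|\geq\sqrt{5}\,R$. The crux is to show that within $\ol B_{R}(\tilde z)$ the only reachable portion of $\partial\O_{3}$ lies on the vertical left edge of the relevant periodic copy of the hole, where the outward normal is exactly $e_{1}$, so that $(\tilde z-x)\cdot\nu(x)=\tilde z\cdot e_{1}\leq 0$. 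To rule out the curved parts and the tip I would invoke the bounds $\e\leq 1/(2R)$ and $\kappa\geq 4R$: the lower curve $(s,h(s))$, lying at height $\leq 2\e$, stands at distance $\geq\sqrt{5}\,R-2\e>R$ from $\tilde z$; the upper curve, the tip, and the non-adjacent translates of the hole are all kept at distance $>R$ by $L_{2}/2-2\e=1+\kappa-2\e>R$ and $L_{1}/2\geq 4R$.

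For statement $(i)$, Set A is a segment, and the three pieces glue together through $\{x_{2}=0\}$: the subset $[3\e^{-2}-R,L_{1}-2R]\times\{0\}$ belongs to Set B by the estimate just used. To reach the two components of Set C, separated by the constraint $|z\cdot e_{2}|\geq\sqrt{5}\,R$, I would note that the entire vertical slice $\{L_{1}-2R\}\times[-L_{2}/2,L_{2}/2]$ lies in Set B, since the closest boundary features (the tip and the next-cell hole) are at distance $\geq 2R\geq 2>1$, and that this slice meets each component of Set C at height $\pm\sqrt{5}\,R$, which lies in $[-L_{2}/2,L_{2}/2]$ because $\sqrt{5}\,R\leq 4R\leq\kappa<L_{2}/2$. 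The main obstacle is the geometric bookkeeping for the third piece in $(ii)$: one must exclude every possible ball-boundary contact except the vertical left edge, which relies essentially on combining the size condition $|\tilde z|\geq 3R$ with the parameter constraints of \eqref{h prime}.
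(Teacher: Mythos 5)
Your proposal is correct and follows the paper's own (very terse) strategy: decompose $\mathcal{S}$ into its three constituent sets, apply Lemma \ref{sliding1} and Lemma \ref{sliding2} where they reach, and for the third set argue that $(\partial\O_3)\cap\overline{B}_R(z)$ lies on the vertical segment $\{x\cdot e_1=L_1\}$ where the exterior normal is $e_1$, so that $(z-x)\cdot\nu(x)=z\cdot e_1-L_1\leq 0$. You in fact supply a step that the paper's one-line proof glosses over, namely that the sub-interval $\lambda\in(L_1-2R,L_1-R)$ of the segment $[2R,L_1]\times\{0\}$ falls under neither Lemma \ref{sliding1} (whose second range is only $[L_1-R,L_1]$), nor Lemma \ref{sliding2}, nor the third set of $\mathcal{S}$ (which requires $|z-L_1e_1|\geq 3R$); your observation that $\overline{B}_R(\lambda e_1)$ then misses $\partial\O_3$ altogether, using $R\leq 1/(2\e)$ and $L_1/2>3R$, closes this gap, and your connectivity argument for $(i)$ via the vertical slice $\{L_1-2R\}\times[-L_2/2,L_2/2]$ is a correct way to make precise what the paper leaves to Figure \ref{slidable set}.
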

The set $\mathcal{S}$, depicted in Figure \ref{slidable set},
 is a ``slidable" region in $\O_3\cap \mathcal{C}$ for the centers of the subsolutions $u_{R}(\cdot - z)$. 
 Property $(ii)$ follows from 
 Lemmas~\ref{sliding1}-\ref{sliding2}, except 
when~$z$ belongs to the third set in the definition of $\mathcal{S}$.
But in such case condition~\eqref{condition subsolution} 
is readily derived by noticing that, thanks to \eqref{h prime},
the set $(\partial\O)\cap \ol B_{R}(z)$ is contained in the vertical line $\{x\.e_1=L_1\}$.
\begin{figure}[H]
	\begin{center}
		\includegraphics[scale = 0.6]{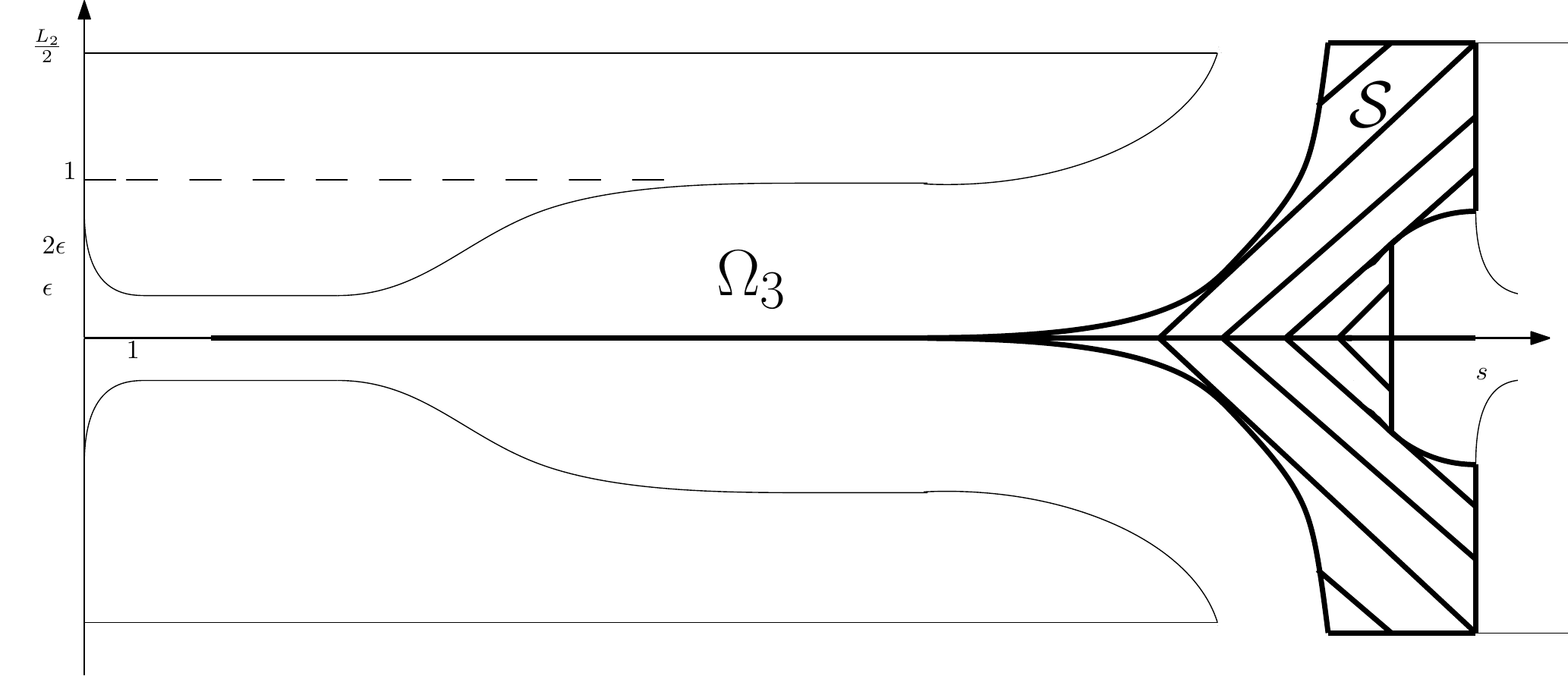}
		\caption{The ``slidable" region $\mathcal{S}$}
		\label{slidable set}
	\end{center}
	\vspace{-10pt}
\end{figure}  

The last auxiliary result, which will allow us to ``jump'' to the right of the narrow passage,
concerns the solution of the following Cauchy problem in $B_{R'}$, with $R^{\prime}>R$:
\begin{equation}\label{bornesup}
\left\{
\begin{array}{rll}
\partial_{t}v-\Delta v& = f(v), \quad &t >0 ,\ x\in B_{R^{\prime}} \\
v &= 0 , \quad &t>0 , \ x \in \partial B_{R^{\prime}} \\
v(0,x) &= u_{R}(x) , \quad  &x\in B_{R^{\prime}} .
\end{array}
\right.
\end{equation}
\begin{lemma}\label{lem:vR}
	Let $v_{R^{\prime}}$ be the solution of \eqref{bornesup} with $R'>R$.
	Then, for any $t> 0$, the function $v_{R^{\prime}}(t,\cdot)$ is radially symmetric and decreasing.
	Moreover, it satisfies
	%there is $T>0$ such that
	$$
	\forall x \in B_{R^{\prime}-R},\quad
	\lim_{t \to +\infty}v_{R^{\prime}}(t,x) > M^{\prime}
	:= u_{R}(0).$$
\end{lemma}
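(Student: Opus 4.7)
The first assertion is a consequence of the invariance of the problem under orthogonal transformations fixing the origin, together with a standard moving-planes argument. For radial symmetry, the rotation invariance of the Laplacian, of $B_{R'}$ and of the Dirichlet boundary condition, combined with the radial character of $u_R$, imply via uniqueness of the Cauchy problem that $v_{R'}(t,Ox)=v_{R'}(t,x)$ for every orthogonal $O$. For the radial monotonicity, fix a unit vector $e$ and $\lambda\geq 0$, and consider the reflection $x^\lambda:=x-2(x\cdot e-\lambda)e$ on the half-ball $H_\lambda:=\{x\in B_{R'}:x\cdot e>\lambda\}$. The difference $w_\lambda(t,x):=v_{R'}(t,x^\lambda)-v_{R'}(t,x)$ satisfies a linear parabolic equation in $(0,+\infty)\times H_\lambda$, is nonnegative at $t=0$ (since $|x^\lambda|\leq |x|$ and $u_R$ is radially decreasing) and on $\partial H_\lambda$, and hence remains nonnegative in time by the parabolic maximum principle. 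Choosing $\lambda=(r_1+r_2)/2$ with $0\leq r_1<r_2\leq R'$ and $e$ suitably directed yields the radial monotonicity.

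For the second assertion, I would first note that $u_R$, extended by $0$ in $B_{R'}\setminus\overline{B_R}$, is a generalised stationary subsolution of \eqref{bornesup}. Consequently $v_{R'}(t,\cdot)$ is non-decreasing in $t$ and, by parabolic estimates, converges as $t\to+\infty$ to a classical stationary solution $V$ of $-\Delta V=f(V)$ in $B_{R'}$ with $V=0$ on $\partial B_{R'}$ and $V\geq u_R$. The elliptic strong maximum principle then forces $V>0$ in $B_{R'}$, and the first assertion applied at the limit gives that $V$ is radial and non-increasing in $|x|$.

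The core of the argument is a sliding step showing that
\[
V(x)\geq u_R(x-z_0)\qquad\text{for every }x\in B_{R'}\text{ and every }z_0\text{ with }|z_0|\leq R'-R.
\]
For such $z_0$, the translated function $u_R(\cdot-z_0)$ (extended by $0$) is itself a generalised stationary subsolution on $B_{R'}$. Setting
\[
s^\star:=\sup\bigl\{s\in[0,1]\ :\ V\geq u_R(\cdot-\sigma z_0)\text{ in }B_{R'}\text{ for every }\sigma\in[0,s]\bigr\},
\]
one has $s^\star>0$ from $V\geq u_R$. If $s^\star<1$, continuity yields a point $\bar x$ where $V$ and $u_R(\cdot-s^\star z_0)$ coincide; since $|s^\star z_0|+R<R'$, the closed ball $\overline{B_R(s^\star z_0)}$ lies inside $B_{R'}$, and $V>0$ on $\partial B_R(s^\star z_0)$ while $u_R(\cdot-s^\star z_0)$ vanishes there, so $\bar x$ must lie in the interior of $B_R(s^\star z_0)$. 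Linearising $f(V)-f(u_R(\cdot-s^\star z_0))$ and applying the elliptic strong maximum principle to $V-u_R(\cdot-s^\star z_0)\geq 0$ forces the two functions to coincide on $B_R(s^\star z_0)$, contradicting their different boundary values. Hence $s^\star=1$.

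Evaluating the resulting inequality at $x=z_0$ gives $V(z_0)\geq u_R(0)=M'$ for every $z_0\in\overline{B_{R'-R}}$. The strict inequality $V(z_0)>M'$ for $z_0\in B_{R'-R}$ then follows from exactly the same touching-point argument, applied at the interior point $z_0$ of $B_R(z_0)\subset B_{R'}$, which is valid precisely because $|z_0|<R'-R$. The main obstacle is the sliding step itself: one must be careful that the contact point always lies strictly inside $B_R(s^\star z_0)$, so that the strong maximum principle can be invoked; this is ensured by the strict containment $\overline{B_R(s^\star z_0)}\subset B_{R'}$ whenever $s^\star<1$, and makes essential use of the hypothesis $R'>R$.
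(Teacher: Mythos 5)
Your proof is correct and follows essentially the same route as the paper: radial symmetry by uniqueness under rotations, radial monotonicity by the moving-plane technique, and the lower bound by sliding translates $u_R(\cdot-z_0)$ under the increasing limit profile $V$. The only superficial difference is that the paper slides along a half-open ray $[0,R'-R)e$ and obtains the strict inequality $v_\infty>u_R^s$ at every step, whereas you first derive the non-strict bound $V(z_0)\ge M'$ on the closed ball $|z_0|\le R'-R$ and then upgrade to strictness on the open ball by a separate touching-point application of the strong maximum principle; both variants are fine.
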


\begin{proof}	
	The symmetry property is immediately inherited from the initial datum $u_R$ due to the 
	uniqueness of solutions for the parabolic problem \eqref{bornesup}.
	We show that the same is true for the radial monotonicity using a standard moving plane technique.
	Let $H$  be a straight line in $\mathbb{R}^{2}$ intersecting $B_{R^{\prime}}(0)$ which does not contain
	the origin and let $\Sigma$ denote the orthogonal symmetry
	with respect to $H$. We define 
	$$
	w_{R^{\prime}}(t,x) := v_{R^{\prime}}(t,\Sigma(x)), \quad \text{ for } \ t > 0, \ x \in \Sigma(B_{R^{\prime}}).
	$$
	Then, $w_{R^{\prime}}$ is a solution of \eqref{bornesup} set on $\Sigma(B_{R^{\prime}})$ arising from the initial datum $u_{R^{\prime}} (\Sigma(\cdot))$. Now, consider the domain
	$U$ given by the intersection between $\Sigma( B_{R^{\prime}})$ and the half-plane
	bounded by $H$ and containing the origin. Observe that $U\subset B_{R'}$.
	On the one hand,   $u_{R^{\prime}}$ and $v_{R^{\prime}}$
	coincide on $H\cap\ol B_{R'}$ for all $t$, because $\Sigma$ is the identity there.
	On the other hand, $u_{R^{\prime}}>v_{R^{\prime}}=0$ on
	$\partial U\setminus H$.
	Moreover, for $x \in U$ there holds $d(x,0)<d(x,\Sigma(0))$. Because $ u_{R}$ is radially decreasing, it follows 
	that $u_{R^{\prime}} \geq u_{R^{\prime}} (\Sigma(\cdot))$ in $U$. The parabolic comparison principle then yields
	$v_{R^{\prime}}(t,x) \geq w_{R^{\prime}}(t,x)$ for all $t>0$, $x\in U$.
	This being true for every line $H$, 
	we deduce that $v_{R^{\prime}}(t,\cdot)$ is radially decreasing.

	We derive the last statement of the lemma using the sliding method.
	Because the initial datum $u_{R}$ is a generalized stationary subsolution for the parabolic problem \eqref{bornesup}, 
	we have that $v_{R^{\prime}}(t,x)$ is increasing with respect to $t$ and converges 
	to a stationary solution $v_{\infty}>u_R$ of \eqref{bornesup} as $t$ goes to $+\infty$. Fix a direction
	$e\in \Sph$. 
	For $s \in [0,R'-R)$, define $
	u^s_R := u_{R}(\cdot -s e)$. We claim that
	$u^{s}_R < v_{\infty}$ in $B_R'$ for all $s\in[0,R'-R)$.
	Assume by contradiction that this is not the case, then call
	$s^{\star}$ the infimum of the $s\in[0,R'-R)$ for which $u^{s}_R < v_{\infty}$ fails.
	We deduce that there is a contact point between $v_{\infty}$ and $u_R^{s^\star}$, i.e.,
	$\min_{B_{R'}}(v_{\infty}-u_R^{s^\star})=0$.
	The  elliptic strong maximum principle then yields $v_{\infty} \equiv u_R^{s^\star}$, which is  
	impossible because $u_R^{s^\star}$ is compactly supported in $B_{R^{\prime}}$. 
	This proves the claim. We infer in particular that $v_{\infty}>M':=u_R(0)$ in the segment
	connecting $0$ (included) to $(R'-R)e$ (excluded). The desired property then follows from the arbitrariness of 
	$e\in \Sph$.
	%$$
	%v_{\infty}> u_{R}(0) = M^{\prime}\quad  \text{ in }\  B_{R^{\prime} - R}.
	%$$
	%%
	%Moreover, this yields that $v_{\infty} \geq \ul u_{1}$. The strong elliptic maximum implies that the only possible contact points between $v_\infty$ and $\ul u_1$ are on the boundary $\partial B_{R^{\prime}}$, hence
	%$$
	%v_{\infty}>  M^{\prime}\quad  \text{ in }\ \ol B_{R^{\prime} - R},
	%$$
	%and the result is proved.
	%%
	%%Now, because $v_{R^{\prime}}(t,x)$ converges locally uniformly to $v_{\infty}$ as $t$ goes to $+\infty$, there is 
	%$T>0$ such that $v_{R^{\prime}}(t,x) > M^{\prime}$ for $x \in B_{R^{\prime}-R}$ and $t\geq T$.
\end{proof}

These lemmas at hand, we can turn to the proof of 
Proposition~\ref{prop oriented invasion}.

\begin{proof}[Proof of Proposition \ref{prop oriented invasion}.] The proof is divided into three steps. 
	Let $u$ denote the solution of \eqref{evol homogeneous} set on the domain $\Omega_{3}$ given by \eqref{definition domain} emerging from the initial datum $u_{R}(\.-2Re_1)$. Because $2Re_1$ belongs to the set $\mc{S}$ of Lemma \ref{slidable}, this initial datum 
	is a generalized subsolution to \eqref{evol homogeneous}.
	As a consequence, $u(t,x)$ is increasing with respect to $t$ and that it converges as $t\to+\infty$ 
	locally uniformly in $x\in \ol\O_{3}$ to a stationary solution of \eqref{evol homogeneous}, that we call
	$u_{\infty}$. We claim that $u_\infty$ fulfills the following properties:
\begin{equation}\label{a montrer 1}
u_{\infty} > M \quad \text{ in }\  \ol\Omega_{3}\cap \mathcal{C},
\end{equation}
\begin{equation}\label{a montrer 2}
\forall z\in\{0,L_{1}e_{1},L_{2}e_{2},-L_{2}e_{2}\},\quad
u_{\infty}(\cdot+z) > u_{R}(\cdot-2Re_1).
\end{equation}

\medskip
\emph{Step 1. Estimate in the ``slidable'' region $\mc{S}$.} \\
%%% On passe l'ouverture
We start with showing that
\begin{equation}\label{S+1}
u_{\infty}> M\quad \text{ in  }\  (\mc{S}+\ol B_1)\cap\ol\O_3,
\end{equation}
where $M$ is given in \eqref{inner estimate}. 
Consider an arbitrary $z$ in the set $\mc{S}$ defined in Lemma~\ref{slidable}.
Because $\mathcal{S}$ is 
path-connected, there exists a continuous path $\gamma: [0,1] \to \mathcal{S}$ such that 
$\gamma(0) =2R e_1$, $\gamma(1) = z$.
Let $(u^s_R)_{s\in[0,1]}$ be the continuous family of functions defined~by
$$
u^s_R := u_{R}(\cdot - \gamma(s)).
$$
We know from Lemma~\ref{slidable} that all these functions are generalized subsolutions 
to~\eqref{evol homogeneous}. Furthermore, $ u^0_R$ coincides with the initial datum of $u$, whence
$  u^0_R< u_\infty$. 
Then, the same sliding method as in the proof of Lemma~\ref{lem:vR} shows that
$ u^s_R < u_\infty$ for all $s\in[0,1]$, and thus in particular 
\Fi{u>uR}
\forall z\in\mc{S},\quad
u_{\infty}>u_R(\.-z).
\Ff
We eventually deduce from \eqref{inner estimate} that $\min_{\ol B_1(z)} u_{\infty}>\min_{\ol B_1} u_R= M$.
Because $z\in\mc{S}$ was arbitrary, \eqref{S+1} follows.

%$u_\infty>M$ in $\mc{S}+\ol B_1$. Moreover, by step~1, the inequality is also true in 
%$\ol\O_3\cap([0,2R]\times[-1,1])\subset \ol\O_3\cap B_{3R}$.
%In order to have~\eqref{step2}, it remains to fill (a subset of) $\ol\O_3\cap B_{3R}(L_1e_1)$.
%To do this, we remark that $u_{\infty}>u_R(\.-L_1e_1)$, that is,
%$u_{\infty}(\.+L_1e_1)>u_0$. Since by periodicity $u_{\infty}(\.+L_1e_1)$ is a stationary 
%solution to \eqref{evol homogeneous}, we infer from step 1 that $u_{\infty}(\.+L_1e_1)>M'>M$
%on $\O\cap B_{3R}$. This concludes the proof of \eqref{step2}.

\medskip

\emph{Step 2. ``Jumping above" the narrow passage.} \\
%%% On passe le passage étroit
In this step, we show that
\begin{equation}\label{B3R}
u_{\infty}> M^{\prime} \quad \text{ in }\ \O_3\cap B_{3R}(L_1e_1),
\end{equation}
where $M'$ is given by Lemma~\ref{lem:vR}.
By \eqref{u>uR} we know that $u_\infty>u_R(\.-L_1e_1)$.
Consider the function $v_{4R}$ provided by Lemma~\ref{lem:vR} with $R^{\prime} = 4R$. We extend it by $0$ outside
$B_{4R}$ and consider its restriction to $\O_3$. Let us show that $v_{4R}$ is a generalized subsolution for 
\eqref{evol homogeneous}. This property is trivial for the first equation.
For the second one, being $v_{4R}(t,\cdot)$ radially symmetric and decreasing thanks to Lemma~\ref{lem:vR},
we know that we need to check that condition \eqref{condition subsolution} holds with $z=0$ and $R$ replaced by~$4R$. 
This is readily achieved by noticing that $(\ol B_{4R}\cap\partial\O)\subset([0,4R]\times[-\frac{L_2}2,\frac{L_2}2])$ thanks to \eqref{h prime}, and then using the fact that $h$ is non-increasing on~$[0,4R]$. 
By periodicity, the function $v_{4R}(\.,\.-L_1e_1)$ is a subsolution to \eqref{evol homogeneous}
too. In addition, it is equal to $u_R(\.-L_1e_1)<u_\infty$ at time $t=0$.
It then follows from the comparison principle that $u_{\infty} > v_{4R}(t,\cdot-L_1e_1)$ for all $t>0$. 
Property~\eqref{B3R} eventually follows from the last statement of Lemma~\ref{lem:vR}.

\medskip

\emph{Step 3. Conclusion} \\
%%% On remplit jusqu'à $M^{\prime}$ le reste de la cellule.
%\begin{equation}
%u_{\infty}> M\quad \text{ in  }\  \ol\Omega_{3}\cap\left([2R,L_1]\times\left[-\frac{L_2}2,\frac{L_2}2\right]\right),
%\end{equation}
Gathering together \eqref{S+1} and \eqref{B3R} we obtain \eqref{a montrer 1}. 
Next, observe that the set
$$
\mathcal{S}\cup (\mathcal{S} +\{L_2 e_2\}) \cup (\mathcal{S} - \{L_2 e_2\}) 
$$
is path-connected and contains the points 
$$2Re_1,\qquad 
2Re_1+L_2 e_2 ,\qquad 2Re_1-L_2 e_2.$$
We can then argue as in the  step 1 to derive \eqref{u>uR} at those points $z$. The same conclusion holds
with $z=2Re_1+L_1 e_1$ by \eqref{B3R}.
This proves \eqref{a montrer 2}.

Now, because the convergence of $u(t,x)$ to $u_{\infty}$ is locally uniform in $x\in\ol\O_3$ as $t$ goes to $+\infty$, 
properties \eqref{a montrer 1}-\eqref{a montrer 2} hold true with $u_\infty$ replaced by $u(t,\.)$ for every 
$t$ larger than some $T$.
Owing to the comparison principle, it follows from the latter that 
$$\forall a\in\{0,1\},\ b\in\{-1,0,1\}\ \text{ such that }\
a+|b|\leq 1,\quad
u(\.+T,\cdot+aL_{1}e_{1}+bL_{2}e_{2}) > u.$$
Then, by iteration, for all $n\in\N$ there holds
$$\forall a\in\N\cup\{0\},\ b\in\Z\ \text{ such that }\
a+|b|\leq n,\quad
u(\.+nT,\cdot+aL_{1}e_{1}+bL_{2}e_{2}) > u.$$
The proof is thereby achieved because $u(t,x)>M$ for $t\geq T$, $x\in\ol\O_3\cap\mc{C}$.
\end{proof}

\subsubsection{Proof of Theorem \ref{th oriented}}\label{proof oriented invasion}

We first show the invasion property in the direction $e_{1}$, and then the blocking in the direction $-e_{1}$.

\begin{proof}[Proof of Theorem \ref{th oriented}.]
Let $\O_3$, $R$, $L_1$ and $L_2$ be as in the previous subsections.

\medskip
\emph{Statement $(i)$.} \\
Consider the solution  $u$ of Proposition \ref{prop oriented invasion}.
Let us show that Theorem~\ref{th oriented}$(i)$ holds for~$u$.
 First, because $u_R<1$, the parabolic comparison principle yields $u \leq 1$.
%\begin{equation}\label{u 1}
% \limsup_{t\to+\infty}u(t,x)\leq1.
% \end{equation}
%Indeed, $f(s)$ is strictly negative if $s>1$. Then, denoting $v$ is the solution of 
%\begin{equation*}
%\left\{
%\begin{array}{llc}
%\dot{v}(t) &= f(v), \quad t > 0, \\
%v(0) &= \max u_{R},
%\end{array}
%\right.
%\end{equation*}
%we see that $v(t) \to 1$ as $t$ goes to $+\infty$. Using $v(t)$ as a supersolution for \eqref{evol homogeneous}, the parabolic comparison principle %yields that
%$$
%\forall t\geq 0, \ \forall x\in \Omega_{3} , \quad   u(t,x) \leq v(t),
%$$	
%and \eqref{u 1} follows by taking the limit $t\to +\infty$.
Now, call $c:=L_1/T$, with $T$ given by Proposition \ref{prop oriented invasion}. Take $c_1,c_2$ such that
	$0<c_1<c_2<c$. Consider a diverging sequence $(t_n)_{n\in \mathbb{N}}$ and
	a sequence $(x_n)_{n\in \mathbb{N}}$ in $\O_3$ such that
	$$\forall n\in\N,\quad c_1 t_n\leq x_n \cdot e_{1}\leq c_2 t_n,
	\qquad (x_{n}\cdot e_{2})_{n\in\N}\text{ is bounded}.$$
	Consider then the sequence $(k_n)_{n\in \mathbb{N}}$ in $\Z$ for which
	$0\leq x_n \cdot e_{1}-k_nL_1<L_1$.
	This sequence diverges to $+\infty$ because $(x_n\.e_1)_{n\in \mathbb{N}}$ does. 
	We define the translated functions	
$$
u^{n} := u(\cdot+t_n,\cdot+k_n e_{1} L_1).
$$	
	The sequence $(u^{n})_{n\in \mathbb{N}}$ converges as $n$ goes to $+\infty$ (up to extraction)
	locally uniformly in $t\in\R$, $x\in\ol\O_3$,
	to a function
	$ u^{\infty}$ which is an entire solution of \eqref{evol homogeneous}.
%%%	
We claim~that
\begin{equation}\label{u infty}
\forall t\in \mathbb{R}, \ \forall x \in \O_3 , \quad  u^{\infty}(t,x) \geq M,
\end{equation}
where $M$ is given by \eqref{inner estimate}. Fix $t\in \mathbb{R}$ and $x\in \O_3$.
Set $m_{n} := \left\lfloor{\frac{t+t_{n}}{T}}\right\rfloor$-1, where $\lfloor \cdot \rfloor$ stands for the integer part. 
We compute
$$k_n\leq \frac{x_n\.e_1}{L_1}\leq \frac{c_2}{L_1} t_n<
\frac{c_2}{L_1}
(m_n+2)T-\frac{c_2}{L_1}t=
(m_n+2)\frac{c_2}{c}-\frac{c_2}{L_1}t.
$$
Because $c_2<c$, we find that $m_{n} - k_{n} \to +\infty$ as $n$ goes to $+\infty$.
Consequently, for $n$ large enough, there holds
$$ t+t_{n} \geq (m_{n}+1)T \ \text{ and }\  x+ k_n L_1 e_1 \in  
\bigcup_{\substack{a\in\N\cup\{0\},\ b\in\Z  \\ 
		a+|b|\leq m_n}} \left( \mathcal{C}+\{aL_{1}e_{1}+bL_{2}e_{2}\} \right),$$
and therefore $u^n(t,x)>M$ thanks to Proposition \ref{prop oriented invasion}.
This proves \eqref{u infty}. Recall that $M>\theta$. Then Lemma \ref{ODE}
yields $u^\infty\equiv1$. This means that property $(i)$ of Theorem~\ref{th oriented}
holds for the solution $u$.
%	We deduce that $u_{\infty}(t,x)\geq V(t+\tau)$ for all $\tau\in\R$, $t\geq-\tau$,
%	$x \in\O_3$, where $V$ 
%	is the solution of
%	\[\begin{cases}
%	\dot{V}=f(V), & t>0\\
%	V(0)=M.
%	\end{cases}\]
%	We have that $V(+\infty)=1$ because $f>0$ in $(M,1)$ due to hypothesis \eqref{} (we recall that $M>\theta$).
%	Therefore, letting $\tau$ go to $+\infty$, we derive $u_{\infty}\geq1$.
%	We have thereby shown that (up to subsequences)
%	$$\liminf_{n\to+\infty}u(t_n,x_n)= \liminf_{n\to+\infty} u_{n}(0,x_n-m_n L e_{1})\geq1.$$
%Because $u\leq 1$, this yields \eqref{cv special}.	
As usual, one then extend the result to the class of initial data stated in the theorem by 
arguing as in the proof of Theorem~\ref{th persistence}.

\medskip
\emph{Statement $(ii)$.} \\ 
We shall make use of the blocking property for cylinders. Consider the cylinder
$$\tilde{\Omega}_{3} :=  \O_3\cap(\R\times (0 , L_{2})).$$
Because of the choice of $\varepsilon$ in the construction of $\Omega_{3}$, we can apply Proposition \ref{prop bbc} to this cylinder with $b = -1$ and get a positive solution $w$ to \eqref{eq block L}.
We first extend $w$ to the whole $\t\O_3$ by setting $w(x) = 1$ for $x\cdot e_{1} \geq 1$. Next, we extend it by periodicity 
in the direction $e_2$, with period $L_2$. Because $\nabla w(x)\.e_2=0$ for $x\.e_2\in L_2\Z$,
we have that $w$ is a generalized supersolution to~\eqref{evol homogeneous}.
Consider now a compactly supported initial datum $u_{0}\leq1$.
We can find $k\in \mathbb{Z}$ such that $u_{0} \leq w(\cdot + k L_1 e_{1} )$. 
The parabolic comparison principle then yields $u(t,x)\leq w(x+2L_1k e_1)$, for all $t>0, \ x \in \Omega_{3}$. 
Statement $(ii)$ eventually follows from the last property of~\eqref{eq block L}.		
\end{proof}

% The idea is to show that there is a stationary solution $p(x)$ of \eqref{evol periodic} such that 
%$$
%p(x) \underset{x\cdot e_{1} \to -\infty}{\longrightarrow} 0 \quad \text{and } p(x) = 1 \text{ if } x\cdot e_{1}>L.
%$$
%We build $p(x)$ to be $2L$-periodic in the direction $e_{2}$. To do so, we define the cylinder 

%\section{Appendix}

%\subsection{Spreading is symmetric in the KPP case}
%$$
%\begin{cases}
%\partial_t u=\nabla\cdot (A(x) \nabla u)+f(x,u), & t>0,\ x\in\O\\
%\nu \cdot A(x)\nabla u=0, & t>0,\ x\in\partial\O.
%\end{cases}
%$$

%$$
%L_{\lambda}\psi := \nabla\cdot (A(x) \nabla \psi) -2\lambda e\cdot \nabla \psi +(-\lambda \nabla\cdot (Ae)+\lambda^{2}eAe + \partial_{s}f(x,0))\psi 
%$$
%acting on the set $E = \left\{  \psi \in C^{2}(\Omega_{3}), \ \psi \text{ is $L-$periodic with respect to $x$ and } \nu A \nabla \psi = \lambda(\nu A e)\psi \text{ on } \partial \Omega_{3} \right\} $

%Then, let $\psi$ be the eigenfunction associated to the eigenvalue $k(\lambda,e)$ and $\phi$ be the one associated to the eigenvalue $k(\lambda,-e)=k(-\lambda,e)$. An easy computation shows that
%$$
%\int_{\Omega_{3}}L_{\lambda}\psi \phi = \int_{\Omega_{3}}L_{-\lambda}\phi \psi, 
%$$
%hence, because $\phi,\psi>0$, we have
%$$
%k(\lambda,e)=k(\lambda,-e),
%$$
%then
%$$
%c^{\star}(e) = c^{\star}(-e).
%$$

\end{document}